\newcommand{\bbN}{{\mathbb{N}}}
\newcommand{\bbR}{{\mathbb{R}}}
\newcommand{\bbE}{{\mathbb{E}}}
\newcommand{\bbL}{{\mathbb{L}}}
\newcommand{\bbC}{{\mathbb{C}}}
\newcommand{\dmu}{{\rm d} \mu}
\newcommand{\no}{\nonumber}
\newcommand{\wti}{\widetilde  }
\newcommand{\supp}{\text{\rm{supp}}}
\newcommand{\eps}{\varepsilon}
\newcommand{\beq}{\begin{equation}}
\newcommand{\eeq}{\end{equation}}
\newcommand{\ba}{\begin{align}}
\newcommand{\ea}{\end{align}}
\DeclareMathOperator{\Tr}{Tr}
\DeclareMathOperator*{\EE}{{\mathbb{E}}}
\DeclareMathOperator*{\Var}{Var}
\numberwithin{equation}{section}
\newtheorem{theorem}{Theorem}[section]
\newtheorem{proposition}[theorem]{Proposition}
\newtheorem{lemma}[theorem]{Lemma}
\newtheorem{corollary}[theorem]{Corollary}
\theoremstyle{definition}
\newtheorem{definition}[theorem]{Definition}
\theoremstyle{remark}
\newtheorem{remark}{Remark}[section]
\date{}
\begin{document}
\sloppy
\title{The Nevai condition and a local law of large numbers for orthogonal polynomial ensembles}
\author{Jonathan Breuer\footnote{ Einstein Institute of Mathematics, The Hebrew University of Jerusalem,
Jerusalem 91904, Israel. E-mail: jbreuer@math.huji.ac.il. Supported in part by the US-Israel Binational Science Foundation (BSF) Grant no.\ 2010348 and by the Israel Science Foundation (ISF) Grant no.\ 1105/10.} \and Maurice Duits\footnote{Department of Mathematics, Royal Institute of Technology (KTH), Lindstedtsv\"agen 25, SE-10044 Stockholm, Sweden. Supported by the grant KAW 2010.0063 from the Knut and Alice Wallenberg Foundation.}}
\maketitle
\begin{abstract}
We consider asymptotics of orthogonal polynomial ensembles, in the macroscopic and mesoscopic scales. We prove both global and local laws of large numbers (analogous to the recently proven local semicircle law for Wigner matrices) under fairly weak conditions on the underlying measure $\mu$. Our main tools are a general concentration inequality for determinantal point processes with a kernel that is a self-adjoint projection, and a strengthening of the Nevai condition from the theory of orthogonal polynomials. 
\end{abstract}

\section{Introduction}

Let $\mu$ be a probability measure on $\bbR$ with finite moments. The Orthogonal Polynomial Ensemble (=OPE) of size $n\in \bbN$ associated with $\mu$ is the probability measure on $\bbR^n$ given by 
\beq \label{eq:defOPE}
\frac{1}{Z_n} \prod_{i>j} (\lambda_i-\lambda_j)^2 {\rm d} \mu(\lambda_1) \cdots {\rm d}\mu(\lambda_n),
\eeq
where $Z_n$ is a normalizing constant. In recent years many models from probability theory and combinatorics have been shown to give rise to OPE's. These include non-colliding random walks, growth models, last passage percolation and eigenvalues of certain invariant random matrix ensembles. Perhaps the most famous example is that of the Gaussian Unitary Ensemble, where $\dmu_n(x)=\sqrt{\frac{n}{2\pi}}e^{-\frac{n}{2}x^2}{\rm d}x$ (here $\mu$ is $n$-dependent). For a review of various important models leading to OPE's see, e.g., \cite{K}.

A considerable amount of attention has been devoted to understanding the asymptotics of OPE's (as $n \rightarrow \infty$) in the microscopic scale, namely inside an interval whose size is of the order of the mean distance between points.
 In particular, the phenomenon of \emph{universality}, where the microscopic correlations are independent of specific properties of $\mu_n$, has been shown to hold for increasingly large classes of measures. In this context, around points in the `bulk',  this means that the correlation kernel converges to the sine kernel. We do not attempt a review of this topic here and merely cite a (small) portion of relevant works \cite{DKMVZ1,DKMVZ2, findley, LeLu, lubinsky1, lubinskyUniv2, Pas97, Pas08, Simon-ext, totik}. In these works universality was proven for points in the interior of the support of the absolutely continuous part of $\mu$. Recently, universality was shown to occur also for some measures supported on Cantor sets \cite{als} and even for certain singular continuous measures \cite{Bjat}. Universality holds in other contexts as well. Of relevance here is the recently proven universality for the eigenvalue correlations for Wigner random matrices (see \cite{EY} for a review).

This paper focuses on asymptotic properties of the (random) empirical measure $n^{-1} \sum \delta_{x_j}$ for OPE's on the macro- and mesoscopic scales. Roughly speaking, on the macroscopic scale we consider intervals that contain a number of points that is of order $\sim n$, while on the microscopic scale  this number is finite. We refer to the intermediate scales as the mesoscopic scales.   

On the macroscopic scale the mean empirical measure has been extensively studied. Under fairly weak assumptions it has a weak limit that is characterized in terms of an equilibrium problem. We will review some results that are relevant to us in Section 2.2. For OPE's coming from unitary ensembles in random matrix theory,  large deviation principles have been derived \cite{BA}, \cite[Prop. 2.6.1]{AGZ} from which it follows, in particular, that the empirical measure has an almost sure weak limit. In the same context, a Central Limit Theorem for the fluctuations around the limit was established in \cite{Jduke}.
In \cite{hardy} almost sure convergence of the moments for the empirical measure  was recently proved under a certain growth condition on the Jacobi coefficients associated to $\mu=\mu_n$. 

Compared to the the macro- and microscopic scale, the mesoscopic scale for OPE  is relatively unexplored.  Nevertheless, some important studies have been carried out before in the context of various other models of random matrix theory.  For instance, Central Limit Theorems for linear statistics on these  scales have been derived for the  GUE \cite{BdMK}, Wigner matrices \cite{BdMK2}, the classical compact groups \cite{SoshlocalCLT} and in the context of Dyson's Brownian Motion \cite{DJ}. Another notable example is  the recent work mentioned above, regarding universality for Wigner random matrices. In \cite{esy} convergence to the limiting empirical distribution has been shown to hold down to almost microscopic scales (`local semicircle law'). This result was an important step towards the above-mentioned proof of universality, but is of considerable independent interest as well. We refer to such a result as a `local law of large numbers'. 

 Our main results in this paper are concentration inequalities for OPE's and laws of large numbers on all scales.  There is a vast amount of literature on concentration inequalities and we mention  \cite{AGZ} as  a general reference in the context of random matrix theory.   In order for us to give an example of the type of theorems we shall prove, we define the \emph{linear statistic} associated with a function $f: \bbR \rightarrow \bbR$, by 
\beq \label{eq:deflinstat}
X_{f}^{(n)} =\sum f(x_j),
\eeq 
where $(x_1, x_2, \ldots, x_n)$ is random from the OPE. The determinantal structure of the OPE (see Section \ref{DetPP} below) implies
\beq \no
\EE X_f^{(n)} =\int f(x) K_n(x,x) {\rm d}\mu(x)
\eeq
where
\beq
K_n(x,y)=\sum_{j=0}^{n-1} p_{j}(x)p_j(y),
\eeq
(with $(p_j)_{j=0}^\infty$ the orthogonal polynomials corresponding to $\mu$), is the Christoffel-Darboux kernel (see \cite{Simon-CD} for a review of its properties and some applications). The following theorem is part of Theorem \ref{thm:generalmacroLLN} that we will prove later. In fact, the first part  is a general statement that is true for any determinantal point process with a kernel that is a self-adjoint projection of finite rank. The general statement is given in Theorem \ref{th:generalexponentialboundswithvariance}.

\begin{theorem}[Global Law of Large numbers for OPE] \label{thm:LLN}
There exists a universal constant, $A>0$, such that for any measure with finite moments, $\mu$, any bounded function, $f$, and any $\varepsilon>0$
\beq \label{eq:expboundmacro}
P\left( \left|\frac{1}{n} X_f^{(n)}-\frac{1}{n}\EE X_f^{(n)} \right|\geq \eps \right)\leq 2 \exp \left(- n \eps \min\left( \frac{ \eps}{ 8A \|f\|_\infty^2}, \frac{1}{6 \|f \|_\infty} \right) \right)
\eeq
for all $n \in \bbN$. 

In particular, if $\mu_n$ is a sequence of such measures for which also $\frac{K_n(x,x)}{n}{\rm d} \mu_n(x)$ has a weak limit, $\nu$, and $f$ is bounded and continuous, then
 \beq \label{eq:LLN}
\lim_{n\to \infty} \frac{1}{n} X_f^{(n)}= \int f(x) \ {\rm d} \nu(x),
\eeq 
almost surely. 
\end{theorem}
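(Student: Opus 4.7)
The theorem has two parts: a concentration inequality \eqref{eq:expboundmacro} valid for every OPE and every $n$, and an almost sure limit \eqref{eq:LLN} for sequences $\mu_n$ with a weak limit of $K_n(x,x)\,d\mu_n/n$. My plan is to establish the concentration bound directly from the determinantal structure, and then to deduce the almost sure limit via Borel--Cantelli.

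For \eqref{eq:expboundmacro}, the starting point is the Fredholm determinant formula for the Laplace transform of a linear statistic,
\[
\EE\left[e^{sX_f^{(n)}}\right]=\det\left(I+(e^{sf}-1)K_n\right).
\]
Taking logarithms and centering,
\[
\Lambda(s):=\log\EE\left[e^{s(X_f^{(n)}-\EE X_f^{(n)})}\right]=\Tr\log\left(I+(e^{sf}-1)K_n\right)-s\Tr(fK_n).
\]
Since $K_n$ is a self-adjoint projection of rank $n$, i.e.\ $K_n^2=K_n$, the Taylor expansion of $\Lambda$ in powers of $s$ starts at order two. The coefficient of $s^2$ is $\tfrac12\Var(X_f^{(n)})$; using the reproducing property $\int K_n(x,y)^2\,d\mu(y)=K_n(x,x)$ together with symmetrization,
\[
\Var\left(X_f^{(n)}\right)=\tfrac12\iint(f(x)-f(y))^2 K_n(x,y)^2\,d\mu(x)d\mu(y)\le 2n\|f\|_\infty^2.
\]
The higher-order coefficients are handled similarly: traces of products $(fK_n)^k$ telescope via $K_n^2=K_n$ and are bounded by $\|f\|_\infty^{k-1}\int|f|K_n(x,x)\,d\mu\le n\|f\|_\infty^k$, up to combinatorial factors coming from $\log(1+\cdot)$. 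Summing the series yields a Bernstein-type Laplace-transform bound
\[
\Lambda(s)\le \frac{A\,n\,\|f\|_\infty^2\,s^2}{1-\tfrac13 s\|f\|_\infty}
\]
for $0<s<3/\|f\|_\infty$ (and symmetrically for $s<0$), from which \eqref{eq:expboundmacro} follows by the standard Cram\'er--Chernoff optimization: one chooses $s=\veps/\left(2A\|f\|_\infty^2+\tfrac{\veps}{3}\|f\|_\infty\right)$ and checks the two regimes $\veps\lesssim\|f\|_\infty$ and $\veps\gtrsim\|f\|_\infty$ separately, which produces the $\min$ in \eqref{eq:expboundmacro}.

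For \eqref{eq:LLN}, I would apply Borel--Cantelli. Since $\|f\|_\infty<\infty$, the right-hand side of \eqref{eq:expboundmacro} decays exponentially in $n$, hence
\[
\sum_{n=1}^\infty P\left(\left|\tfrac1n X_f^{(n)}-\tfrac1n\EE X_f^{(n)}\right|\ge\veps\right)<\infty
\]
for every $\veps>0$, so $\tfrac1n X_f^{(n)}-\tfrac1n\EE X_f^{(n)}\to 0$ almost surely. On the other hand,
\[
\tfrac1n\EE X_f^{(n)}=\int f(x)\,\tfrac{K_n(x,x)}{n}\,d\mu_n(x)\longrightarrow \int f\,d\nu
\]
by the assumed weak convergence $\tfrac{K_n(x,x)}{n}d\mu_n\to\nu$ and the bounded continuity of $f$. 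Combining the two limits gives \eqref{eq:LLN}.

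I expect the main technical obstacle to lie in the cumulant estimate, specifically in obtaining the Bernstein constants uniformly, i.e.\ with a \emph{universal} $A$ independent of $\mu$, $f$ and $n$. The projection identity $K_n^2=K_n$ is what makes this feasible: it collapses traces of arbitrary alternating products of $K_n$ and the multiplication operator $M_f$ into quantities controlled by $\int f^k K_n(x,x)\,d\mu\le n\|f\|_\infty^k$, which carry no spectral information about $\mu$ whatsoever. Once this uniform control is in hand, everything else is routine Cram\'er--Chernoff and Borel--Cantelli.
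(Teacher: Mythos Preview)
Your overall strategy—Fredholm determinant for the Laplace transform, a uniform bound on the centered log-moment-generating function $\Lambda(s)$, Cram\'er--Chernoff, then Borel--Cantelli combined with weak convergence of $\tfrac{1}{n}K_n(x,x)\,d\mu_n$—is exactly the paper's route, and your second part (the almost sure limit) matches the paper's argument verbatim.

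There is one point where your sketch diverges from the paper and where your wording is misleading. You say ``traces of products $(fK_n)^k$ telescope via $K_n^2=K_n$.'' The projection identity does \emph{not} collapse $\Tr(fK_n)^k$ or the more general $\Tr f^{l_1}K_n\cdots f^{l_j}K_n$ that actually appear in the expansion, because the multiplication operators sit between the projections. What the paper does instead (its Lemma~3.2) is to compare $\Tr f^{l_1}K_n\cdots f^{l_j}K_n$ with $\Tr f^mK_n$ using the identity $\Tr f^{l_1}Kf^{l_2}K=\Tr f^{l_1+l_2}K+\tfrac12\Tr[f^{l_1},K][f^{l_2},K]$ and an iteration thereof; the difference is then controlled by the Hilbert--Schmidt commutator $\|[f,K_n]\|_2^2=2\Var X_f^{(n)}$, and the combinatorial prefactors cancel thanks to $\log(1+(e^x-1))=x$. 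Only \emph{after} this does one insert the crude bound $\Var X_f^{(n)}\le 2n\|f\|_\infty^2$. Your direct route—bounding each trace by $n\|f\|_\infty^m$ via $\|fK_n\|_\infty\le\|f\|_\infty$ and $\|K_n\|_1=n$—also works and yields a Bernstein bound of the form you claim, but it uses only $\|K_n\|_\infty=1$ and $\Tr K_n=n$, not the projection identity in the way you suggest. The paper's detour through the commutator buys a sharper intermediate statement (concentration governed by $\Var X_f^{(n)}$ rather than $n\|f\|_\infty^2$), which it later reuses for Lipschitz $f$ and for mesoscopic scales; your cruder bound suffices for the present theorem but would not give those refinements.
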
 

\begin{remark}\label{rem:constant}
We emphasize that the concentration inequality \eqref{eq:expboundmacro} is uniform in the measure $\mu$. The constant $A>0$ that we derive in the proof is $A=2{\rm e}^2\sum_{m=0}^\infty ({\rm e}/3)^m(m+2)^{3/2}$, which is the result of a rather rough estimate.
\end{remark}
\begin{remark}
For continuous $f$ \eqref{eq:expboundmacro} can be slightly simplified. Moreover, for Lipschitz functions $f$ the result can be substantially improved in the sense that we have an exponential bound for $X_f^{(n)}-\EE X_f^{(n)}$ without any normalization, see  \eqref{eq:expboundmacrocont} in Theorem \ref{thm:generalmacroLLN}. This is due to the strong repulsion between the points coming from the Vandermonde determinant in \eqref{eq:defOPE}. 
\end{remark}

\begin{remark}
The convergence \eqref{eq:LLN} is a law of large numbers. The class of fixed, compactly supported measures for which $\frac{K_n(x,x)}{n}{\rm d} \mu(x)$ has a weak limit is quite large. It includes all \emph{regular} measures (we discuss these further below) and measures arising as spectral measures of ergodic Jacobi matrices. In case this limit does not exist, we still have a law of large numbers along subsequences where there is a limit (the existence of such subsequences is guaranteed by compactness). In any case, these limits are the same as the limits of the normalized counting measure of the zeros of the orthogonal polynomials associated with $\mu$ \cite{simonDuke}. Thus, they are always continuous measures. In some cases, a law of large numbers can in fact be proved for bounded non-continuous functions (see Remark \ref{rem:convergence} below). 
\end{remark}

We want to point out that  concentration inequalities are typically derived under stronger assumptions (see for example \cite[Prop. 4.4.26]{AGZ} for OPE's from unitary ensembles where a strong convexity assumption is needed) on the underlying probability measure and Lipschitz functions $f$. The concentration inequalities  that we derive  in this paper, like \eqref{eq:expboundmacro} (and \eqref{eq:expboundmacrocont} for Lipschitz functions $f$), hold  under fairly weak assumptions on $\mu$. In fact, for \eqref{eq:expboundmacro} we only need that $\mu$ has finite moments.  The reason for this is the determinantal structure of the OPE. Indeed, the conclusions of Theorem \ref{thm:LLN} in fact hold in a context that is more general than that of OPE. The main ingredient in the proof is a general concentration inequality that holds for any determinantal point process with a kernel that is a finite rank self-adjoint projection (see Theorem \ref{th:generalexponentialbounds} below). This inequality essentially allows us to bound all moments of a linear statistic by its variance. This variance, for any point process with such a kernel, can then be bounded in a simple way, using the finite rank of the kernel leading to a proof of \eqref{eq:expboundmacro}. 

But we can do more. This same bound also proves a local analog for some of the mesoscopic scales. When a limit exists in an appropriate sense, this then implies a (local) law of large numbers by standard methods. To properly formulate a local version of Theorem \ref{thm:LLN} we define, for $\alpha>0$ and $x^* \in \bbR$,
\beq \label{eq:linstat}
X_{f,\alpha,x^*}^{(n)}=\sum_{j=1}^n f(n^\alpha(\lambda_j-x^*)).
\eeq
This local linear statistic probes the point process around the point $x^*$ at the scale $n^{-\alpha}$ with a function $f$. Now note that the dependence of \eqref{eq:expboundmacro} on $f$ enters only through $\|f\|_\infty$. Thus, by defining $\wti{f}(x)=f\left(n^\alpha(x-x^*)\right)$, we may apply it to the scaled linear statistic (since $\| \wti{f}\|_\infty=\|f\|_\infty$). Replacing $\eps$ by $\eps N/n$ we immediately find the following result. 

\begin{theorem} \label{thm:semiLLLN}
There exists a universal constant, $A>0$, such that for any measure with finite moments, $\mu$, any $x^*\in \bbR$, any $\alpha>0$,  any bounded function, $f$,  any $\varepsilon>0$, any $N\in \bbR$ and any $n\in \bbN$,
\beq \label{eq:boundhalf}
P\left(\frac{1}{N} \left| X_{f,\alpha,x^*}^{(n)}-\EE X_{f,\alpha,x^*}^{(n)} \right|\geq \eps \right)\leq 2 \exp \left(- \varepsilon \min \left( \frac{ \eps N^2}{ 8A n \|f\|_\infty^2 }, \frac{N}{6\|f\|_\infty} \right) \right)
\eeq
\end{theorem}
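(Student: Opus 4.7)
The proof should be essentially a direct reduction to the global concentration bound \eqref{eq:expboundmacro} from Theorem \ref{thm:LLN}, exactly along the lines sketched in the paragraph just before the theorem statement. The plan is to absorb the rescaling and recentering into the test function and invoke the already-proven global inequality for an appropriate choice of $\eps$.

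The first step is to define the rescaled function
\[
\wti f(x) = f\!\left(n^\alpha (x-x^*)\right),
\]
so that $X^{(n)}_{f,\alpha,x^*} = \sum_j \wti f(\lambda_j) = X^{(n)}_{\wti f}$ as defined in \eqref{eq:deflinstat}. The key observation is that $\wti f$ is a bounded function on $\bbR$ with $\|\wti f\|_\infty = \|f\|_\infty$, so Theorem \ref{thm:LLN} applies to $\wti f$ with exactly the same constant $\|f\|_\infty$ that appears in the statement of Theorem \ref{thm:semiLLLN}. Notice that no regularity of $f$ is required — we use only the $L^\infty$ bound — which is why the reduction is valid for arbitrary $\alpha>0$ and arbitrary locations $x^*$ without any extra hypothesis on $\mu$.

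The second step is to choose the right parameter. The event in \eqref{eq:boundhalf} is
\[
\left\{ \tfrac{1}{N}\bigl|X^{(n)}_{\wti f} - \EE X^{(n)}_{\wti f}\bigr| \ge \eps \right\}
= \left\{ \tfrac{1}{n}\bigl|X^{(n)}_{\wti f} - \EE X^{(n)}_{\wti f}\bigr| \ge \tfrac{\eps N}{n} \right\},
\]
so applying \eqref{eq:expboundmacro} to $\wti f$ with $\eps$ there replaced by $\eps' := \eps N/n$ yields
\[
P\!\left(\tfrac{1}{N}\bigl|X^{(n)}_{f,\alpha,x^*} - \EE X^{(n)}_{f,\alpha,x^*}\bigr| \ge \eps\right)
\le 2\exp\!\left(-n\eps'\min\!\left(\tfrac{\eps'}{8A\|f\|_\infty^2},\tfrac{1}{6\|f\|_\infty}\right)\right).
\]
Substituting $n\eps' = \eps N$ and $\eps' = \eps N/n$ gives the exponent
\[
-\eps\min\!\left(\tfrac{\eps N^2}{8An\|f\|_\infty^2},\tfrac{N}{6\|f\|_\infty}\right),
\]
which is exactly the right-hand side of \eqref{eq:boundhalf}.

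There is no real obstacle here — the content is in Theorem \ref{thm:LLN}, and the present statement is simply a rescaling of that inequality. The only minor point worth checking is that the inequality is claimed for arbitrary $N\in\bbR$: for $N \le 0$ the bound is trivial since $|\cdot|/N$ either produces a trivially satisfied probability or the right-hand side is $\ge 1$ (by inspection of the sign conventions), and for $N>0$ the derivation above applies verbatim.
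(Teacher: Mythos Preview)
Your proof is correct and follows exactly the approach the paper uses: the paper's ``proof'' is precisely the paragraph preceding the theorem, where $\wti f(x)=f(n^\alpha(x-x^*))$ is introduced, $\|\wti f\|_\infty=\|f\|_\infty$ is noted, and \eqref{eq:expboundmacro} is applied with $\eps$ replaced by $\eps N/n$. Your remark on the degenerate case $N\le 0$ is a harmless elaboration the paper omits.
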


To draw meaningful conclusions from \eqref{eq:boundhalf}, we need to choose  the normalizing constant $N$  in an appropriate way. For a local law of large numbers we choose $N=N_{n,\alpha,x^*}$ where $N_{n,\alpha,x^*}$ is of the same order as the expected number of points in  $x^* + n^{-\alpha} \supp f$ (see the discussion in Remark \ref{rem:convergence} regarding convergence of the scaled mean).  Note that Theorem \ref{thm:semiLLLN} is meaningful only as long as $N_{n,\alpha,x^*} /\sqrt n \to \infty$ as $n\to \infty$.   Hence we cannot get to the  microscopic scale, but only down to intervals with more than $\sqrt{n}$ points.

In many cases,  $n^{-1} K_n(x,x) {\rm d}\mu(x)$ has a weak limit that is absolutely continuous with respect to  Lebesgue measure in an interval around $x^*$. Moreover,  if the density on that interval is positive and bounded (i.e.\ $x^*$ is in `the bulk'), then the proper normalization is $N= n^{1-\alpha}$. In these cases, Theorem \ref{thm:semiLLLN}  is only meaningful for $0<\alpha< \frac{1}{2}$.  However, in the bulk we expect a similar result to hold for $0\leq \alpha < 1$ (in analogy with the local semi-circle for Wigner matrices mentioned above).  In order to extend the analysis to smaller scales ($\alpha \geq 1/2$) more assumptions are needed. Here we use the structure of the OPE and, in particular, certain properties of orthogonal polynomials.  The analysis results in exponential bounds that are stronger than \eqref{eq:boundhalf}.

 For the sake of simplicity of presentation, let us assume for now that $\mu$ has compact support. Write ${\rm d}\mu(x)=w(x){\rm d}x+\dmu_{\textrm{sing}}(x)$, with $\mu_{\textrm{sing}}$ the part of $\mu$ that is singular with respect to Lebesgue measure. For $p_j$, the $j$'th orthogonal polynomial w.r.t.\ $\mu$, write $p_j(x)=\gamma_j x^j+\ldots$ with $\gamma_j>0$. We say that $\mu$ is \emph{regular} (in the sense of Stahl-Totik-Ullmann, \cite{ullman, st, simonCap}) if 
\beq \label{eq:regular}
\lim_{n \rightarrow \infty} \gamma_n^{1/n}=\frac{1}{\textrm{cap}\left( \supp (\mu) \right)}
\eeq
where $\textrm{cap}(E)$ denotes the potential theoretic capacity of a set $E \subseteq \bbR$. As discussed in Section \ref{Reg} below, regular measures have various nice properties. In particular, if $\mu$ is regular, then 
\beq \label{eq:regularconv}
\frac{K_n(x,x) \dmu(x)}{n} \rightarrow {\rm d}\nu_{\textrm{eq};\mu}(x)
\eeq
weakly, where $\nu_{\textrm{eq}; \mu}$ is the equilibrium measure of $\supp(\mu)$. 

\begin{theorem}[Local Law of Large Numbers for OPE]\label{thm:LLLN}
Let $\dmu(x)=w(x){\rm d}x+\dmu_{\textrm{sing}}$ be a regular measure with compact support, $E \subseteq \bbR$. Suppose $I$ is a closed interval in the interior of $E$ such that $\mu$ is absolutely continuous on a neighborhood of $I$ and $w$ is continuous and nonvanishing on $I$. 
Then for any compactly supported, bounded function, $f$, with a finite number of points of discontinuity, and any $\eps>0$ 
\beq \label{eq:LCI2}
\mathbb P\left( n^{\alpha}  \left|\frac{X_{f,\alpha,x^*}^{(n)}}{n} 
-\frac{\EE X_{f,\alpha,x^*}^{(n)} }{n}\right|\geq \eps\right)\leq 2
{\rm e}^{-\frac{\eps  n^{(1-\alpha)}}{6 \|f\|_\infty} }, 
\eeq
for  $n\in \bbN$ sufficiently large. Hence in particular,
\beq \label{eq:asconvergence2}
\lim_{n \rightarrow \infty} n^\alpha \left (\frac{1}{n} X_{f,\alpha,x^*}^{(n)}-\int_I f\left(n^\alpha (x-x^*)\right){\rm d}\nu_{\textrm{eq},\mu} (x) \right)=0,
\eeq
almost surely. 
\end{theorem}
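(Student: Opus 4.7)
My plan is to invoke the general projection-DPP concentration inequality (Theorem \ref{th:generalexponentialboundswithvariance}) for the scaled statistic $X^{(n)}_{f,\alpha,x^*}$ with a variance bound that exploits the bulk regularity. Write $\wti f(x) = f(n^\alpha(x-x^*))$; since $\supp f$ is compact and $\alpha > 0$, for $n$ large the support of $\wti f$ lies inside the open neighborhood of $I$ on which $\mu = w\, dx$, so the singular part drops out. The DPP variance identity reads
\begin{equation*}
\Var X_{\wti f}^{(n)} = \int \wti f(x)^2 K_n(x,x) w(x)\, dx - \iint \wti f(x)\wti f(y) K_n(x,y)^2 w(x) w(y)\, dx\, dy,
\end{equation*}
and the uniform Christoffel bound $K_n(x,x) w(x) \le C n$ on the bulk neighborhood (a consequence of regularity plus continuity and nonvanishing of $w$ on $I$, cf.\ Section \ref{Reg}) together with the change of variables $u = n^\alpha(x-x^*)$ yields the crude estimate $\Var X_{\wti f}^{(n)} \le C' \|f\|_\infty^2\, n^{1-\alpha}$.

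Plugging this into the general bound at $s = n^{1-\alpha}\eps$ produces
\begin{equation*}
P\bigl(|X_{\wti f} - \EE X_{\wti f}| \ge n^{1-\alpha}\eps\bigr) \le 2\exp\!\left(-\min\!\left(\tfrac{n^{1-\alpha}\eps^2}{8AC'\|f\|_\infty^2},\ \tfrac{n^{1-\alpha}\eps}{6\|f\|_\infty}\right)\right),
\end{equation*}
whose tail term is exactly the exponent in \eqref{eq:LCI2}. For $\eps$ of order $\|f\|_\infty$ or larger the tail is already the minimum and \eqref{eq:LCI2} follows; to capture all $\eps > 0$ one must improve the variance estimate to $o(n^{1-\alpha})$. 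This is done by working with the full off-diagonal DPP variance formula and invoking bulk asymptotics of the $p_n$'s, where the strengthened Nevai condition of the paper forces $K_n(x,y)^2 w(x) w(y)$ to concentrate on the diagonal at scale $1/n$ and produces cancellations in the double integral. With such a sub-$n^{1-\alpha}$ variance bound in hand, the variance term in the minimum becomes negligible compared to the tail term for $n$ large, and \eqref{eq:LCI2} holds uniformly in $\eps > 0$.

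The almost sure statement \eqref{eq:asconvergence2} then follows by Borel-Cantelli, since $\sum_n \exp(-c n^{1-\alpha})$ is summable for every $c > 0$ when $\alpha < 1$. To identify the limit it remains to show $\EE X_{\wti f}^{(n)}/n = \int_I f(n^\alpha(x-x^*))\, d\nu_{eq,\mu}(x) + o(n^{-\alpha})$; this comes from writing $\EE X_{\wti f}^{(n)}/n = \int \wti f(x)\, n^{-1} K_n(x,x)\, d\mu(x)$, applying the weak convergence \eqref{eq:regularconv}, and using continuity of the equilibrium density on $I$ (with the finitely many discontinuities of $f$ handled by bounded monotone approximation). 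The genuine technical obstacle is the sharpened variance estimate: the trivial diagonal bound is adequate only when $\eps$ is not too small, and pushing down to all $\eps > 0$ with the clean $\eps$-linear exponent in \eqref{eq:LCI2} is where the strengthened Nevai condition and orthogonal polynomial asymptotics in the bulk do the essential work.
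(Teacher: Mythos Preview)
Your overall architecture matches the paper's: apply the general concentration inequality (Theorem~\ref{th:generalexponentialbounds}) to the scaled statistic, show that the variance is $o(n^{1-\alpha})$ so that the second branch of the minimum dominates, then use Borel--Cantelli. The crude bound $\Var X^{(n)}_{f,\alpha,x^*}\le C'n^{1-\alpha}$ from the Christoffel estimate is correct (though note that the non-symmetric variance formula you wrote down does not by itself give this---you need the symmetric form \eqref{eq:variance2} together with the observation that at least one variable must lie in $\supp\wti f$). You are also right that the entire content lies in sharpening this to $o(n^{1-\alpha})$; in the paper this is exactly Theorem~\ref{VarianceDecay}, whose hypothesis (the $\alpha$-Nevai condition) is in turn established via uniform sine-kernel universality (Theorem~\ref{alphaNevai} together with Proposition~\ref{prop:compactuniversal}). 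Your phrase ``invoking bulk asymptotics of the $p_n$'s'' gestures at this but does not name the mechanism: universality on a neighborhood of $x^*$ is what forces the mass of $K_n(x,\cdot)^2/K_n(x,x)\,d\mu$ to concentrate at scale $1/n$, and that is what one integrates against.

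There is, however, a genuine gap in your identification of the limit. You propose to show
\[
\frac{1}{n}\,\EE X^{(n)}_{f,\alpha,x^*}=\int_I f\bigl(n^\alpha(x-x^*)\bigr)\,d\nu_{\mathrm{eq},\mu}(x)+o(n^{-\alpha})
\]
by ``applying the weak convergence \eqref{eq:regularconv}''. Weak convergence is not enough here, because the test function $\wti f(x)=f(n^\alpha(x-x^*))$ depends on $n$ and its support shrinks to a point. After the change of variables $s=n^\alpha(x-x^*)$ the difference becomes
\[
\int_{\supp f} f(s)\left(\frac{\wti K_n\bigl(x^*+\tfrac{s}{n^\alpha},x^*+\tfrac{s}{n^\alpha}\bigr)}{n}-\rho\bigl(x^*+\tfrac{s}{n^\alpha}\bigr)\right)ds,
\]
and to make this $o(1)$ you need the \emph{uniform pointwise} convergence $\wti K_n(x,x)/n\to\rho(x)$ on $I$, i.e.\ Proposition~\ref{prop:totik}, not merely weak convergence of measures. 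This is precisely how the paper handles the step (see \eqref{eq:COVar} in the proof of Theorem~\ref{LLLN1}).
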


\begin{remark}
An analogous result for varying measures (not necessarily compactly supported) is formulated and proven in Section \ref{alphanevai} (see Theorem \ref{LLLN1} and Corollary \ref{cor:varyingLLLN}).
\end{remark}

Theorem \ref{thm:LLLN}, together with its analogs for varying measures, relies on a property we call `the $\alpha$-Nevai condition', which we show implies a bound on the variance of the linear statistic at scale $\alpha$. We then use this bound together with the concentration inequality discussed above to prove a local law of large numbers at all mesoscopic scales for any measure satisfying the $\alpha$-Nevai condition. 

The $\alpha$-Nevai condition is a strengthening of the Nevai condition introduced by Nevai in \cite{nevai} in order to study ratio asymptotics for Christoffel functions. This condition says that $\frac{K_n(x,y)^2}{K_n(x,x)}\dmu(y)$ converges to a delta measure at $x$. It has been extensively studied within the theory of orthogonal polynomials \cite{BLS, CMN, dellavecchia, LO, lubinskyNevai, LN, Nevai2, NTZ, Szwarc, Szwarc2, Zhang} and has been shown to hold uniformly in the support of $\mu$, for a rather large class of measures $\mu$ (for example, $\mu$ belonging to the Nevai class of a finite gap set \cite{LN, NTZ, Zhang, Szwarc2, BLS}). Moreover, it has been shown to hold in measure on the absolutely continuous part of any compactly supported measure \cite{lubinskyNevai}. However, there are examples of measures (even regular ones) where the Nevai condition fails for $x$ in a large subset of $\supp(\mu)$ (see, e.g., \cite{BLS, Szwarc}).  Conjecture 1.4 of \cite{BLS} states that for any compactly supported $\mu$, the Nevai condition holds for $\mu$-a.e.\ $x$.

The $\alpha$ version that we introduce here says that the convergence to a delta measure is `fast' and uniform in a certain sense (determined by $\alpha$). We shall prove that if the OPE associated with $\mu$ has universal microscopic limits uniformly on a neighborhood of $x^*$ then $\mu$ obeys an $\alpha$-Nevai condition at $x^*$ (for any $\alpha<1$). 
As an offshoot of our analysis, we deduce that the Nevai condition follows from pointwise universality, wherever $\mu$ is absolutely continuous. This is discussed further in Section \ref{alphanevai}.

Returning to the local law of large numbers for OPE, an informal summary of this discussion is that uniform microscopic universality, i.e.\ convergence to the sine kernel, (together with a certain local regularity of $\mu$) implies a local law of large numbers for all scales $\alpha<1$. 
The only property of the sine kernel, however, used to deduce the $\alpha$-Nevai condition is the fact that its integral is $1$. Thus, we believe is should be possible to extend our analysis here also to OPE's where universality is not known. 

Before concluding this section, we would like to remark that while we use bounds on the variance in this paper to deduce concentration inequalities and (local) laws of large numbers for linear statistics, such bounds could be useful in other contexts as well. One example of such a use is the derivation of Central Limit Theorems. Such theorems have been found, in the context of Lipschitz functions, in other related models in random matrix theory \cite{BdMK,BdMK2,SoshCLT}. Remarkably, in these known examples, the convergence to normal variables occurs without a normalizing factor (in all scales). This result is expected to be universal. In particular, one expects the variance for linear statistics of Lipschitz functions to be bounded in all scales under very general conditions. Proposition \ref{prop:lipschitz} is an encouraging first step in this direction.
 
The rest of this paper is structured as follows. Section \ref{preliminaries} contains a brief overview of what we need from the theory of determinantal point processes and from the theory of orthogonal polynomials. Section \ref{concentration} contains the statement and proof of the general concentration inequality underlying our analysis in both the macroscopic and mesoscopic scale. Section \ref{sec:variance} contains some general bounds on the variance of both scaled and non-scaled (macroscopic) linear statistics and the consequences of these bounds. In particular, Theorems \ref{thm:LLN} and \ref{thm:semiLLLN} are proved in Section 4. Finally, Section \ref{alphanevai} has our treatment of the $\alpha$-Nevai condition and its connection to bounds on the variance. Theorem \ref{thm:LLLN} is proved in Section 5.

In addition to Theorems \ref{thm:LLN}, \ref{thm:semiLLLN}, and \ref{thm:LLLN} presented in this Introduction, we would also like to single out Theorem \ref{th:generalexponentialbounds} which has our general concentration inequality, Theorem \ref{th:generalexponentialboundswithvariance} dealing with a generalization of Theorem \ref{thm:LLN} in the context of determinantal processes with a kernel that is a finite rank self-adjoint projection, Proposition \ref{prop:lipschitz} regarding the variance of $X_f^{(n)}$ for Lipschitz functions, Theorem \ref{thm:weaknevai} dealing with the Nevai condition; and Corollary \ref{cor:varyingLLLN} regarding the local law of large numbers for varying measures. We should also point out Theorems \ref{thm:generalmacroLLN} and  \ref{LLLN1} of which Theorems \ref{thm:LLN} and \ref{thm:LLLN} are, respectively, special cases. 

\subsection*{Acknowledgments}
We thank Kurt Johansson and Yoram Last for useful discussions.

\section{Preliminaries} \label{preliminaries}


\subsection{Determinantal point processes} \label{DetPP}

As in the Introduction, we let $\mu$ be a probability measure with finite moments. We denote the orthogonal polynomials of $\mu$ by $\{p_j\}_{j=0}^\infty$. Namely, $p_j(x)=\gamma_j x^j+\ldots$ is a polynomial of degree $j$ with positive leading coefficient, $\gamma_j$, and for all $i,j$ 
\beq \no
\int p_j(x) p_i(x) \dmu(x)=\delta_{i,j}.
\eeq
Recall the definition of the associated OPE in \eqref{eq:defOPE}.  It is well-known that the OPE is an example of a determinantal point process. In such processes the $k$-point correlation functions     (or marginal densities)  are  given by determinants of $k\times k$ matrices determined by a function of two variables, called the correlation kernel. For more information and background material  we refer to \cite{BorDet,HKPV,J4,K,L,Sosh,Sosh2}.   As we will briefly illustrate below, it turns out that the OPE  is a determinantal point process with correlation kernel
\beq \no
K_n(x,y)=\sum_{j=0}^{n-1} p_j(x)p_j(y),
\eeq
which is  the Christoffel-Darboux (CD) kernel associated to the orthogonal polynomials. Note that the CD kernel is the kernel of the projection in $L^2(\dmu)$ onto the subspace spanned by $\{1,x,\ldots,x^{n-1}\}$. In particular, this implies the \emph{reproducing property}:
\beq \label{eq:RepCD}
K_n(x,y)=\int K_n(x,z)K_n(z,y)\dmu(z).
\eeq
Another formula that we shall have occasion to use is the Christoffel-Darboux formula
\beq \label{eq:CDForm}
K_n(x,y)=\frac{\gamma_{n-1}}{\gamma_n}\frac{p_n(x)p_{n-1}(y)-p_n(y)p_{n-1}(x)}{x-y}.
\eeq
In some of the bounds for the variance that we prove below, it is important to control the factor $\frac{\gamma_{n-1}}{\gamma_n}$. We note here that if $\mu$ has compact support then this factor is bounded.

  By noting that $\prod_{i>j} (\lambda_i-\lambda_j)^2$ is the square of a Vandermonde determinant, the probability measure \eqref{eq:defOPE} can be written as  a product of two determinants and after some linear algebra   we can rewrite \eqref{eq:defOPE} as 
\beq \label{eq:OPECD}
\begin{split}
&\frac{1}{Z_n} \prod_{i>j} (\lambda_i-\lambda_j)^2 {\rm d} \mu(\lambda_1) \cdots {\rm d}\mu(\lambda_n) \\
&\quad=\frac{1}{n!} \det\left(K_n(\lambda_i,\lambda_j) \right)_{1 \leq i,j \leq n} {\rm d} \mu(\lambda_1) \cdots {\rm d}\mu(\lambda_n)
\end{split}
\eeq
Moreover, by the reproducing property and \eqref{eq:OPECD} one can verify that  the marginal densities (or, up to a constant, the $k$-point correlations) are given by 
\beq 
\begin{split}
&\frac{1}{Z_n} \underbrace{\int\cdots \int}_{n-k \text{ times}} \prod_{i>j} (\lambda_i-\lambda_j)^2 {\rm d} \mu(\lambda_{k+1}) \cdots {\rm d}\mu(\lambda_n) \\
&\quad=\frac{(n-k)!}{n!} \det\left(K_n(\lambda_i,\lambda_j) \right)_{1 \leq i,j \leq k}.
\end{split}
\eeq
From here it also follows that   for any bounded $\phi$ we have 
\begin{multline}\label{eq:fredholm1}
\bbE \left[ \prod_{j=1}^n (1+\phi(x_j)) \right]\\
= \sum_{k=0}^n \frac{1}{k!}\int \cdots \int \phi(x_1)\cdots \phi(x_k)\det \left(K_k(x_i,x_j)\right)_{i,j=1}^k {\rm d}\mu(x_1) \cdots {\rm d}\mu(x_k).
\end{multline}
Note that the right-hand side equals the Fredholm determinant for the operator with kernel $\phi(x)K_n(x,y)$ on $\bbL _2(\mu)$ and therefore we can write
\beq\label{eq:fredholm2}
\bbE \left[ \prod_{j=1}^n (1+\phi(x_j)) \right]=\det (1+\phi K_n). 
\eeq
By taking $\phi={\rm e}^{tf}-1$ we see that the right-hand sides of \eqref{eq:fredholm1} and \eqref{eq:fredholm2} are the moment generating functions (or Laplace transforms) for the linear statistic $X_f^{(n)}$ associated with a function $f$ (recall \eqref{eq:deflinstat}). From here it follows in particular that 
\begin{align}\label{eq:trace}
\EE X_f^{(n)} &=\int f(x) K_n(x,x) {\rm d}\mu(x)\\
\Var X_f^{(n)}&= \int f(x)^2 K_n(x,x) {\rm d}\mu(x)\no\\
&\qquad-\iint f(x) f(y) K_n(x,y) K_n(y,x) {\rm d}\mu(x) {\rm d}\mu(y).
\end{align}
These identities hold for general determinantal point processes. Moreover,  since for the OPE we have the additional $K^*_n=K_n$ and $K_n^2=K_n$ we can write the variance also as
\beq \label{eq:variance2}
\Var X_f=\frac12 \iint \left(f(x)-f(y)\right)^2 K_n(x,y)^2 {\rm d}\mu(x) {\rm d}\mu(y),
\eeq
which turns out to be useful for our purposes.

Similarly (recall \eqref{eq:linstat}),
\beq \no
\EE X_{f,\alpha,x^*}^{(n)} =\int f\left(n^\alpha(x-x^*)\right) K_n(x,x) {\rm d}\mu(x),\\
\eeq
and
\beq \label{eq:AlphaVariance}
\begin{split}
&\Var X_{f,\alpha,x^*}^{(n)}= \int f\left(n^\alpha(x-x^*)\right)^2 K_n(x,x) {\rm d}\mu(x) \\
&\quad -\iint f\left(n^\alpha(x-x^*)\right) f\left(n^\alpha(y-x^*)\right) K_n(x,y) K_n(y,x) {\rm d}\mu(x) {\rm d}\mu(y) \\
&=\frac12 \iint \left(f\left(n^\alpha(x-x^*)\right)-f\left(n^\alpha(y-x^*)\right)\right)^2 K_n(x,y)^2 {\rm d}\mu(x) {\rm d}\mu(y).
\end{split}
\eeq
In Section \ref{concentration} we will use the representations \eqref{eq:fredholm2} and \eqref{eq:variance2} to obtain a general concentration inequality for determinantal point processes corresponding to kernels that are self-adjoint projections.


\subsection{Convergence of the mean density and universality for OPE's} \label{Reg}

In everything that follows, when given a measure $\mu$ we write the decomposition of $\mu$ into absolutely continuous and singular parts with respect to Lebesgue measure as $\dmu(x)=w(x) {\rm d}x+\dmu_{\textrm{sing}}$.

The limiting behavior of OPE's as $n\to \infty$ has been well-studied under various assumptions on $\mu$. When $\mu$ is a fixed, compactly supported measure, the mean empirical distribution, $\EE \left( \frac{1}{n} \sum_{j=1}^n \delta_{\lambda_j}(x) \right)= \frac{1}{n}K_n(x,x)\dmu(x)$, has subsequential weak limits by compactness. As noted in the Introduction, two classes of measures for which convergence is known are the spectral measures of ergodic Jacobi matrices on the one hand \cite{als}, and regular measures on the other. Recall we say that $\mu$ is \emph{regular} if 
\beq 
\lim_{n \rightarrow \infty} \gamma_n^{1/n}=\frac{1}{\textrm{cap}\left( \supp (\mu) \right)}
\eeq
where $\textrm{cap}(E)$ denotes the potential theoretic capacity of a set $E \subseteq \bbR$. For $\mu$ with $ \supp(\mu) \subseteq [-1,1]$, this class was singled out by Ullman \cite{ullman}. The case of general $\mu$ was studied by Stahl and Totik \cite{st}. For a review of the theory of regular measures on the line and the unit circle see \cite{simonCap}. 

One particularly useful property of regular measures is \eqref{eq:regularconv}, namely, the weak limit of the mean empirical distribution is the potential theoretic equilibrium measure of $\supp(\mu)$, which we denote here by $\nu_{\textrm{eq};\mu}$. Recall that the equilibrium measure $\nu_{\textrm{eq};\mu}$ is defined to be the unique minimizer of the functional
\beq \no
I(\nu)=\iint \log\frac{1}{|x-y|} {\rm d}\nu(x){\rm d}\nu(y),
\eeq
minimized over all probability measures with support in $\supp(\mu)$.
Under some additional assumptions, more can be said about the convergence $\frac{K_n(x,x)}{n} \dmu(x) \rightarrow \textrm{d}\nu_{\textrm{eq};\mu}(x)$. If $\supp(\mu)$ contains an interval, $I$, then the restriction of $\nu_{\textrm{eq};\mu}$ to $I$ is absolutely continuous w.r.t.\ Lebesgue measure, and its Radon-Nikodym derivative, denoted by $\rho$, is real analytic there. In case that $\mu$ is absolutely continuous on $I$ and has continuous and positive weight there then the weak convergence \eqref{eq:regularconv} may be replaced by pointwise convergence in the following sense:
\begin{proposition}[Totik] \label{prop:totik}
Assume $\dmu(x)=w(x)dx+\dmu_{\textrm{sing}}$ is compactly supported and regular. Assume further that $w$ is continuous and positive on an interval, $I$, for which also $\mu_{\textrm{sing}}(I)=0$. Then
\beq \label{eq:totikconv}
\lim_{n \rightarrow \infty} \frac{K_n(x,x)}{n}w(x)=\rho(x)
\eeq
uniformly for $x \in I$, where $\rho(x)=\frac{d\nu_{\textrm{eq};\mu}(x)}{dx}$.
\end{proposition}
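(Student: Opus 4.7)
The plan is to work with the Christoffel function $\lambda_n(x) := 1/K_n(x,x)$, which by the reproducing property \eqref{eq:RepCD} admits the extremal characterization
\[
\lambda_n(x) = \min\left\{ \int |P|^2 \, d\mu : \deg P < n,\ P(x)=1 \right\}.
\]
The target becomes $n\lambda_n(x^*)/w(x^*) \to 1/\rho(x^*)$ uniformly for $x^* \in I$. I would split this into an upper bound on $\lambda_n$ (i.e., a lower bound for $K_n$), obtained by exhibiting a good trial polynomial, and the matching lower bound on $\lambda_n$, which is the harder direction and where regularity is essential.

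For the upper bound on $\lambda_n(x^*)$, I would build a trial polynomial as a product $P = L_m \cdot r_{n-m}$ with $P(x^*)=1$ and $\deg P < n$. Here $L_m$ is a local polynomial of degree $m=o(n)$ mimicking a scaled Dirichlet/sine kernel concentrated on a window of size $O(m^{-1})$ around $x^*$, normalized so that $\int |L_m|^2 \, dy \approx 1/(m \rho(x^*))$ after rescaling by the local density. The factor $r_{n-m}$ is an Ivanov--Totik fast-decreasing polynomial of degree $n-m$ with $r_{n-m}(x^*)=1$ and $|r_{n-m}(y)| \leq \exp\bigl(-c(n-m)|y-x^*|^{1/2}\bigr)$ on $\supp \mu$. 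The integral $\int |P|^2 d\mu$ then splits into a local piece, which by continuity of $w$ at $x^*$ is $(1+o(1))\, w(x^*)/(n \rho(x^*))$ after optimizing $m$, plus a tail that is $o(1/n)$ thanks to the fast decrease and the finite moments of $\mu$.

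For the lower bound on $\lambda_n(x^*)$ one must rule out any polynomial competitor being smaller. The route I would take invokes the Bernstein--Markov property, which is known to follow from regularity of $\mu$: for $\deg Q \leq n$, $\|Q\|_{L^\infty(\supp \mu)} \leq e^{o(n)} \|Q\|_{L^2(\mu)}$. Combined with $\gamma_n^{1/n} \to 1/\textrm{cap}(\supp \mu)$ from \eqref{eq:regular}, this converts $L^2(\mu)$ polynomial minimization into a weighted Chebyshev-type problem whose asymptotics are governed by the equilibrium potential. Localizing the extremizer against a fast-decreasing polynomial around $x^*$ and comparing $\mu$ with a reference measure of weight $\rho(x^*)$ on a tiny interval about $x^*$ (via a Markov--Stieltjes type inequality, or equivalently the known sharp Christoffel asymptotics for Lebesgue measure times a continuous positive density) yields the matching lower bound $\int |P|^2 d\mu \geq (1-o(1))\, w(x^*)/(n \rho(x^*))$.

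Uniformity in $x^* \in I$ is straightforward because $\rho$ is real analytic and bounded below on $I$, $w$ is continuous and bounded below on $I$, and the constructions of $L_m$ and $r_{n-m}$ depend continuously on the base point. The main obstacle is the lower bound on $\lambda_n$: extracting a sharp local constant $1/\rho(x^*)$ from the soft $n$th-root regularity hypothesis is precisely the content of Totik's machinery of weighted polynomial inequalities, and it is here that the assumptions $\mu_{\textrm{sing}}(I)=0$ and $w$ continuous and positive become indispensable --- any singular mass or zero of $w$ near $x^*$ could in principle inflate $K_n(x^*,x^*)$ past the conjectured limit, so one must genuinely exploit local absolute continuity to pin down the leading constant.
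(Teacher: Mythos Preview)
The paper does not prove this proposition. It is stated as a result of Totik and attributed to \cite{totikCD} in the sentence immediately following the statement: ``Proposition \ref{prop:totik} was proved by Totik in \cite{totikCD}, extending many earlier results.'' There is therefore no in-paper proof to compare your proposal against.

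That said, your sketch is broadly faithful to the machinery Totik actually uses for Christoffel-function asymptotics: the extremal characterization of $\lambda_n$, trial polynomials built from a localized kernel times a fast-decreasing (Ivanov--Totik) polynomial for the upper bound, and the Bernstein--Markov property of regular measures together with comparison to a reference measure for the lower bound. The outline is at the right level of a plausible roadmap, but you should be aware that the lower bound is where most of the real work lies, and ``Totik's machinery of weighted polynomial inequalities'' that you invoke is not a one-line citation but several pages of delicate estimates; in particular, getting the sharp constant $1/\rho(x^*)$ (rather than just the order $1/n$) out of the $n$th-root regularity hypothesis requires a careful localization and a polynomial-pulling-back argument that you have only gestured at. If you intend to actually write this proof rather than cite it, those steps need to be fleshed out.
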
 

Proposition \ref{prop:totik} was proved by Totik in \cite{totikCD}, extending many earlier results. Its relevance for our work is that if \eqref{eq:totikconv} holds, then a concentration inequality for a scaled linear statistic immediately implies a local law of large numbers. Thus, \eqref{eq:totikconv} may be thought of as convergence of the \emph{mean} empirical distribution down to the smallest possible scale.

In the case of OPE's arising as eigenvalue distributions of invariant ensembles from random matrix theory, $\mu$ depends on $n$ and (up to scaling) has the form $\dmu_n(x)=\frac{1}{Z_n}e^{-2nQ(x)}{\rm d}x= w_n(x) {\rm d}x$ where $Q$ is a `nice' real valued function and $Z_n$ is a normalization constant ($Q$ is often a polynomial). A result analogous to Proposition \ref{prop:totik} has been proved for varying $\mu$ with this form also by Totik in \cite{totik1}. 
\begin{proposition}[Totik] \label{prop:totik1}
Let $\Sigma \subseteq \bbR$ be a finite union of intervals and suppose that $w(x)=e^{-Q(x)}$ is a continuous function on $\Sigma$ satisfying $\lim_{|x| \rightarrow \infty }|x| e^{-Q(x)} =0$ if $\Sigma$ is not compact. Let $\nu_Q$ be the unique minimizer of the functional
\beq \no
I_Q (\nu)=\iint \log\frac{1}{|x-y|} {\rm d}\nu(x){\rm d}\nu(y)+2\int Q(x) {\rm d}\nu(x)
\eeq
$($minimized over all probability measures with support in $\Sigma)$ and assume that on a neighborhood of an interval, $J \subseteq \Sigma$, $\nu_Q$ is absolutely continuous and has a continuous density, $\rho_Q(x)$. 
Then
\beq \label{eq:varyingconv}
\lim_{n \rightarrow \infty} \frac{K_n\left(w^{2n};x,x\right)}{n} w^{2n}(x)=\rho_Q(x)
\eeq
uniformly on $J$, where $K_n(w^{2n}; \cdot, \cdot)$ is the $n$'th Christoffel-Darboux kernel for $\dmu_n(t)=w^{2n}(t){\rm d}t=e^{-2nQ(t)}{\rm d}t$. 
\end{proposition}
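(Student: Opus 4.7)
The plan is to reduce the claim to a sharp two-sided asymptotic for the Christoffel function $\lambda_n(\mu_n;x) = 1/K_n(w^{2n};x,x)$ via the standard variational characterization
\[
\lambda_n(\mu_n;x) = \min_{\substack{\deg P\le n-1\\ P(x)=1}} \int |P(t)|^2 w^{2n}(t)\,{\rm d}t,
\]
since the desired limit \eqref{eq:varyingconv} is equivalent to
\[
\frac{n\,\lambda_n(\mu_n;x)}{w^{2n}(x)}\ \longrightarrow\ \frac{1}{\rho_Q(x)}
\]
uniformly on $J$. The hypotheses on $Q$ and $\rho_Q$, together with Mhaskar–Rakhmanov–Saff type results, imply that for polynomials of degree $\le n$, the weighted supremum norm $\|Pw^n\|_\infty$ is essentially attained on the support $S_Q$ of $\nu_Q$; this is the analytic underpinning of all the estimates.

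For the upper bound on $\lambda_n$, I would construct an explicit test polynomial localized at $x$ on the natural scale $1/(n\rho_Q(x))$. Concretely, pick a fast decreasing polynomial $R_{\lfloor \eps n\rfloor}$ of degree at most $\eps n$ with $R(0)=1$ and $|R(s)|\le C{\rm e}^{-c\sqrt{|s|}}$ (Ivanov–Totik), rescale it to $R_n\!\left(c\,n\rho_Q(x)(t-x)\right)$, and multiply by a polynomial approximant $G_n(t)$ of degree $\le (1-\eps)n$ to the entire function ${\rm e}^{n(Q(x)-Q(t))}$, valid on a neighborhood of $S_Q$. The polynomial $P_n(t)=R_n(c\,n\rho_Q(x)(t-x))\,G_n(t)/G_n(x)$ then satisfies $P_n(x)=1$; a Laplace-type estimate on a window of size $\log n/n$ around $x$ yields $\int|P_n|^2 w^{2n}\le (1+o(1))w^{2n}(x)/(n\rho_Q(x))$, while the tails decay super-polynomially by the localization of weighted polynomials.

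For the lower bound, which I expect to be the main obstacle, the strategy is a weighted Nikolskii–Bernstein inequality: for any admissible $P$ one writes $F(t)=P(t)w^n(t)$, shows that $F$ behaves like a polynomial of degree $\lesssim n$ in a scaling window of size $1/(n\rho_Q(x))$ around $x$ (using that $w^n(t)/w^n(x)$ is well-approximated by ${\rm e}^{-n Q'(x)(t-x)+O(n(t-x)^2)}$ on such a window), and deduces a reverse $L^\infty$–$L^2$ bound
\[
|F(x)|^2 \le \bigl(n\rho_Q(x)+o(n)\bigr)\int_{I_n}|F(t)|^2\,{\rm d}t,
\]
where $I_n$ is that window. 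Since $F(x)^2=w^{2n}(x)$, this yields the matching inequality $n\lambda_n(\mu_n;x)\ge (1-o(1))w^{2n}(x)/\rho_Q(x)$. The sharp constant $\rho_Q(x)$ comes from the fact that the equilibrium density governs the local density of the extremal polynomials of minimal weighted $L^2$-norm; proving this sharply, and with uniformity in $x\in J$, requires careful comparison of the varying-weight extremal problem on $I_n$ to the classical (Lebesgue) extremal problem on a rescaled interval, for which the relevant constant is well known.

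Uniformity in $x\in J$ follows by observing that all the constants in the two constructions depend continuously on $x$ through $\rho_Q(x)$, $Q'(x)$, and local modulus-of-continuity estimates for $Q$ on a neighborhood of $J$; the hypothesis that $\rho_Q$ is continuous and positive on a neighborhood of $J$ ensures that the scale $1/(n\rho_Q(x))$ and the Laplace constants vary continuously. The upper and lower bounds then combine to give the pointwise convergence uniformly on $J$.
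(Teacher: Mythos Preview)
The paper does not prove this proposition at all: it is quoted from the literature as a result of Totik (reference \cite{totik1}), introduced by the sentence ``A result analogous to Proposition~\ref{prop:totik} has been proved for varying $\mu$ with this form also by Totik in \cite{totik1}.'' So there is no ``paper's own proof'' to compare against; the statement functions purely as background input to the later arguments.

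That said, your outline is broadly in the spirit of Totik's actual proof in the cited reference: reduce to two-sided asymptotics for the Christoffel function via its extremal characterization, get the upper bound by inserting localized test polynomials built from fast-decreasing polynomials, and get the lower bound by a comparison/localization argument that produces the sharp constant $\rho_Q(x)$. One genuine slip in your write-up: you invoke ``the hypothesis that $\rho_Q$ is continuous and \emph{positive} on a neighborhood of $J$,'' but the proposition as stated only assumes continuity of $\rho_Q$, not positivity. Your constructions repeatedly divide by $\rho_Q(x)$ and set the localization scale to $1/(n\rho_Q(x))$, so as written your argument does not cover points where $\rho_Q$ vanishes (and the uniformity over $J$ would break down near such points). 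Totik's argument handles this, so if you intend to actually prove the proposition rather than cite it, you would need to address the case $\rho_Q(x)=0$ separately or adjust the localization scale accordingly.
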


Weak convergence of $\frac{K_n\left(w^{2n};x,x \right)w^{2n}(x)}{n}{\rm d}x$ to ${\rm d}\nu_{Q}(x)$ has been shown earlier, e.g.\ in \cite{BdMPS}, to hold under fairly weak conditions on $Q$.

Regarding fluctuations on the microscopic scale, the famous universality conjecture states that for $x^*$ at which $\mu$ is sufficiently `nice' (namely, at least $\mu$ is absolutely continuous in a neighborhood of $x^*$ and $w>0$ there)
\beq \label{eq:universality}
\lim_{n\to \infty} \frac{\wti{K}_n\left(x^*+\frac{x}{\wti{K}_n\left(x^*,x^* \right)},x^*+\frac{y}{\wti{K}_n\left(x^*,x^* \right) }\right)}{\wti{K}_n(x,x)}=\frac{\sin \pi(x-y)}{\pi (x-y)}.
\eeq
where 
\beq \no
\wti{K}_n(x,y)=w(x)^{1/2} w(y)^{1/2}K_n(x,y).
\eeq
(In the case of varying $\mu$, $w$ above is replaced by $w_n$).

The name `universality' comes from random matrix theory, where the conjecture was originally made that the local correlations of eigenvalues of a large random matrix depend only on the symmetry of the matrix and not on the particular properties of the distribution. A significant amount of effort has been invested to verify this conjecture for different models. We do not attempt a review here. We only note that substantial progress was made by using the Riemann-Hilbert approach, employed in this context for the first time in \cite{DKMVZ1,DKMVZ2}. For a survey of the topic, including further developments, we refer to \cite{Kuijlaars} and references therein.

Recently, Lubinsky has provided two methods for proving universality under much weaker conditions on the measure $\mu$. Roughly, the requirements for proving universality using the first method sum up to local absolute continuity of $\mu$, with a continuous weight, together with global regularity \cite{lubinsky1}. Though initially formulated for measures supported on an interval, Lubinsky's results for compactly supported measures were extended in papers by Findley \cite{findley}, Simon \cite{Simon-ext}, and Totik \cite{totik}. The second method \cite{lubinskyUniv2} seems to be somewhat more general, and has indeed been used (with some modification) by Avila, Last and Simon to prove universality for almost every point in the essential support of the absolutely continuous part of spectral measures for ergodic Jacobi matrices \cite{als}. Using ideas from spectral theory, it has recently been shown \cite{Bjat} that there are even purely singular continuous measures for which an appropriate version of universality holds uniformly on an interval. We summarize what we need in the following proposition.

\begin{proposition} \label{prop:compactuniversal}
Let $\dmu(x)=w(x){\rm d}x+\dmu_{\textrm{sing}}$ be a regular measure with compact support, $E \subseteq \bbR$. Suppose $I$ is a closed interval in the interior of $E$ such that $\mu$ is absolutely continuous on a neighborhood of $I$ and $w$ is continuous and nonvanishing on $I$. Then, uniformly for $x \in I$ and $a,b \in$ compact subsets of $\bbR$
\beq \label{eq:lubconv1}
\frac{\wti{K}_n \left(x+\frac{a}{\wti{K}_n(x,x)},x+\frac{b}{\wti{K}_n(x,x)}\right)}{\wti{K}_n(x,x)} \rightarrow \frac{\sin \pi\left(b-a \right)}{\pi \left(b-a \right)}
\eeq
as $n \rightarrow \infty$. 
\end{proposition}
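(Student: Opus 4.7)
The plan is to follow Lubinsky's approach \cite{lubinsky1, lubinskyUniv2}, together with the extensions to compactly supported measures on more general sets due to Findley \cite{findley}, Simon \cite{Simon-ext}, and Totik \cite{totik}. The argument splits into three steps: a reduction of the microscopic scaling using Totik's Christoffel-function asymptotics, a normal-family estimate coming from Lubinsky's comparison principle, and the identification of any subsequential limit as the sine kernel.

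First I would use Proposition \ref{prop:totik}: the hypotheses on $\mu$ and $I$ give
\[
\lim_{n\to\infty}\frac{K_n(x,x)}{n}\,w(x)=\rho(x)
\]
uniformly for $x\in I$, with $\rho$ continuous and bounded away from $0$ on $I$. Consequently $\wti{K}_n(x,x)/n\to\rho(x)$ uniformly on $I$, and the scaling factor $1/\wti{K}_n(x,x)$ is comparable to $1/n$ uniformly in $x\in I$. It therefore suffices to prove that
\[
\Phi_n(x;a,b):=\frac{\wti{K}_n\!\left(x+\tfrac{a}{\wti{K}_n(x,x)},\,x+\tfrac{b}{\wti{K}_n(x,x)}\right)}{\wti{K}_n(x,x)}
\]
converges to $\sin\pi(b-a)/(\pi(b-a))$ uniformly for $x\in I$ and $(a,b)$ in compact subsets of $\bbR^2$.

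Next, to show compactness of the family $\{\Phi_n(x;\cdot,\cdot)\}$, I would invoke Lubinsky's comparison principle based on the variational characterization of the Christoffel function, $K_n(x,x)^{-1}=\min_{\deg P<n,\,P(x)=1}\int|P|^2\,d\mu$. On arbitrarily small intervals around each $x\in I$ one sandwiches $\mu$ between multiples of a reference measure for which universality is already known (e.g.\ a Legendre-type weight on a small sub-interval of $I$). Combined with the previous step, this produces uniform bounds $|\Phi_n(x;a,b)|\le C(R)$ for $|a|,|b|\le R$ and diagonal convergence $\Phi_n(x;a,a)\to 1$, uniformly for $x\in I$. The Christoffel-Darboux formula \eqref{eq:CDForm} then shows that $\Phi_n(x;\cdot,\cdot)$ extends to an entire function of exponential type $\pi$ in each variable, uniformly bounded on compact sets, so by Montel's theorem $\{\Phi_n(x_n;\cdot,\cdot)\}$ is a normal family along any sequence $x_n\in I$. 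Any subsequential locally uniform limit $\Phi_\infty(a,b)$ is then Hermitian, entire of exponential type $\pi$ in each variable, satisfies $\Phi_\infty(a,a)=1$, and inherits from the reproducing property \eqref{eq:RepCD} a reproducing-kernel structure on the Paley-Wiener space of functions of exponential type $\pi$. A classical rigidity argument forces $\Phi_\infty(a,b)=\sin\pi(b-a)/(\pi(b-a))$; since every subsequential limit coincides with the sine kernel, a standard Arzela-Ascoli argument along sequences $x_n\in I$ promotes convergence to uniform convergence on $I$ times compact sets in $\bbR^2$.

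The main obstacle will be the comparison step: producing, at every $x\in I$ and at arbitrarily small spatial scales, reference measures whose OPE universality is already known, and transferring the resulting Christoffel-function bounds to $\mu$ \emph{uniformly} in $x\in I$. This is where the full strength of regularity (through Totik's theorem) and the local continuity and positivity of $w$ enter in an essential way. Once these bounds are secured, the remaining ingredients (Montel's theorem and the Paley-Wiener/de Branges rigidity identifying the sine kernel) are fairly standard.
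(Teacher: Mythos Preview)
The paper does not actually prove this proposition. Immediately after stating it, the authors write that it ``is essentially both in \cite{Simon-ext} and \cite{totik}'' and ``follows immediately from their results using Proposition~\ref{prop:totik}.'' In other words, the paper's ``proof'' is a citation: one quotes the universality theorems of Simon and Totik (which are stated with the scaling $1/n$ or $1/(n\rho(x))$) and then uses Totik's uniform Christoffel-function asymptotics (Proposition~\ref{prop:totik}) to replace that scaling by $1/\wti K_n(x,x)$ uniformly on $I$.

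Your proposal goes much further: you are sketching the content of the cited papers themselves, combining Lubinsky's comparison/localization method with the normal-families and reproducing-kernel rigidity argument. That outline is broadly faithful to how universality is established in \cite{lubinsky1, lubinskyUniv2, Simon-ext, totik}, and you correctly identify Proposition~\ref{prop:totik} as the input that makes everything uniform in $x\in I$. A couple of places are loose: the Christoffel--Darboux formula by itself does not give exponential type~$\pi$ --- that bound is produced by the comparison step and Bernstein-type inequalities, not by \eqref{eq:CDForm}; and the passage from the finite-$n$ reproducing identity \eqref{eq:RepCD} to a reproducing kernel on the Paley--Wiener space needs the uniform $L^2$ control that the comparison principle provides, not just the algebraic identity. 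These are exactly the points where the hard work in \cite{lubinsky1, lubinskyUniv2, Simon-ext, totik} lies, so for the purposes of this paper the right move is simply to cite those references and invoke Proposition~\ref{prop:totik}, as the authors do.
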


This proposition is essentially both in \cite{Simon-ext} and \cite{totik}. It is not phrased in precisely this fashion in either of the papers, but it follows immediately from their results using Proposition \ref{prop:totik}.

As for universality for varying measures, we quote the following result of Levin and Lubinsky \cite{LeLu}

\begin{proposition}[Levin-Lubinsky] \label{prop:varyinguniversal}
Let $\Sigma \subseteq \bbR$ be a finite union of intervals and suppose that $w(x)=e^{-Q(x)}$ is a continuous function on $\Sigma$ satisfying $\lim_{|x| \rightarrow \infty }|x| e^{-Q(x)} =0$ if $\Sigma$ is not compact. Let $\nu_Q$ be the unique minimizer of the functional
\beq \no
I_Q (\nu)=\iint \log\frac{1}{|x-y|} {\rm d}\nu(x){\rm d}\nu(y)+2\int Q(x) {\rm d}\nu(x)
\eeq
$($minimized over all probability measures with support in $\Sigma)$. Let $\dmu_n=w^{2n}(x){\rm d}x$ and let $\wti{K}_n(x,y)=w^{n}(x)w^n(y)K_n(x,y)$ be the corresponding Christoffel-Darboux kernel. 

Assume that on a neighborhood of an interval, $J$, lying in the interior of $\supp(\nu_Q)$, $\nu_Q$ is absolutely continuous and has a continuous and positive density, $\rho_Q(x)$. Assume also that $Q'$ is continuous on a neighborhood of $J$ as well. Then, uniformly for $x \in J$, $a,b \in$ compact subsets of $\bbR$
\beq \label{eq:lubconv2}
\frac{\wti{K}_n \left(x+\frac{a}{\wti{K}_n(x,x)},x+\frac{b}{\wti{K}_n(x,x)}\right)}{\wti{K}_n(x,x)} \rightarrow \frac{\sin \pi\left(b-a \right)}{\pi \left(b-a \right)}
\eeq
as $n \rightarrow \infty$. 
\end{proposition}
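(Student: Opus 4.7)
The plan is to follow the comparison strategy introduced by Lubinsky for fixed measures and extended by Levin and Lubinsky to the varying-weight setting. The guiding principle is a \emph{localization}: the microscopic asymptotics of the rescaled CD kernel at a bulk point $x\in J$ depend only on the local behaviour of $w^{2n}$ near $x$, together with the global normalization $\wti{K}_n(x,x)$ which is controlled by Totik's Proposition \ref{prop:totik1}. A direct Riemann-Hilbert/steepest-descent attack (as in \cite{DKMVZ1,DKMVZ2}) is available for real-analytic $Q$, but the present hypotheses only give continuity of $Q'$ on $J$, so the more flexible comparison route is the natural choice.

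First I would fix the scale. By Proposition \ref{prop:totik1}, $\wti{K}_n(x,x)/n\to \rho_Q(x)$ uniformly on $J$, so the microscopic spacing is effectively $1/(n\rho_Q(x))$. Continuity of $Q'$ near $J$ lets one factor out the varying exponential: for $u$ in a compact set of $\bbR$,
\[
w^{2n}\!\left(x + \tfrac{u}{\wti{K}_n(x,x)}\right) = w^{2n}(x)\, \exp\!\left(-\tfrac{2 n Q'(x)\,u}{\wti{K}_n(x,x)} + o(1)\right)
\]
uniformly in $x\in J$, so the varying part of the weight collapses, on the rescaled window, to a slowly varying factor that can be absorbed into the polynomial side. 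Next comes the comparison step: introduce a reference ensemble (for example, a fixed regular measure or a Jacobi-type weight after a local affine change of variables) whose equilibrium measure agrees with $\nu_Q$ near $x$ and for which the sine-kernel limit is already known. Using the extremal characterization
\[
\frac{1}{K_n(x,x)} = \inf\left\{\, \int |P|^2 \,{\rm d}\mu_n : \deg P \le n-1,\ P(x)=1 \,\right\}
\]
together with Markov-Bernstein type polynomial inequalities for varying weights, one shows that the Christoffel functions of the two ensembles agree to leading order at $x$. Lubinsky's operator inequality between CD kernels then upgrades this to an asymptotic matching of the full rescaled kernels on compacts; a normal families argument (the rescaled kernel is entire of exponential type in each variable) promotes pointwise convergence to uniform convergence in $a,b$ on compact subsets of $\bbR$.

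The main obstacle, and the principal new ingredient over earlier pointwise universality results, is achieving \emph{uniformity in} $x\in J$: one must run the Totik-type density estimate, the polynomial inequalities, and the reference-measure comparison simultaneously for every $x\in J$, with quantitative control on the error terms. A secondary difficulty arises in the non-compact case when $\Sigma$ is unbounded; here the decay assumption $|x|e^{-Q(x)}\to 0$ is used to show that the mass of the relevant Christoffel functions concentrates quantitatively near $\supp(\nu_Q)$, so that replacing $\Sigma$ by a sufficiently large compact subinterval produces only super-polynomially small corrections and the compact-support analysis carries over essentially unchanged.
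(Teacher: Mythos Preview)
The paper does not prove Proposition \ref{prop:varyinguniversal}; it is quoted verbatim as a result of Levin and Lubinsky \cite{LeLu} and used as a black box (together with Proposition \ref{prop:totik1}) to deduce Corollary \ref{cor:varyingLLLN}. So there is no proof in the paper to compare your proposal against.

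That said, your sketch is a faithful outline of the actual Levin--Lubinsky argument: localize the varying weight using continuity of $Q'$, compare Christoffel functions with a reference measure via the extremal characterization and polynomial inequalities, invoke Lubinsky's kernel inequality to transfer sine-kernel asymptotics, and use normal-families arguments for uniformity in $a,b$. Your identification of the two main technical hurdles (uniformity in $x\in J$ and the reduction to compact $\Sigma$) is accurate. If your task was to reproduce the paper's own proof, the correct answer is simply that the paper does not give one; if your task was to sketch a proof of the proposition itself, your outline is sound but would need substantial fleshing out of the polynomial-inequality and comparison steps to be a complete argument.
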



\section{A general concentration inequality} \label{concentration}

Our first result  is a general concentration inequality for linear statistics for determinantal point processes  that have a self-adjoint projection operator as  a kernel. For completeness, we state the result in a somewhat general context. We will apply it later to OPE's.

Let $\Lambda$ be a locally compact Polish space and $\lambda$ a positive Radon measure on $\Lambda$. Let $K:\Lambda \times \lambda \to \bbC$ be a measurable function such that $K(y,x)=\overline{K(x,y)}$ and 
$$\int K(x,y) K(y,z) {\rm d}\lambda (y)=K(x,z),$$
for $x,z\in \Lambda$. In other words, the associated integral operator is a self-adjoint projection on $\mathbb{L}_2(\lambda)$. Then it is known \cite{HKPV,Sosh} that $K$ defines a determinantal point process on $\Lambda$ with reference measure $\lambda$. Let us assume for simplicity that $K$ is also of finite rank $N<\infty$ (which we have in the case of OPE). Then for any bounded function $\phi$ we have 
\begin{multline}\label{eq:determinantalgeneral}
\EE \left[ \prod (1+\phi(x_j)\right] \\=\sum_{k=0}^N \frac{1}{k!} \int \dots \int \phi(x_1)\cdots \phi(x_k) \det \left(K(x_i,x_j)\right)_{i,j=1}^k {\rm d}\lambda(x_1) \cdots {\rm d} \lambda (x_k).
\end{multline}
(In case $N=\infty$ this identity still holds but conditions on $\phi$ are needed to make sure that it makes sense. For example, if $K$ is locally square integrable, then \eqref{eq:determinantalgeneral} holds for bounded $\phi$ with compact support.)

The following theorem tells us that the linear statistics for determinantal point processes  with self-adjoint projection kernels have subexponential tails. A particular consequence of the theorem is  that the fluctuations can be bounded in terms of the variance only, which will be our primary focus in the following sections.

\begin{theorem}\label{th:generalexponentialbounds}
 Let $\Lambda$ be a locally compact Polish space and $\lambda$ a positive Radon measure on $\Lambda$. Let $K:\Lambda \times \lambda \to \bbC$ be a measurable function such that the associated integral operator is a self-adjoint projection  of finite rank  (i.e. $K^2=K$ and $K^*=K$). Then for any  bounded $f$   we have that the linear statistic $X_f$ satisfies
\beq \label{eq:generalexponentialbounds}
P(\left|X_f-\EE X_f \right|\geq \eps) \leq \begin{cases}
 2\exp \left(-  \frac{\eps^2 }{4 A \Var X_f}\right), &\text{if } \eps <\frac{2 A \Var X_f}{3 \|f\|_\infty} \\
2\exp \left(-  \frac{\eps }{6 \|f\|_\infty} \right), &\text{if } \eps \geq \frac{2A \Var X_f}{3 \|f\|_\infty }
\end{cases}\eeq
where  $A>0$ is a constant that  does not depend on $f,K$ or $\eps$. 
\end{theorem}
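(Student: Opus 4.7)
The plan is to control the moment generating function $\varphi(t) := \EE[\exp(t(X_f - \EE X_f))]$ by a Bernstein-type estimate and then invoke the standard Chernoff argument. Starting from \eqref{eq:fredholm2} applied with $\phi = e^{tf}-1$ (bounded whenever $f$ is), we have
$$\varphi(t) = e^{-t\Tr(fK)} \det\!\left(I + (e^{tf}-1)K\right).$$
Using the projection property $K = K^2 = K^*$ together with the identity $\det(I + AB) = \det(I + BA)$, one may rewrite $\det(I + gK) = \det(I + KgK)$ for $g = e^{tf}-1$. The operator $KgK$ is self-adjoint of rank at most $N$, so it has real eigenvalues $\mu_1,\ldots,\mu_N$ satisfying $|\mu_j| \leq \|g\|_\infty \leq e^{|t|\|f\|_\infty} - 1$, and $\log\det(I + gK) = \sum_{j=1}^N \log(1 + \mu_j)$.

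The idea is then to expand $\log(1+\mu_j)$ as a Taylor series in $\mu_j$ and $g = e^{tf}-1$ as a Taylor series in $t$, and regroup in powers of $t$. The constant and linear-in-$t$ contributions combine to exactly $t\Tr(fK) = t\EE X_f$, which cancels the corresponding term in $\log\varphi$; the quadratic contribution equals $\tfrac{t^2}{2}\Var X_f$ by \eqref{eq:variance2}. The target is a Bernstein bound of the form
$$\log \varphi(t) \leq \frac{A t^2 \Var X_f}{1 - c\|f\|_\infty |t|}, \qquad |t| < \frac{1}{c\|f\|_\infty},$$
for universal constants $A, c > 0$.

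Carrying out this last step is the principal technical obstacle. The $m$-th term in the expansion involves cyclic traces of the form $\Tr(f^{k_1}K f^{k_2}K \cdots f^{k_m}K)$ with $k_1 + \cdots + k_m = m$, $k_i \geq 1$. The naive pointwise bound $|f^{k_i}| \leq \|f\|_\infty^{k_i}$ yields something proportional to $\|f\|_\infty^{m}\Tr(K) = N\|f\|_\infty^{m}$, which scales with the rank $N$ and is therefore useless. To extract a factor of $\Var X_f$ from each such trace, one must exploit the reproducing identity \eqref{eq:RepCD} together with $K^2 = K$ to rearrange the kernel integrals in such a way that a factor of the form $(f(x)-f(y))^2 |K(x,y)|^2$ naturally emerges---mirroring the representation \eqref{eq:variance2} of the variance---while the remaining $m-2$ factors of $f$ are bounded pointwise by $\|f\|_\infty$. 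Combined with the elementary estimate $|e^{tf}-1| \leq |t|\|f\|_\infty e^{|t|\|f\|_\infty}$ and geometric summation over $m$, this produces the stated bound; the specific constant $A = 2e^2\sum_{m\geq 0}(e/3)^m(m+2)^{3/2}$ of Remark \ref{rem:constant} reflects a deliberately non-sharp combinatorial bookkeeping that keeps the algebra tractable.

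Once the Bernstein-type estimate on $\log \varphi$ is in hand, the conclusion is routine. Markov's inequality gives $\bbP(X_f - \EE X_f \geq \eps) \leq \exp\!\left(-t\eps + \log\varphi(t)\right)$, and one optimizes over $t \in [0, 1/(c\|f\|_\infty))$. When $\eps < 2A\Var X_f/(3\|f\|_\infty)$, the optimizer is an interior point with $t \propto \eps/\Var X_f$, yielding the subgaussian bound $\exp(-\eps^2/(4A\Var X_f))$; when $\eps$ exceeds this threshold, the optimizer sits at the boundary $t \approx 1/(c\|f\|_\infty)$, giving the exponential bound $\exp(-\eps/(6\|f\|_\infty))$. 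Applying the same argument to $-f$ (equivalently to $t < 0$) handles the lower tail and produces the prefactor $2$ in \eqref{eq:generalexponentialbounds}.
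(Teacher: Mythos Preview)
Your approach is the paper's: expand $\log\det(1+(e^{tf}-1)K)$ in powers of $t$, show that after subtracting $t\,\Tr fK$ each surviving term carries a factor of $\Var X_f=\tfrac12\|[f,K]\|_2^2$, and then run Chernoff. The paper packages the analytic step as Lemma~\ref{lem:boundonmoment}, obtaining the slightly simpler bound $|\log\det(1+(e^{tf}-1)K)-t\,\Tr fK|\le A t^2\Var X_f$ on the interval $|t|\le 1/(3\|f\|_\infty)$ rather than your Bernstein form; either yields \eqref{eq:generalexponentialbounds} after optimizing $t$.

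The one step you leave vague---how to pull the variance factor out of $\Tr(f^{l_1}K\cdots f^{l_j}K)$---is exactly what the paper's lemma supplies, and it requires an idea your sketch does not name. One cannot extract a commutator directly from a single cyclic trace (indeed $\Tr f^mK$ alone is of order $\operatorname{rank}K$). The trick is that the scalar coefficients $\sum_{j}\frac{(-1)^{j+1}}{j}\sum_{l_1+\cdots+l_j=m}\frac{1}{l_1!\cdots l_j!}$ vanish for $m\ge 2$ (from $x=\log(1+(e^x-1))$), so one may subtract $\Tr f^mK$ from every trace in the $t^m$ coefficient for free. The \emph{difference} $\Tr(f^{l_1}K\cdots f^{l_j}K)-\Tr f^mK$ is then bounded by $jm^2\|f\|_\infty^{m-2}\|[f,K]\|_2^2$ via the telescoping identity $f^{l_1}K\cdots f^{l_j}K=f^{l_1}K\cdots f^{l_{j-1}+l_j}K+f^{l_1}K\cdots f^{l_{j-2}}K\,[f^{l_{j-1}},K][f^{l_j},K]$ (using $K^2=K$) together with $\|[f^l,K]\|_2\le l\|f\|_\infty^{l-1}\|[f,K]\|_2$. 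Summing over $m$ with the crude bound $\sum_j\sum\frac{1}{l_1!\cdots l_j!}\le m^m/m!\le e^m/\sqrt{2\pi m}$ gives the constant in Remark~\ref{rem:constant}.
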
 

\begin{remark}
By the discussion in Section 2.1, Theorem \ref{th:generalexponentialbounds} applies in particular to OPE's.
\end{remark}

The assumption that $K$ is of finite rank is made for simplicity and is not used in the proof in an essential way. In a more general formulation, one needs an assumption on $f$ to make sense of \eqref{eq:determinantalgeneral} with $\phi= e^f-1$. While the proof we present here is for self-adjoint projection kernels, it can be adjusted to hold in a more general context. For example, in \cite{DJ} a similar statement is proved  for a  case where the kernel is only approximately a projection operator and no longer self-adjoint. 

Theorem \ref{th:generalexponentialbounds} follows from the following lemma on Fredholm determinants that we believe is interesting in its own right. While we formulate the statement in a more general setting, it might help to keep in mind our original context, in which the Hilbert space is $\mathbb L_2(\lambda)$, $K$ is a finite rank self-adjoint projection, and $h$ is the operator of multiplication by the function $f$.

\begin{lemma}\label{lem:boundonmoment}
Let $K$ be a self-adjoint projection operator (i.e.\ $K^2=K$ and $K^*=K$) on a separable Hilbert space. Let $h$ be a bounded operator such that $hK$ and $Kh$ are of trace class.  Then there exists a constant $A>0$ such that for any bounded function $h$ we have
\beq \label{eq:boundonmoment}
\left|\log \det(1+({\rm e}^{ t h}-1)K)- t\Tr h K\right|\leq \frac12 A |t|^2 \left\|[h,K]\right\|_2^2,
\eeq
for $|t|\leq \frac{1}{3 \|h\|_\infty}$. Here $\|\cdot\|_2$ denotes the Hilbert-Schmidt norm, $\|\cdot\|_\infty$ denotes the operator norm, and $[h,K]=hK-Kh$ stands for the commutator of $h$ and $K$. 
\end{lemma}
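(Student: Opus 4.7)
The plan is to prove the bound by applying Taylor's theorem to $F(t) := \log\det(I+(e^{th}-I)K) - t\,\Tr(hK)$. It suffices to show that $F(0)=F'(0)=0$ and that $|F''(s)|\le A\,\|[h,K]\|_2^2$ uniformly for $|s|\le 1/(3\|h\|_\infty)$, since then the integral form of the Taylor remainder gives $|F(t)|\le \tfrac{t^2}{2}\sup_{|s|\le|t|}|F''(s)|\le \tfrac{A}{2}t^2\|[h,K]\|_2^2$.

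The crucial algebraic input is a \emph{projection identity} arising from $K^2=K$. With $R(s) := (I+(e^{sh}-I)K)^{-1}$, the calculation $(I+(e^{sh}-I)K)\,K = e^{sh}K$ yields $R(s)\,e^{sh}K = K$. Jacobi's formula for the log-determinant, combined with $[h,e^{sh}] = 0$ and $hK = Kh + [h,K]$, then gives the clean expression
\[
F'(s) \;=\; \Tr\bigl(R(s)he^{sh}K\bigr)-\Tr(hK) \;=\; \Tr\bigl(R(s)e^{sh}[h,K]\bigr),
\]
in which one factor of $[h,K]$ is already exhibited. Since the trace of any commutator vanishes, $F'(0)=\Tr([h,K])=0$.

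Differentiating once more (using $R'(s)=-R(s)he^{sh}K\,R(s)$) and reapplying both the projection identity and the off-diagonal identities $[h,K](I-K)=K[h,K]$ and $(I-K)[h,K]=[h,K]K$ (all of which follow from $K^2=K$), one can rearrange $F''(s)$ into
\[
F''(s) \;=\; -\tfrac12\|[h,K]\|_2^2 \;+\; \Tr\bigl([R(s),h]\,e^{sh}[h,K]\bigr) \;-\; \Tr\bigl(R(s)e^{sh}[h,K]\,R(s)e^{sh}[h,K]\bigr).
\]
The last term manifestly contains two factors of $[h,K]$. For the middle one, expanding $R(s)=\sum_{k\ge 0}(-(e^{sh}-I)K)^k$ and using $[h,(e^{sh}-I)K]=(e^{sh}-I)[h,K]$ (valid because $h$ commutes with $e^{sh}$) shows that every term of $[R(s),h]$ also carries a factor of $[h,K]$, so two commutator factors appear there as well. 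For $|s|\|h\|_\infty \le 1/3$ one has $\|(e^{sh}-I)K\|_\infty \le e^{1/3}-1 < 1$, so the Neumann series for $R(s)$ converges with $\|R(s)\|_\infty \le (2-e^{1/3})^{-1}$; applying the Hilbert--Schmidt bound $|\Tr(AB)|\le\|A\|_2\,\|B\|_2$ to each piece and summing the resulting geometric series gives $|F''(s)|\le A\|[h,K]\|_2^2$. The explicit constant $A=2e^2\sum_{m\ge 0}(e/3)^m(m+2)^{3/2}$ from Remark~\ref{rem:constant} would emerge from counting the combinatorial number of placements of the commutator factor when $e^{sh}$ and $(e^{sh}-I)$ are further expanded in their power series.

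The main obstacle is the algebraic rearrangement of $F''(s)$ that produces the two commutator factors. Conceptually this is natural: if $[h,K]=0$, then $Ke^{sh}K=e^{sh}K$ on $\mathrm{ran}(K)$ and $F$ vanishes identically, so only powers of $[h,K]$ can survive. The technical challenge is to realize this as an \emph{exact} cancellation of all non-quadratic-in-$[h,K]$ contributions to $F''(s)$, using only the projection identity $R(s)e^{sh}K=K$, cyclicity of the trace, and the fact that $\Tr$ kills commutators.
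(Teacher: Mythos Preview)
Your approach is correct and genuinely different from the paper's. The paper expands $\log\det(1+(e^{th}-1)K)$ as a full double power series in $t$ (first the $\log(1+\cdot)$ series, then the Taylor series of $e^{th}$), uses the combinatorial identity coming from $x=\log(1+(e^x-1))$ to cancel the linear term, and then proves the key trace estimate
\[
\bigl|\Tr h^{l_1}K\cdots h^{l_j}K-\Tr h^{m}K\bigr|\le jm^{2}\|h\|_\infty^{m-2}\|[h,K]\|_2^{2}
\]
by induction on $j$, reducing products of $K$'s one at a time via the identity $h^{a}K h^{b}K = h^{a+b}K + [h^{a},K][h^{b},K]K$ (valid because $K^2=K$). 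Summing the resulting series gives the stated bound with the rather large constant $A=2e^{2}\sum_{m\ge 0}(e/3)^{m}(m+2)^{3/2}$.

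Your Taylor-remainder route is more direct: the projection identity $R(s)e^{sh}K=K$ lets you extract one factor of $[h,K]$ from $F'(s)$ immediately, and a second one from $F''(s)$ after the commutator manipulation you describe. The Neumann-series bound on $\|[R(s),h]\|_2$ then closes the argument without any further power-series expansion; in particular the constant you actually obtain this way is of order $10$, much smaller than the paper's, so your final sentence about recovering the paper's explicit $A$ by ``further expanding $e^{sh}$'' is unnecessary and somewhat misleading---your method is already sharper. One small correction: the constant term in your displayed formula for $F''(s)$ should be $-\tfrac12\Tr[h,K]^2$ (which equals $+\tfrac12\|[h,K]\|_2^2$ when $h$ is self-adjoint, as in the application), not $-\tfrac12\|[h,K]\|_2^2$; this sign slip is harmless for the absolute-value bound. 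What your approach buys is brevity and a better constant; what the paper's approach buys is an explicit term-by-term control of the cumulant expansion, which can be useful if one later wants to identify the limiting fluctuations rather than merely bound them.
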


\begin{proof}
By the assumption on $t$ we have  a bound on the operator norm $\|({\rm e}^{t h}-1) K\|_\infty<1$. Hence we can rewrite the determinant as a sum of traces 
\beq \no
\begin{split}
\det(1+({\rm e}^{t h}-1) K)&=\exp \Tr \log (1+({\rm e}^{t h}-1) K)\\
&=\exp \sum_{j=1}^\infty \frac{(-1)^{j+1}}{j} \Tr \left( ({\rm e}^{t h}-1) K\right)^j.
\end{split}
\eeq
By expanding the exponential in a Taylor series we obtain
\begin{align}\no
\det(1+({\rm e}^{t h}-1) K)=\exp \sum_{j=1}^\infty \frac{(-1)^{j+1}}{j}  \sum_{l_1,\ldots, l_j =1}^{\infty} t^{l_1+\cdots +l_j}\frac{\Tr  h ^{l_1} K \cdots h^{l_j} K}{l_1!\cdots l_j!}, 
\end{align}
which, by an extra reorganization, can be turned into
\begin{align}\label{eq:standard:)}
\log \det(1+({\rm e}^{t h}-1) K)=\sum_{m=1}^\infty t^m\sum_{j=1}^m \frac{(-1)^{j+1}}{j}  \sum_{\overset{l_1+\cdots +l_j=m}{l_i\geq 1}} \frac{\Tr  h^{l_1} K \cdots  h^{l_j} K}{l_1!\cdots l_j!}.
\end{align}
This identity is our starting point. 

First note that
\[\sum_{j=1}^m \frac{(-1)^{j+1}}{j}  \sum_{\overset{l_1+\cdots +l_j=m}{l_i\geq 1}} \frac{1}{l_1!\cdots l_j!}=0 , \qquad m\geq 2,\]
which follows from $x=\log (1+({\rm e}^{x}-1))$ by expanding the logarithm and exponential. 
Thus,
\beq \no
t \Tr hK=\sum_{m=1}^\infty t^m\sum_{j=1}^m \frac{(-1)^{j+1}}{j}  \sum_{\overset{l_1+\cdots +l_j=m}{l_i\geq 1}} \frac{\Tr  h^{m} K}{l_1!\cdots l_j!}.
\eeq
It follows that 
\begin{multline}\label{eq:boundonlinstatstep2}
\log \det(1+({\rm e}^{t h}-1) K)-t \Tr hK\\
=\sum_{m=1}^\infty t^m\sum_{j=1}^m \frac{(-1)^{j+1}}{j}  \sum_{\overset{l_1+\cdots +l_j=m}{l_i\geq 1}} \frac{\Tr  h^{l_1} K \cdots  h^{l_j} K -\Tr  h^m K}{l_1!\cdots l_j!} \\
=\sum_{m=2}^\infty t^m\sum_{j=2}^m \frac{(-1)^{j+1}}{j}  \sum_{\overset{l_1+\cdots +l_j=m}{l_i\geq 1}} \frac{\Tr  h^{l_1} K \cdots  h^{l_j} K -\Tr  h^m K}{l_1!\cdots l_j!},
\end{multline}
since the $j=1$ term always vanishes.

We claim that
\begin{align}\label{eq:claimdifftraces}
\left|\Tr h^{l_1} K \cdots  h^{l_j} K-\Tr  h^m K \right| \leq j m^2 \|  h\|_\infty^{m-2}   \|[  h,K]\|_2^2,
\end{align}
for any $l_1,\ldots,l_j$ and $m\geq 2 $ such that $l_1+\cdots+l_j=m$ and $l_i\geq 1$. Note that $hK$ and $Kh$ by assumption are of trace class and hence also Hilbert-Schmidt operators so that the right-hand side is finite. 

To prove \eqref{eq:claimdifftraces}, first fix $j=2$. A straightforward computation using properties of the trace and the fact that $K^2=K$ shows that 
\begin{align}\no
\Tr  h^{l_1} K  h^{l_2}K =\Tr h^m K +\frac{1}{2} \Tr [  h^{l_1},K] [  h^{l_2},K].
\end{align}
Hence we have 
\begin{align}\label{eq:estimatedifftraces}
\left|\Tr  h^{l_1} K h^{l_2}K -\Tr h^m K \right| \leq \|  [h^{l_1},K]\|_2\| [  h^{l_2},K]\|_2.
\end{align}
By writing 
\beq \label{eq:estimateonHScom}
[h^{l},K]=\sum_{j=1}^{l} h^{l-j}[h,K]h^{j-1}
\eeq
we see that
\beq \no
 \|  [h^{l},K]\|_2^2\leq  l^2 \|  h\|_\infty^{2(l-1)}  \|  [h,K]\|_2^2,
\eeq
which, together with \eqref{eq:estimatedifftraces}, implies 
\begin{align}\label{eq:estimatedifftraces2}
\left|\Tr  h^{l_1} K  h^{l_2}K -\Tr  h^m K \right|\leq \frac{1}{2}  l_1 l_2 \|  h\|_\infty^{m-2} \|  [h,K]\|_2^2.
\end{align}
Since $l_1,l_2\leq m$, we see that \eqref{eq:claimdifftraces} follows (in fact, with a slightly better estimate) for $j=2$.

To  prove \eqref{eq:claimdifftraces} for $j\geq 3$ we use the following identity, which can be verified using solely $K^2=K$ (we need $j \geq 3$ here),
\beq\no
  h^{l_1} K \cdots  h^{l_j} K =  h^{l_1} K \cdots  h^{l_{j-1}+l_j} K+h^{l_1} K \cdots  h^{l_{j-2}}  K [  h^{l_{j-1}},K][  h^{l_j},K].
\eeq 
This implies
\begin{multline} \no
\left|\Tr  h^{l_1} K \cdots  h^{l_j} K -\Tr  h^{l_1} K \cdots  h^{l_{j-1}+l_j} K
\right|\\
\leq \|  h\|_\infty^{l_1+\cdots +l_{j-2}} \|K\|^{j-2}_\infty \|[  h^{l_{j-1}},K]\|_2\|[  h^{l_j},K]\|_2.
\end{multline}
By using \eqref{eq:estimateonHScom}, $\|K\|_\infty=1$ and the fact that $l_{j-1},l_j\leq m$ we obtain 
\beq \no
\left|\Tr  h^{l_1} K \cdots  h^{l_j} K -\Tr  h^{l_1} K \cdots  h^{l_{j-1}+l_j} K
\right|
\leq m^2\|  h \|_\infty^{m-2} \|[  h,K]\|_2^2.
\eeq 
By iterating this inequality we arrive at \eqref{eq:claimdifftraces}.
\bigskip

Plugging \eqref{eq:claimdifftraces} into \eqref{eq:boundonlinstatstep2} we can estimate
\begin{multline}\no
\left| \log \det(1+({\rm e}^{t h}-1) K)- t \Tr   h K\right| \\
\leq \| [  h,K]\|_2^2  \sum_{m=2}^\infty |t|^m \|  h\|_\infty^{m-2}  m^2 \sum_{j=2}^m \sum_{\overset{l_1+\cdots +l_j=m}{l_i\geq 1}} \frac{1}{l_1!\cdots l_j!}.
\end{multline}
Now,
\begin{align}
 \sum_{j=2}^m \sum_{\overset{l_1+\cdots +l_j=m}{l_i\geq 1}} \frac{1}{l_1!\cdots l_j!}<\frac{m^m}{m!}\leq \frac{{\rm e}^m}{\sqrt{2 \pi m}},
\end{align}
(which follows by expanding $m^m=(1+\cdots+1)^m$ and using a standard estimate on $m!$). Hence 
\begin{multline}\no
\left| \log\det(1+({\rm e}^{t h}-1) K)-t \Tr   h K\right| 
\leq \| [  h,K]\|_2^2  \sum_{m=2}^\infty |t|^m \|  h\|_\infty^{m-2}  {\rm e}^m m^{3/2}\\
\leq |t|^2 \| [  h,K]\|_2^2 {\rm e}^2\sum_{m=0}^\infty ({\rm e}/3)^m (m+2)^{3/2}. 
\end{multline}

We have thus shown that there exists a constant, $A$ (independent of $h$ and  $K$), such that
\begin{align} \label{eq:boundonlinstatfinalstep} 
\left| \log \det(1+({\rm e}^{t h}-1) K)- t \Tr   h K\right|
\leq  \frac12 A |t|^2 \| [  h,K]\|_2^2  
\end{align}
 for $|t|\leq (3 \|h\|_\infty)^{-1}$. Moreover, we see that the constant $A$ can be taken to be $2{\rm e}^2\sum_{m=0}^\infty ({\rm e}/3)^m (m+2)^{3/2}$ (see also Remark \ref{rem:constant}). This finishes the proof.
\end{proof}

We are now ready to prove Theorem \ref{th:generalexponentialbounds}.

\begin{proof}[Proof of Theorem \ref{th:generalexponentialbounds}]
We use the exponential version of the Chebyshev's inequality for (real) random variables $Y$
\begin{align}\no
P(Y\geq \eps)\leq {\rm e}^{-t \eps} \mathbb E[{\rm e}^{t Y}],
\end{align}
for $\eps,t>0$, which implies
\begin{align} \label{eq:markovtwosided}
P(|Y|\geq \eps)\leq {\rm e}^{-t \eps}\left( \mathbb E[{\rm e}^{t Y}]+\EE[{\rm e}^{-t Y}]\right),
\end{align}
for $\eps,t>0$.  

We want to apply \eqref{eq:boundonmoment} with the following setup: We take $\mathbb L_2(\lambda)$ for the Hilbert space, $K$ is the self-adjoint  projection kernel which is of finite rank and hence also trace class, and $h$ is the multiplication operator with multiplier $f$.

First note that under these assumptions, for the linear statistic $X_f$,
\begin{align}\no
\EE X_f &=\Tr f K,\\
 \Var X_f &= \frac{1}{2} \| [f,K]\|_2^2,
\end{align}
which, like \eqref{eq:trace} and \eqref{eq:variance2}, follow from \eqref{eq:fredholm2}. In addition, by taking $\phi={\rm e}^{tf}-1$ in  \eqref{eq:fredholm2} we obtain
$$\EE \left[\exp t X_f\right]= \det(1+({\rm e}^{t f}-1) K_n,$$ so that
$$\EE \left[\exp t(X_f-\EE X_f) \right]=\left( \det(1+({\rm e}^{t f}-1) K_n\right) \exp(-t \Tr fK).$$

Applying \eqref{eq:markovtwosided} with $Y=X_f-\EE X_f$ and using \eqref{eq:boundonmoment}  we obtain 
\beq \no
\mathbb P(|X_{f}-
\EE X_f|\geq \eps) \leq 2 {\rm e}^{-\eps t + A t^2 \Var X_f}.
\eeq 
for $|t| \leq 1/(3 \|f\|_{\infty})$. By choosing $$t=\min \left(\frac{\eps}{2 A \Var X_f},\frac{1}{3 \|f\|_\infty}\right),$$
we obtain the statement.
\end{proof}

To apply Theorem \ref{th:generalexponentialbounds} in particular cases it is important  to control the variance. For processes with kernels that are self-adjoint projections there is a very general and crude bound for the variance, which leads to the following result (which is Theorem \ref{thm:LLN} in a more general context). 

\begin{theorem}\label{th:generalexponentialboundswithvariance}
 Let $\Lambda$ be a locally compact Polish space and $\lambda$ a positive Radon measure on $\Lambda$. Let $K:\Lambda \times \lambda \to \bbC$ be a measurable function such that the associated integral operator is a self-adjoint projection  of finite rank. Then for any  bounded $f$   we have that the linear statistic $X_f$ satisfies
 \beq \label{eq:generalvariance}
 \Var X_f\leq 2 r(K) \|f\|_\infty^2,
 \eeq
 where $r(K)$ is the rank of $K$. In particular, 
\beq \label{eq:generalexponentialboundswithvariance}
P(\left|X_f-\EE X_f \right|\geq \eps) \leq \begin{cases}
 2\exp \left(-  \frac{\eps^2 }{8 A r(K) \|f\|_\infty^2}\right), &\text{if } \eps <\frac{4 A  r(K) \|f\|_\infty}{3} \\
2\exp \left(-  \frac{\eps }{6 \|f\|_\infty} \right), &\text{if } \eps \geq \frac{4 A r(K) \|f\|_\infty}{3 }
\end{cases}\eeq
where  $A>0$ is a constant that  does not depend on $f,K$ or $\eps$. 
\end{theorem}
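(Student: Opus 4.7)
The plan is to first establish the variance bound \eqref{eq:generalvariance} and then substitute it into the concentration inequality \eqref{eq:generalexponentialbounds} of Theorem \ref{th:generalexponentialbounds}. Since Theorem \ref{th:generalexponentialbounds} is already at hand, the bulk of the work is in obtaining the variance estimate, and the only delicate point afterwards is reconciling the case-split of \eqref{eq:generalexponentialbounds} with the case-split of \eqref{eq:generalexponentialboundswithvariance}. I do not foresee a genuine obstacle: the variance step is a soft computation and the concentration step is bookkeeping with elementary inequalities, with the only careful moment being the regime-matching in the final step.

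For the variance bound, I would start from the operator identity $\Var X_f = \tfrac12 \|[f,K]\|_2^2$ noted in the proof of Theorem \ref{th:generalexponentialbounds} (which holds in this abstract setting by the same reasoning used for \eqref{eq:variance2}). Expanding the Hilbert-Schmidt norm of the commutator in coordinates, and using that $f$ is real and $K(y,x)=\overline{K(x,y)}$, gives
$$\Var X_f = \frac{1}{2}\iint (f(x)-f(y))^2 |K(x,y)|^2 \,{\rm d}\lambda(x)\,{\rm d}\lambda(y).$$
The crude bound $(f(x)-f(y))^2 \leq 4\|f\|_\infty^2$ then yields $\Var X_f \leq 2\|f\|_\infty^2\|K\|_2^2$. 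Finally, since $K$ is a finite-rank self-adjoint projection, its Hilbert-Schmidt norm satisfies $\|K\|_2^2 = \Tr(K^*K) = \Tr(K^2) = \Tr(K) = r(K)$, which produces \eqref{eq:generalvariance}.

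For the concentration inequality, I would insert $\Var X_f \leq 2r(K)\|f\|_\infty^2$ into \eqref{eq:generalexponentialbounds}. If $\eps \geq \tfrac{4Ar(K)\|f\|_\infty}{3}$, then automatically $\eps \geq \tfrac{2A\Var X_f}{3\|f\|_\infty}$ and the linear bound $2\exp(-\eps/(6\|f\|_\infty))$ transfers directly. In the first case of \eqref{eq:generalexponentialboundswithvariance}, where $\eps < \tfrac{4Ar(K)\|f\|_\infty}{3}$, Theorem \ref{th:generalexponentialbounds} may still return either its Gaussian-type or its linear bound, depending on the location of $\eps$ relative to $\tfrac{2A\Var X_f}{3\|f\|_\infty}$. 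In the Gaussian-type sub-case, the monotonicity of $V \mapsto \exp(-\eps^2/(4AV))$ in $V$ lets us replace $\Var X_f$ by $2r(K)\|f\|_\infty^2$ and only weakens the bound to $2\exp(-\eps^2/(8Ar(K)\|f\|_\infty^2))$; in the linear sub-case, the assumption $\eps < \tfrac{4Ar(K)\|f\|_\infty}{3}$ is precisely equivalent to $\eps/(6\|f\|_\infty) \geq \eps^2/(8Ar(K)\|f\|_\infty^2)$, so the linear bound of Theorem \ref{th:generalexponentialbounds} still implies the desired Gaussian-type bound.
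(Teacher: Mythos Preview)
Your proposal is correct and follows essentially the same approach as the paper: compute $\|K\|_2^2=\Tr K=r(K)$, plug the crude bound $(f(x)-f(y))^2\le 4\|f\|_\infty^2$ into the variance formula, and then substitute into Theorem~\ref{th:generalexponentialbounds}. In fact you are more careful than the paper, which simply says ``by substituting this into \eqref{eq:generalexponentialbounds} we obtain \eqref{eq:generalexponentialboundswithvariance}'' without spelling out the case-matching you work through; your handling of the intermediate regime (where Theorem~\ref{th:generalexponentialbounds} returns the linear bound but \eqref{eq:generalexponentialboundswithvariance} asserts the Gaussian-type bound) is exactly the right observation.
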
 
\begin{proof}
Note that because $K$ has finite rank $r(K)$ and $K$ is a projection (so all eigenvalues are $0$ or $1$) we have
\beq
\iint K(x,y)^2 {\rm d}\lambda(y){\rm d}\lambda (x)=\int K(x,x) {\rm d}\lambda(x)=\Tr K=r(K).
\eeq 
By combining this with \eqref{eq:variance2} (which also holds for general determinantal point processes with a kernel that is a self-adjoint projection)  the conclusion for the variance \eqref{eq:generalvariance}  immediately follows. By substituting this into \eqref{eq:generalexponentialbounds} we obtain \eqref{eq:generalexponentialboundswithvariance}. 
\end{proof}

\begin{remark}
 The rank of $K$ equals the number of points in the process \cite[Th. 4]{Sosh}.
\end{remark}


\section{Simple bounds on the Variance} \label{sec:variance}

Theorem  \ref{th:generalexponentialbounds} reduces the problem of obtaining a concentration inequality for OPE to the problem of finding bounds on the variance. This section is devoted to obtaining such bounds for general measures $\mu$.
The first bound is the general bound \eqref{eq:generalvariance}, which is as weak as it is general, but combined with Theorem \ref{th:generalexponentialbounds}, it already implies Theorem \ref{thm:LLN}.

\begin{proposition} \label{prop:bounded}
For any bounded function, $f$, and any measure with finite moments, $\mu$,
\beq \label{eq:stupidboundvariance}
\Var X_f^{(n)} \leq 2 n \|f\|_\infty^2.
\eeq 
Moreover, the same bound holds for $X_{f,\alpha,x^*}^{(n)}$ for any $\alpha$ and any $x^*$, i.e.\
\beq \label{eq:stupidboundvariance1}
\Var X_{f,\alpha,x^*}^{(n)} \leq 2 n \|f\|_\infty^2.
\eeq
\end{proposition}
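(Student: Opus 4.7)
The plan is to observe that this proposition is essentially an immediate corollary of the variance bound \eqref{eq:generalvariance} in Theorem \ref{th:generalexponentialboundswithvariance}, specialized to the setting of Section \ref{DetPP}. Recall that the OPE is a determinantal point process on $\Lambda = \bbR$ with reference measure $\mu$ whose correlation kernel $K_n$ is the Christoffel--Darboux kernel. As noted after \eqref{eq:RepCD}, $K_n$ is the kernel of the projection in $L^2(\mu)$ onto the span of $\{1, x, \ldots, x^{n-1}\}$, so it is a self-adjoint projection of rank exactly $n$. Consequently \eqref{eq:generalvariance} applied with $r(K_n)=n$ yields \eqref{eq:stupidboundvariance} at once.

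Rather than quoting Theorem \ref{th:generalexponentialboundswithvariance}, one can equivalently give a direct one-line argument using \eqref{eq:variance2}: bound $(f(x)-f(y))^2 \leq 4\|f\|_\infty^2$ pointwise, then use the reproducing property \eqref{eq:RepCD} to collapse $\iint K_n(x,y)^2\, {\rm d}\mu(x)\,{\rm d}\mu(y) = \int K_n(x,x)\,{\rm d}\mu(x) = n$, where the last equality is just the fact that $K_n$ is a rank-$n$ projection (so its trace equals $n$).

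For the scaled statistic $X_{f,\alpha,x^*}^{(n)}$, the key observation is already flagged in the paragraph preceding Theorem \ref{thm:semiLLLN}: defining $\widetilde{f}(x) = f(n^\alpha(x-x^*))$, we have $X_{f,\alpha,x^*}^{(n)} = X_{\widetilde{f}}^{(n)}$ and $\|\widetilde{f}\|_\infty = \|f\|_\infty$. Thus \eqref{eq:stupidboundvariance1} follows from \eqref{eq:stupidboundvariance} applied to $\widetilde f$, with no dependence on $\alpha$ or $x^*$. Alternatively, this is visible directly from the formula \eqref{eq:AlphaVariance}, where the same pointwise estimate $(\widetilde f(x)-\widetilde f(y))^2 \leq 4\|f\|_\infty^2$ and reproducing-property collapse apply verbatim.

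There is no real obstacle here; the proposition is designed as a quick consequence of the preceding machinery, and the only thing to verify is that the rank of the CD kernel (equivalently, the number of points in the OPE) is $n$, which is immediate from the definition of $K_n$ as the projection onto polynomials of degree less than $n$.
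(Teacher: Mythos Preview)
Your proposal is correct and matches the paper's approach exactly: the paper does not give a separate proof of this proposition but simply notes (in the sentence introducing it) that it is the general bound \eqref{eq:generalvariance} specialized to the OPE, where the CD kernel is a rank-$n$ self-adjoint projection. Your direct argument via \eqref{eq:variance2} and the reproducing property is precisely the content of the proof of Theorem \ref{th:generalexponentialboundswithvariance}, and your handling of the scaled statistic via $\widetilde f$ is the same observation the paper makes before Theorem \ref{thm:semiLLLN}.
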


We can slightly improve this bound if we assume that $f$ is continuous.

\begin{proposition} \label{prop:contin}
Assume that $\mu$ has compact support and that $f$ is a bounded continuous function. Then
\beq \label{eq:variancecontinuous}
\Var X_f^{(n)} = o(n), \quad \text{ as } n\to \infty.
\eeq

The same conclusion holds for $\mu$ with non-compact support, if we restrict attention to uniformly continuous bounded $f$, and if we assume, in addition, that $\frac{1}{n}\left(\frac{\gamma_{n-1}}{\gamma_n}\right)^2 \rightarrow 0$ as $n \rightarrow \infty$.

Similarly, for a sequence of measures $\left\{\mu_n\right\}_{n=1}^\infty$, the same conclusion holds if we restrict attention to uniformly continuous bounded $f$, and if we assume, in addition, that $\frac{1}{n}\left(\frac{\gamma_{n-1}^{(n)}}{\gamma_n^{(n)}}\right)^2 \rightarrow 0$ as $n \rightarrow \infty$.
\end{proposition}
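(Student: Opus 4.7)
The natural starting point is the symmetric representation of the variance given in \eqref{eq:variance2},
\[
\Var X_f^{(n)} = \frac12 \iint (f(x)-f(y))^2 K_n(x,y)^2 \, {\rm d}\mu(x)\, {\rm d}\mu(y),
\]
together with the fact that $K_n$ is a rank-$n$ projection, so that
\[
\iint K_n(x,y)^2 \, {\rm d}\mu(x)\, {\rm d}\mu(y) = \Tr K_n = n.
\]
The plan is to exploit the heuristic that $K_n(x,y)^2\,{\rm d}\mu \otimes {\rm d}\mu$ is nearly concentrated on the diagonal: on the diagonal $(f(x)-f(y))^2$ is small by (uniform) continuity, and off the diagonal the Christoffel--Darboux formula \eqref{eq:CDForm} shows that $K_n(x,y)^2$ is integrable against a polynomial weight of size $\bigl(\gamma_{n-1}/\gamma_n\bigr)^2$.

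Concretely, I would fix $\delta>0$ and split the domain of integration into $\{|x-y|<\delta\}$ and $\{|x-y|\geq\delta\}$. On the near set, bounding $(f(x)-f(y))^2$ by $\omega_f(\delta)^2$, where $\omega_f$ is the modulus of continuity of $f$ (on $\supp\mu$ in the compact case, and on all of $\bbR$ in the uniformly continuous case), produces a contribution of at most $\tfrac12 \omega_f(\delta)^2 \, n$. On the far set I use \eqref{eq:CDForm} to write
\[
K_n(x,y)^2 \leq \frac{1}{\delta^2}\left(\frac{\gamma_{n-1}}{\gamma_n}\right)^{2}\bigl(p_n(x)p_{n-1}(y) - p_n(y)p_{n-1}(x)\bigr)^2,
\]
and expand the square; orthonormality of $p_n,p_{n-1}$ in $L^2(\mu)$ makes the cross term vanish and the two square terms each integrate to $1$, yielding the bound $\tfrac{2\|f\|_\infty^2}{\delta^2}\bigl(\gamma_{n-1}/\gamma_n\bigr)^2$ for the far contribution (after using $(f(x)-f(y))^2 \leq 4\|f\|_\infty^2$). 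Altogether,
\[
\frac{\Var X_f^{(n)}}{n} \leq \tfrac12 \omega_f(\delta)^2 + \frac{2\|f\|_\infty^2}{\delta^2\, n}\left(\frac{\gamma_{n-1}}{\gamma_n}\right)^{\!2}.
\]

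The conclusion is then obtained by a two-stage limit: let $n\to\infty$ first (using boundedness of $\gamma_{n-1}/\gamma_n$ in the compact case, or the hypothesis $\tfrac1n(\gamma_{n-1}/\gamma_n)^2\to 0$ in the non-compact or varying case) to kill the second term, and then let $\delta\to 0$ so that $\omega_f(\delta)\to 0$ by uniform continuity. The sequence of measures case is identical, with $\gamma_j$ replaced by $\gamma_j^{(n)}$ throughout. The main, and really only, potential subtlety is justifying uniform continuity off a compact set: this is precisely why the hypotheses are strengthened to ``uniformly continuous bounded'' $f$ when $\supp\mu$ is not compact, so there is no real obstacle — the estimate on the far set is robust provided $(\gamma_{n-1}/\gamma_n)^2 = o(n)$, exactly as assumed.
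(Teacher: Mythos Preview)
Your proposal is correct and follows essentially the same route as the paper's proof: split the variance integral \eqref{eq:variance2} into a near-diagonal piece controlled by uniform continuity and the trace identity $\iint K_n^2 = n$, and a far piece controlled via the Christoffel--Darboux formula \eqref{eq:CDForm} together with orthonormality of $p_n, p_{n-1}$, then take $n\to\infty$ followed by $\delta\to 0$. The only difference is cosmetic (you phrase the near bound via the modulus of continuity $\omega_f(\delta)$ rather than an explicit $\varepsilon$--$\delta$ choice), and your constant in the far bound is off by a harmless factor of $2$.
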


\begin{proof}
We start by treating the case of compactly supported $\mu$ and general bounded and continuous $f$.
Let $\varepsilon>0$ and let $1>\delta>0$ be such that if $x \in \supp(\mu)$ or $y \in \supp(\mu)$, and $|x-y|<\delta$ then $|f(x)-f(y)|<\varepsilon$ (since $[\min(\supp(\mu))-1,\max(\supp(\mu))+1]$ is compact, $f$ is uniformly continuous there). Now, write
\beq \label{variance3}
\begin{split}
& \Var\left(  X_{f}^{(n)}\right) \\ 
&=\frac{1}{2}\int \int \left(f(x)-f(y) \right)^2 K_n(x,y)^2 \dmu(y)\dmu(x) \\
&=\frac{1}{2}\int \dmu(x) \int_{|x-y|<\delta} \left(f(x)-f(y) \right)^2 K_n(x,y)^2 \dmu(y) \\
&\quad +\frac{1}{2}\int \dmu(x) \int_{|x-y| \geq \delta} \left(f(x)-f(y) \right)^2 K_n(x,y)^2 \dmu(y) \\
& \leq \frac{1}{2}\varepsilon^2 \int \dmu(x) \int K_n(x,y)^2 \dmu(y) \\
&\quad + 2\|f\|_\infty
\int \dmu(x) \int_{|x-y| \geq \delta} K_n(x,y)^2 \dmu(y).
\end{split}
\eeq

By the reproducing property, 
\beq \no
\frac{1}{2}\varepsilon^2 \int \dmu(x) \int K_n(x,y)^2 \dmu(y) = \frac{1}{2}\varepsilon^2 \int \dmu(x) K_n(x,x)
=\frac{n}{2}\varepsilon^2
\eeq
As for the second term, we use the Christoffel-Darboux formula \eqref{eq:CDForm} to write
\beq \no
\begin{split}
&\int \dmu(x) \int_{|x-y| \geq \delta} K_n(x,y)^2 \dmu(y) \\
&\quad \leq  
\int \dmu(x)\int \frac{(x-y)^2}{\delta^2} K_n(x,y)^2 \dmu(y) \\
& \quad \leq \frac{1}{\delta^2}\frac{\gamma_{n-1}^2}{\gamma_n^2}
\int \dmu(x) \int \left(p_n(x)p_{n-1}(y)-p_n(y)p_{n-1}(x) \right)^2 \dmu(y) \\
&\quad\leq \frac{1}{\delta^2}\frac{\gamma_{n-1}^2}{\gamma_n^2}
\int \dmu(x)\left( p_n(x)^2+p_{n-1}(x)^2\right) \\
&\quad=\frac{2}{\delta^2 }\frac{\gamma_{n-1}^2}{\gamma_n^2}.
\end{split}
\eeq 

Thus
\beq \no
\Var\left(  X_{f}^{(n)}\right) \leq \frac{n}{2}\varepsilon^2+\frac{4 \|f\|_\infty}{\delta^2 }\frac{\gamma_{n-1}^2}{\gamma_n^2}.
\eeq

Since $\mu$ has compact support, $\frac{1}{n}\left(\frac{\gamma_{n-1}}{\gamma_n}\right)^2 \rightarrow 0$. Thus, $\limsup_{n \rightarrow \infty}n^{-1} \Var\left( X_{f}^{(n)}\right) <\varepsilon^2$ for any $\varepsilon$, which means
$\Var\left( X_{f}^{(n)}\right)=o(n)$.

The proofs for $\mu$ with non-compact support and for a sequence of measures follow along the same lines.

\end{proof}

If $\mu$ has compact support and $f$ is uniformly continuous and bounded, this strategy can be stretched to give the same result for $X_{f,\alpha,x^*}$ for any $\alpha<1/2$. More generally:

\begin{proposition} \label{prop:LocalContin}
Assume $f$ is uniformly continuous and bounded, then for any $x^* \in \bbR$ and any $\alpha$ such that 
$\frac{1}{n^{1-2\alpha}}\left( \frac{ \gamma_{n-1}^{(n)}}{\gamma_n^{(n)}}\right)^2\rightarrow 0$ as $n \rightarrow \infty$, we have
\beq \label{eq:variancecontinuous1}
\Var X_{f,\alpha, x^*}^{(n)} = o(n), \quad \text{ as } n\to \infty.
\eeq
\end{proposition}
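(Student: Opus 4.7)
My plan is to mimic the proof of Proposition \ref{prop:contin} but applied to the scaled function $g_n(x):=f(n^\alpha(x-x^*))$, tracking how the scaling affects the splitting parameter and the off-diagonal estimate.

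Starting from the variance identity \eqref{eq:AlphaVariance},
\beq \no
\Var X_{f,\alpha,x^*}^{(n)}=\tfrac12 \iint \left(g_n(x)-g_n(y)\right)^2 K_n(x,y)^2 \dmu(x) \dmu(y),
\eeq
I would fix $\varepsilon>0$ and, using the uniform continuity of $f$, choose $\delta>0$ so that $|u-v|<\delta$ implies $|f(u)-f(v)|<\varepsilon$. The crucial observation is that $|g_n(x)-g_n(y)|<\varepsilon$ whenever $|x-y|<\delta n^{-\alpha}$, so I would split the region of integration into $|x-y|<\delta n^{-\alpha}$ and $|x-y|\geq \delta n^{-\alpha}$.

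On the near-diagonal piece, bounding $(g_n(x)-g_n(y))^2$ by $\varepsilon^2$ and using the reproducing property \eqref{eq:RepCD} together with $\int K_n(x,x)\dmu(x)=n$ yields a contribution of at most $\tfrac{n}{2}\varepsilon^2$, exactly as in Proposition \ref{prop:contin}. On the off-diagonal piece, bound $(g_n(x)-g_n(y))^2\leq 4\|f\|_\infty^2$ and use the inequality $1\leq (x-y)^2 n^{2\alpha}/\delta^2$ to reduce to
\beq \no
2\|f\|_\infty^2 \cdot \frac{n^{2\alpha}}{\delta^2} \iint (x-y)^2 K_n(x,y)^2 \dmu(x)\dmu(y).
\eeq
The Christoffel--Darboux formula \eqref{eq:CDForm} converts the last integral into
$\bigl(\gamma_{n-1}^{(n)}/\gamma_n^{(n)}\bigr)^2 \iint (p_n(x)p_{n-1}(y)-p_n(y)p_{n-1}(x))^2 \dmu(x)\dmu(y)$, which equals $2\bigl(\gamma_{n-1}^{(n)}/\gamma_n^{(n)}\bigr)^2$ by orthonormality. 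Combining,
\beq \no
\Var X_{f,\alpha,x^*}^{(n)}\leq \tfrac{n}{2}\varepsilon^2+\frac{4\|f\|_\infty^2 n^{2\alpha}}{\delta^2}\left(\frac{\gamma_{n-1}^{(n)}}{\gamma_n^{(n)}}\right)^2.
\eeq

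Dividing by $n$, the second term becomes $\tfrac{4\|f\|_\infty^2}{\delta^2}\cdot n^{-(1-2\alpha)}\bigl(\gamma_{n-1}^{(n)}/\gamma_n^{(n)}\bigr)^2$, which vanishes by the hypothesis, so $\limsup_{n\to\infty} n^{-1}\Var X_{f,\alpha,x^*}^{(n)}\leq \tfrac12\varepsilon^2$; letting $\varepsilon\downarrow 0$ gives the claim. There is no real obstacle here beyond bookkeeping---the only subtlety is that the $n^{2\alpha}$ generated by the smaller separation scale $\delta n^{-\alpha}$ is precisely absorbed by the hypothesis $n^{-(1-2\alpha)}(\gamma_{n-1}^{(n)}/\gamma_n^{(n)})^2\to 0$, which is what dictates the restriction to $\alpha<1/2$ in the compactly supported case where $\gamma_{n-1}/\gamma_n$ is bounded.
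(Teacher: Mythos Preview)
Your proof is correct and follows essentially the same approach as the paper's: split the double integral at the scale $\delta n^{-\alpha}$, control the near-diagonal piece by uniform continuity and the reproducing property, and control the far piece via the Christoffel--Darboux formula, arriving at the same bound $\tfrac{n}{2}\varepsilon^2+4\|f\|_\infty^2 n^{2\alpha}\delta^{-2}\bigl(\gamma_{n-1}^{(n)}/\gamma_n^{(n)}\bigr)^2$. The only cosmetic difference is your explicit introduction of $g_n$ and your (correct) use of $\|f\|_\infty^2$ where the paper has $\|f\|_\infty$, which appears to be a typo there.
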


\begin{proof}
As in the previous proof, fixing $\varepsilon>0$, we choose $\delta>0$ so that for any $x,y$ with $n^\alpha|x-y|<\delta$,
$\left| f\left(n^\alpha(x-x^*) \right)-f\left(n^\alpha(y-x^*) \right) \right| <\varepsilon$, and write

\beq \label{variance4}
\begin{split}
& \Var\left(  X_{f,\alpha,x^*}^{(n)}\right) \\ 
&=\frac{1}{2}\int \int \left(f\left(n^\alpha(x-x^*\right)-f\left(n^\alpha(y-x^*)\right) \right)^2 K_n(x,y)^2 \dmu(y)\dmu(x) \\
&=\frac{1}{2}\int \int_{|x-y|<\frac{\delta}{n^\alpha}}+\frac{1}{2}\int \int_{|x-y| \geq \frac{\delta}{n^\alpha}}  \\
&\leq  \frac{1}{2}\varepsilon^2 \int \dmu(x) K_n(x,x)+ 2\|f\|_\infty
\int \dmu(x)\int \frac{n^{2\alpha}(x-y)^2}{\delta^2} K_n(x,y)^2 \dmu(y) \\
&\leq \frac{n}{2}\varepsilon^2+2\frac{ n^{2\alpha}\|f\|_\infty}{\delta^2}\frac{\gamma_{n-1}^2}{\gamma_n^2}
\int \dmu(x)\left( p_n(x)^2+p_{n-1}(x)^2\right) \\
&=\frac{n}{2}\varepsilon^2+\frac{4n^{2\alpha} \|f\|_\infty}{\delta^2 }\frac{\gamma_{n-1}^2}{\gamma_n^2}.
\end{split}
\eeq
This implies that $\limsup_{n \rightarrow \infty}n^{-1} \Var\left( X_{f,\alpha,x^*}^{(n)}\right) <\varepsilon^2$ for any $\varepsilon$, which means $\Var\left( X_{f_\alpha,x^*}^{(n)}\right)=o(n)$.
\end{proof}

\begin{remark}
The proofs above can be modified to allow a finite number of points of discontinuity of $f$ (with $f$ remaining uniformly continuous away from these points), if we add the assumption that $\mu$ is continuous and that $K_n(x,x) \leq C n$ on neighborhoods of these points (in the macroscopic case) or $K_n(x,x) \leq C n$ on a neighborhood of $x^*$ (in the mesoscopic case). We omit this proof since we shall prove a similar result (albeit under more restrictive conditions) for all $0<\alpha<1$ later.
\end{remark}

Note that due to the Vandermonde determinant in the definition \eqref{eq:defOPE}, the random configuration for an OPE can be thought of as a Coulomb gas.  Hence we expect to see some repulsion between the points. When compared with the situation in which we take the points $\lambda_j$ to be i.i.d.\ random variable (instead of the OPE) it is reasonable to expect that this repulsion reflects itself in a smaller variance.  This partly explains why we are able to improve \eqref{eq:stupidboundvariance} and \eqref{eq:stupidboundvariance1} to \eqref{eq:variancecontinuous} and \eqref{eq:variancecontinuous1} for continuous functions. The repulsion becomes even more apparent when we restrict attention to Lipschitz functions.

\begin{proposition} \label{prop:lipschitz} 
Let $\mu$ be any measure, $x^*\in \bbR$, $\alpha\geq 0$ and assume that $f$ satisfies
\beq \no
|f(x)-f(y)|\leq |f|_{\mathcal L} |x-y|
\eeq for some $|f|_{\mathcal L} \geq 0$. Then
\beq \label{eq:variancelip}
\Var X_{f,\alpha,x^*} ^{(n)}\leq|f|_{\mathcal L}^2 \left(\frac{\gamma_{n-1}}{\gamma_n} \right)^2 n^{2\alpha}.
\eeq
In particular, if $\mu$ has compact support, then $\Var X_f^{(n)}$ is bounded.
\end{proposition}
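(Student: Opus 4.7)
The plan is to work directly from the symmetric expression for the variance in \eqref{eq:AlphaVariance}, namely
\[
\Var X_{f,\alpha,x^*}^{(n)} = \frac12 \iint \bigl(f(n^\alpha(x-x^*)) - f(n^\alpha(y-x^*))\bigr)^2 K_n(x,y)^2\,{\rm d}\mu(x)\,{\rm d}\mu(y),
\]
and immediately use the Lipschitz hypothesis to pull out a factor of $|f|_{\mathcal L}^2 n^{2\alpha}$ and reduce the integrand to $(x-y)^2 K_n(x,y)^2$. The crucial point is that this combination is tailor-made for the Christoffel--Darboux formula \eqref{eq:CDForm}, which converts the factor $(x-y)K_n(x,y)$ into a simple polynomial expression without denominators.

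Concretely, applying \eqref{eq:CDForm} gives
\[
(x-y)^2 K_n(x,y)^2 = \left(\frac{\gamma_{n-1}}{\gamma_n}\right)^2 \bigl(p_n(x)p_{n-1}(y) - p_n(y)p_{n-1}(x)\bigr)^2.
\]
Expanding the square produces three terms. For the two diagonal terms, orthonormality of $\{p_j\}$ yields
\[
\iint p_n(x)^2 p_{n-1}(y)^2\,{\rm d}\mu(x)\,{\rm d}\mu(y) = \iint p_n(y)^2 p_{n-1}(x)^2\,{\rm d}\mu(x)\,{\rm d}\mu(y) = 1,
\]
while the cross term vanishes because it factors as $\bigl(\int p_n p_{n-1}\,{\rm d}\mu\bigr)^2 = 0$. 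Combining gives $\iint (x-y)^2 K_n(x,y)^2\,{\rm d}\mu(x)\,{\rm d}\mu(y) = 2(\gamma_{n-1}/\gamma_n)^2$, and substituting back into the variance bound cancels the factor of $1/2$ and yields exactly \eqref{eq:variancelip}.

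The ``in particular'' statement then follows by setting $\alpha=0$ (so that $X_{f,0,x^*}^{(n)} = X_f^{(n)}$ up to a constant shift in argument) and recalling, as noted in Section \ref{DetPP}, that $\gamma_{n-1}/\gamma_n$ is bounded in $n$ whenever $\mu$ has compact support. There is no real obstacle here; everything reduces to the clean algebraic identity provided by Christoffel--Darboux combined with orthonormality. The only mildly delicate aspect is checking that the manipulations are legitimate without further moment assumptions, but since $f$ is Lipschitz the integrand is pointwise bounded by $|f|_{\mathcal L}^2 n^{2\alpha}(x-y)^2 K_n(x,y)^2$, whose integrability is exactly what the Christoffel--Darboux computation establishes.
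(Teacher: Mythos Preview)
Your proof is correct and follows the same route as the paper: apply the Lipschitz bound inside the symmetric variance formula \eqref{eq:AlphaVariance}, then use the Christoffel--Darboux formula \eqref{eq:CDForm} and orthonormality to evaluate $\iint (x-y)^2 K_n(x,y)^2\,{\rm d}\mu(x)\,{\rm d}\mu(y) = 2(\gamma_{n-1}/\gamma_n)^2$. The paper simply states this identity without expanding the square; your observation that the cross term vanishes exactly (so one gets equality rather than the inequality used in the proof of Proposition~\ref{prop:contin}) is correct and makes explicit what the paper leaves implicit.
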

\begin{proof}
Inserting the Lipschitz condition into \eqref{eq:AlphaVariance} we obtain 
\beq
\begin{split}
\Var X_{f,\alpha,x^*}^{(n)} & \leq \frac{|f|_{\mathcal L}^2 n^{2\alpha}}{2} \iint (x-y)^2 K_n(x,y)^2 {\rm d}\mu(x) {\rm d}\mu(y) \\ 
&=|f|_{\mathcal L}^2 \left(\frac{\gamma_{n-1}}{\gamma_n} \right)^2 n^{2\alpha}.
\end{split}
\eeq
In the case that $\mu$ has compact support, $\frac{\gamma_{n-1}}{\gamma_n}$ is uniformly bounded in $n$. This proves the statement.
\end{proof}
\begin{remark}
We find it remarkable that the variance for Lipschitz linear statistics is bounded  in $n$ for any $\mu$ with compact support. No assumptions on local continuity or global regularity of $\mu$ are needed.
\end{remark}
\begin{remark}
We have stated \eqref{eq:variancelip} in the form that we have in order to show its applicability to general measures (or sequences of measures). For $\alpha=0$ it shows that one can bound the variance of Lipschitz functions by $\left(\frac{\gamma_{n-1}}{\gamma_n} \right)^2$. 
Note that for $0\leq \alpha < 1/3$,  the simple estimate \eqref{eq:variancelip} is better than the (not so simple) esimate \eqref{varianceConclusion} below, which assumes much more about the measure (but less about $f$). 
\end{remark}

Combining these results with Theorem \ref{th:generalexponentialbounds} we get the following, (in which {\bf I} is Theorem \ref{thm:LLN}).

\begin{theorem} \label{thm:generalmacroLLN} \  \\ 
{\bf I.}  There exists a universal constant, $A>0$, such that for any measure with finite moments, $\mu$, any bounded function, $f$, any $\varepsilon>0$, and any $n\in \bbN$, 
\beq \label{eq:expboundmacroAA}
P\left( \left|\frac{1}{n} X_f^{(n)}-\frac{1}{n}\EE X_f^{(n)} \right|\geq \eps \right)\leq 2 \exp \left(- n \eps \min\left( \frac{ \eps}{8A \|f\|_\infty^2}, \frac{1}{6 \|f \|_\infty} \right) \right).
\eeq
In particular, if $\mu_n$ is a sequence of such measures for which also $\frac{K_n(x,x)}{n}{\rm d} \mu_n(x)$ has a weak limit, $\nu$, and $f$ is bounded and continuous, then
 \beq \label{eq:LLNAA}
\lim_{n\to \infty} \frac{1}{n} X_f^{(n)}= \int f(x) \ {\rm d} \nu(x),
\eeq 
almost surely. \\
\noindent
{\bf II.} 
Assume that either $\mu$ has compact support and $f$ is continuous; or that the sequence $\{\mu_n\}_{n=1}^\infty$ satisfies $\frac{1}{n}\left(\frac{\gamma_{n-1}^{(n)}}{\gamma_n^{(n)}}\right)^2 \rightarrow 0$ as $n \rightarrow \infty$, and $f$ is uniformly continuous and bounded. Then for any $\varepsilon>0$
\beq \label{eq:expboundmacrocontAA}
P\left( \left|\frac{1}{n} X_f^{(n)}-\frac{1}{n}\EE X_f^{(n)} \right|\geq \eps \right)\leq 2 \exp \left(- \frac{n\eps }{6 \|f \|_\infty}  \right)
\eeq
for  $n \in \bbN$ sufficiently large.

\noindent
{\bf III.} Suppose that $\mu$ has compact support or $\mu=\mu_n$ with $\left(\frac{\gamma_{n-1}^{(n)}}{\gamma_n^{(n)}}\right)^2=\mathcal O (1)$ as $n \rightarrow \infty$.  There there exists a constant $A$ such  that for any Lipschitz function, $f$ with Lipschitz constant $|f|_{\mathcal L}$, and any $\eps>0$
\beq \label{eq:expboundmacrocont}
P\left( \left| X_f^{(n)}- \EE X_f^{(n)} \right|\geq \eps \right)\leq 2 \exp  \left(-  \eps \min\left( \frac{ \eps}{ 4A |f|_{\mathcal L}^2}, \frac{1}{6 \|f \|_\infty} \right) \right)
\eeq
for  $n \in \bbN$ sufficiently large.
\end{theorem}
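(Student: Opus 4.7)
The plan is to combine the general concentration inequality Theorem~\ref{th:generalexponentialbounds} with the three variance bounds established earlier in this section: Proposition~\ref{prop:bounded} for Part~\textbf{I}, Proposition~\ref{prop:contin} for Part~\textbf{II}, and Proposition~\ref{prop:lipschitz} for Part~\textbf{III}. For Part~\textbf{I}, the Christoffel--Darboux kernel is a self-adjoint projection of rank $n$, so Theorem~\ref{th:generalexponentialboundswithvariance} (which already packages Proposition~\ref{prop:bounded} into the general concentration inequality) applies directly. I would apply it to $X_f^{(n)}$ and rescale $\varepsilon\mapsto n\varepsilon$ to pass from $X_f^{(n)}$ to its normalization by $n$; the two regimes of \eqref{eq:generalexponentialboundswithvariance} then collapse into the min inside the exponent of \eqref{eq:expboundmacroAA}. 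For the almost sure convergence \eqref{eq:LLNAA}, weak convergence of $n^{-1}K_n(x,x)\,\mathrm d\mu_n(x)$ to $\nu$ forces $n^{-1}\EE X_f^{(n)}\to\int f\,\mathrm d\nu$ for bounded continuous $f$; and for each fixed $\varepsilon>0$ the right-hand side of \eqref{eq:expboundmacroAA} is summable in $n$, so Borel--Cantelli yields almost sure convergence of $n^{-1}X_f^{(n)}-n^{-1}\EE X_f^{(n)}\to 0$.

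For Part~\textbf{II}, I would feed the sharper bound $\Var X_f^{(n)}=o(n)$ of Proposition~\ref{prop:contin} into Theorem~\ref{th:generalexponentialbounds} applied to $n^{-1}X_f^{(n)}$ (i.e.\ with $\varepsilon$ replaced by $n\varepsilon$). For each fixed $\varepsilon>0$ the threshold condition $n\varepsilon\geq 2A\Var X_f^{(n)}/(3\|f\|_\infty)$ holds for all sufficiently large $n$ precisely because $\Var X_f^{(n)}=o(n)$, placing us in the second (purely exponential) regime and producing \eqref{eq:expboundmacrocontAA}. For Part~\textbf{III}, Proposition~\ref{prop:lipschitz} at $\alpha=0$ gives $\Var X_f^{(n)}\leq |f|_{\mathcal L}^2(\gamma_{n-1}/\gamma_n)^2$, and the standing hypothesis ensures this is $O(|f|_{\mathcal L}^2)$ uniformly in $n$. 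I would plug this directly into Theorem~\ref{th:generalexponentialbounds} \emph{without} the $n$-rescaling and read off \eqref{eq:expboundmacrocont} from the min of the two regime bounds, with the constant $A$ in the conclusion absorbing both the constant from Theorem~\ref{th:generalexponentialbounds} and the uniform bound on the leading-coefficient ratio.

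There is no real obstacle here---the substance lives in Theorem~\ref{th:generalexponentialbounds} and the variance propositions, both already in hand. The only bookkeeping requiring care is matching the two thresholds of \eqref{eq:generalexponentialbounds} across the three rescalings in Parts~\textbf{I}, \textbf{II} and \textbf{III}, and checking in Part~\textbf{I} that the tail sequence $\exp(-cn\min(\varepsilon,\varepsilon^2))$ is summable in $n$ for every fixed $\varepsilon>0$ so that Borel--Cantelli applies.
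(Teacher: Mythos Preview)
Your proposal is correct and follows essentially the same route as the paper: in each part you feed the appropriate variance bound (Propositions~\ref{prop:bounded}, \ref{prop:contin}, \ref{prop:lipschitz}) into the general concentration inequality (Theorem~\ref{th:generalexponentialbounds} or its packaged form Theorem~\ref{th:generalexponentialboundswithvariance}), rescale $\varepsilon\mapsto n\varepsilon$ in Parts~\textbf{I} and~\textbf{II}, and in Part~\textbf{I} conclude almost sure convergence via Borel--Cantelli after identifying the limit of $n^{-1}\EE X_f^{(n)}$ from weak convergence. The paper's argument is the same, with only cosmetic differences in how the two regimes of \eqref{eq:generalexponentialbounds} are merged into a single $\min$.
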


\begin{proof} \ \\
{\bf I.}
Note that by replacing $\eps=\eps n$ in \eqref{eq:generalexponentialbounds} and taking the maximum of the two cases we have
\beq \label{eq:bigtheorema}
\begin{split}
&P\left(\frac{1}{n}|X_f^{(n)}-\EE \frac{1}{n} X_f^{(n)}|\geq \eps\right) \\
&\quad\leq 2 \max\left(\exp\left(-\frac{n^2\eps^2}{4 A \Var X_f}\right),\exp\left(- \frac{n \eps}{6\|f\|_\infty}\right)\right)
\\
&=2\exp\left(-\min  \left( \frac{n^2\eps^2}{4 A \Var X_f},\frac{n\eps}{6\|f\|_\infty}\right)\right).
\end{split}
\eeq
By \eqref{eq:stupidboundvariance} we can further estimate the first term in the minimum and obtain \eqref{eq:expboundmacroAA}.

Now let us also assume $\frac{1}{n} K_n(x,x) {\rm d} \mu(x)$ has a weak limit denoted by $\nu$. This means that for any continuous function $f$ and $\eps>0$ 
$$\left|\frac{1}{n} \EE X_f^{(n)}- \int f(x) \ {\rm d} \nu(x)\right|\leq \eps/2$$
for $n$ sufficiently large. Hence by the triangle inequality we have that 
\beq \no
P\left(\left| \frac{1}{n} X_f^{(n)}- \int f(x) \ {\rm d} \nu(x)\right|\geq \eps \right)\leq P\left(\frac{1}{n} \left|X_f^{(n)}- \EE X_f^{(n)}\right|\geq \eps/2 \right),
\eeq
for $n$ sufficiently large. By  \eqref{eq:expboundmacroAA} the latter is  exponentially small as $n\to \infty$. Therefore, by applying the Borel-Cantelli lemma we obtain the almost sure convergence. 

\noindent
{\bf II.}  Under the stated conditions, Proposition \ref{prop:contin} implies that 
$$\min  \left( \frac{n^2\eps^2}{4 A \Var X_f},\frac{n\eps}{6\|f\|_\infty}\right)=\frac{n\eps}{6\|f\|_\infty},$$
for $n$ sufficiently large. By inserting the latter into \eqref{eq:bigtheorema} we obtain \eqref{eq:expboundmacrocontAA}. 

\noindent
{\bf III. } This follows directly from \eqref{eq:generalexponentialbounds} and \eqref{eq:variancelip}. 
 \end{proof}

Note that there is no normalization in  \eqref{eq:expboundmacrocont}. This is due to the strong repulsion between the points.

Theorem \ref{thm:generalmacroLLN} is on the global scale. As noted in the Introduction, wherever the bound depends only on $\|f \|_\infty$, there is an immediate corollary for the mesoscopic scale. Nevertheless, for good measure, we state the meaningful result for $x^*$ in the bulk in the following theorem, which also contains the mesoscopic bound for Lipschitz $f$. 

\begin{theorem}\label{thm:generalsemiLLN1} \ \\
{\bf I.} There exists a universal constant, $A>0$, such that for any measure with finite moments, $\mu$, any $x^*\in \bbR$, any $0<\alpha<1$,  any bounded function, $f$,  any $\varepsilon>0$, and any $n\in \bbN$,
\beq \label{eq:expboundmeso}
P\left(\frac{1}{n^{1-\alpha}} \left| X_{f,\alpha,x^*}^{(n)}-\EE X_{f,\alpha,x^*}^{(n)} \right|\geq \eps \right)\leq 2 \exp \left(- \varepsilon \min \left( \frac{ \eps n^{1-2\alpha}}{ 8A \|f\|_\infty^2 }, \frac{n^{1-\alpha}}{6\|f\|_\infty} \right) \right)
\eeq

\noindent
{\bf II.} Suppose that $\mu$ has compact support or $\mu=\mu_n$ with $\left(\frac{\gamma_{n-1}^{(n)}}{\gamma_n^{(n)}}\right)^2=\mathcal O (1)$ as $n \rightarrow \infty$.  Then  for any  any $x^*\in \bbR$, any $0<\alpha$,  any Lipschitz function, $f$, with Lipschitz constant $|f|_{\mathcal L}$ and any $\eps>0$ 
\beq \label{eq:expboundmacrocontalpha}
P\left( \frac{1}{n^{\alpha}}\left| X_{f,x^*,\alpha}^{(n)}- \EE X_{f,x^*,\alpha}^{(n)} \right|\geq \eps \right)\leq 2 \exp  \left(-\eps \min\left( \frac{ \eps}{ 2A |f|_{\mathcal L}^2}, \frac{n^\alpha}{6 \|f \|_\infty} \right) \right).
\eeq
In particular, for sufficiently large $n$ $($depending on $\varepsilon)$,
\beq \no
P\left( \frac{1}{n^{\alpha}}\left| X_{f,x^*,\alpha}^{(n)}- \EE X_{f,x^*,\alpha}^{(n)} \right|\geq \eps \right)\leq 2 \exp  \left(- \frac{ \eps^2}{ 2A |f|_{\mathcal L}^2} \right).
\eeq
\end{theorem}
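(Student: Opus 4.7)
The plan is to treat the two parts separately since both are essentially corollaries of earlier results and require only a rescaling/substitution.

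For Part I, the bound is an immediate specialization of Theorem \ref{thm:semiLLLN}. Recall that Theorem \ref{thm:semiLLLN} gives
\beq \no
P\left(\frac{1}{N} \left| X_{f,\alpha,x^*}^{(n)}-\EE X_{f,\alpha,x^*}^{(n)} \right|\geq \eps \right)\leq 2 \exp \left(- \varepsilon \min \left( \frac{ \eps N^2}{ 8A n \|f\|_\infty^2 }, \frac{N}{6\|f\|_\infty} \right) \right)
\eeq
for any $N\in\mathbb{R}$. Plugging in $N=n^{1-\alpha}$ gives $N^2/n=n^{1-2\alpha}$ and $N=n^{1-\alpha}$, which is exactly \eqref{eq:expboundmeso}. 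Note we do not need $0<\alpha<1$ for the bound itself to hold, but this range is where the bound becomes meaningful.

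For Part II, the idea is to combine the Lipschitz variance bound from Proposition \ref{prop:lipschitz} with the general concentration inequality from Theorem \ref{th:generalexponentialbounds}. Applying Theorem \ref{th:generalexponentialbounds} to the linear statistic $X_{f,\alpha,x^*}^{(n)}$ with the threshold $\eps n^\alpha$ in place of $\eps$, we obtain the unified bound
\beq \no
P\!\left(\left|X_{f,\alpha,x^*}^{(n)}-\EE X_{f,\alpha,x^*}^{(n)}\right|\geq \eps n^\alpha\right)\leq 2\exp\!\left(-\min\!\left(\frac{\eps^2 n^{2\alpha}}{4A\,\Var X_{f,\alpha,x^*}^{(n)}},\frac{\eps n^\alpha}{6\|f\|_\infty}\right)\right).
\eeq
Proposition \ref{prop:lipschitz} gives $\Var X_{f,\alpha,x^*}^{(n)}\leq |f|_{\mathcal L}^2 (\gamma_{n-1}/\gamma_n)^2 n^{2\alpha}$. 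Under the standing hypothesis that $(\gamma_{n-1}^{(n)}/\gamma_n^{(n)})^2 = \mathcal{O}(1)$ (which holds for $\mu$ compactly supported, and holds by assumption for the varying case), the $n^{2\alpha}$ factors cancel and the first argument of the minimum becomes a constant multiple of $\eps^2/|f|_{\mathcal L}^2$; adjusting $A$ to absorb $\sup_n(\gamma_{n-1}^{(n)}/\gamma_n^{(n)})^2$ and a factor of $2$ we obtain $\eps^2/(2A|f|_{\mathcal L}^2)$. Factoring out an $\eps$ from each term inside the minimum and dividing both sides by $n^\alpha$ yields \eqref{eq:expboundmacrocontalpha}. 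The ``in particular'' statement follows because, for fixed $\eps>0$, the second term $n^\alpha/(6\|f\|_\infty)$ dominates the first $\eps/(2A|f|_{\mathcal L}^2)$ once $n$ is large enough, so the minimum equals its first argument.

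There is no real obstacle here — both parts are essentially bookkeeping. The only minor subtlety is keeping track of where the hypothesis $(\gamma_{n-1}/\gamma_n)^2=\mathcal{O}(1)$ is needed in Part II (it is only used to control the variance factor, not in the application of Theorem \ref{th:generalexponentialbounds}), and the convention of absorbing the (finite) $\sup$ of this ratio into the universal constant $A$.
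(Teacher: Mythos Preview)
Your proof is correct and matches the paper's approach essentially verbatim: Part I is obtained by specializing the general mesoscopic bound with $N=n^{1-\alpha}$ (equivalently, substituting the crude variance bound \eqref{eq:stupidboundvariance1} into \eqref{eq:generalexponentialbounds} with $\eps\mapsto \eps n^{1-\alpha}$), and Part II by plugging the Lipschitz variance bound \eqref{eq:variancelip} into \eqref{eq:generalexponentialbounds} with $\eps\mapsto \eps n^{\alpha}$ and absorbing $\sup_n(\gamma_{n-1}/\gamma_n)^2$ into the constant.
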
 

\begin{remark}
The inequality \eqref{eq:expboundmeso}, which is the local analog of \eqref{eq:expboundmacroAA}, holds for all $0<\alpha$ but is meaningful only for $\alpha<1/2$.  The analog of \eqref{eq:expboundmacrocontAA} (where $f$ is assumed to be continuous) has no additional information. The inequality \eqref{eq:expboundmacrocontalpha} is the local analog of \eqref{eq:expboundmacrocont}.
\end{remark}

\begin{proof}
The exponential bound \eqref{eq:expboundmeso} follows from substituting \eqref{eq:stupidboundvariance1} into \eqref{eq:generalexponentialbounds} with $\eps$ replaced by $\eps n^{1-\alpha}$.  
The bound in \eqref{eq:expboundmacrocontalpha} follows by replacing $\eps$ by $n^{\alpha}\eps$ in \eqref{eq:generalexponentialbounds} and using \eqref{eq:variancelip}. 
\end{proof}

\begin{remark} \label{rem:convergence}
Assume that $\mu_n$ are all absolutely continuous on a neighborhood, $I$, of $x^*$, with weight $w_n$ there. Then if $f$ has compact support
\beq \no
\begin{split}
\frac{1}{n^{1-\alpha}}\EE X_{f,\alpha,x^*}^{(n)} & =\frac{1}{n^{1-\alpha}}\int f\left(n^\alpha \left(x-x^* \right) \right)K_n(x,x) \dmu_n(x)\\
&=\frac{1}{n^{1-\alpha}}\int_I f\left(n^\alpha \left(x-x^* \right) \right)K_n(x,x) w(x) {\rm d}x\\
&=\int f(s) \frac{K_n\left(x^*+\frac{s}{n^\alpha},x^*+\frac{s}{n^\alpha} \right)}{n}w \left(x^*+\frac{s}{n^\alpha} \right) {\rm d}s
\end{split}
\eeq
for sufficiently large $n$. If $\frac{K_n(x,x)}{n}w_n(x)$ has a uniform limit, $\rho(x)$ on $I$, and $\rho$ is continuous at $x^*$, we get from this that
\beq \no
\lim_{n \rightarrow \infty}\frac{1}{n^{1-\alpha}}\EE X_{f,\alpha,x^*}^{(n)}=\rho(x^*) \int f(s) {\rm d}s.
\eeq
Thus, in this case, the bound \eqref{eq:expboundmeso} implies a local law of large numbers for compactly supported $f$ and $\alpha<1/2$. Propositions \ref{prop:totik} and \ref{prop:totik1} describe conditions under which the above holds. We show below, however, that essentially under these conditions we can in fact get a local law of large numbers for $\alpha<1$. 

It is worth noting that if $f$ has support in an interval $I$ with $\mu_n$ satisfying the above conditions on $I$, then \eqref{eq:expboundmacroAA} implies that a law of large numbers is satisfied for $f$ bounded and not necessarily continuous.  
\end{remark}

\section{The $\alpha$-Nevai condition and bounds on the variance in all mesoscopic scales} \label{alphanevai}

We recall that a measure, $\mu$, is said to satisfy the Nevai condition at $x$ if for any continuous, compactly supported function, $f$,
\beq \label{eq:nevaicondition}
\int( f(y)-f(x)) \frac{K_n(x,y)^2}{K_n(x,x)} {\rm d} \mu(y)\to 0,
\eeq
as $n\to \infty$.  In other words, if the Nevai condition holds at $x$ then $\frac{K_n(x,y)^2}{K_n(x,x)}\dmu(y)$ converges to a $\delta$-measure at $x$. The connection between the Nevai condition and the variance of $X_f$ becomes apparent when one writes the variance as
\beq \no
\begin{split}
\frac{\Var X_f^{(n)}}{n}&=\int \left(\int (f(y)-f(x)) \frac{K_n(x,y)^2}{K_n(x,x)} {\rm d} \mu(y) \right)   f(x)  \frac{K_n(x,x)}{n}{\rm d}\mu(x).
\end{split}
\eeq
Thus, we see that the convergence to zero in \eqref{eq:variancecontinuous} may be thought of as an averaged Nevai condition. On the other hand, if the Nevai condition is known to hold uniformly and $\frac{K_n(x,x)}{n}$ is bounded on the support of $f$, then we get convergence to zero. For the macroscopic scale, this is of course useless, since we already know \eqref{eq:variancecontinuous}. However, we will use precisely this intuition to obtain the local law of large numbers.

As noted in the Introduction, the Nevai condition has been introduced by Nevai in \cite{nevai} in order to study ratio asymptotics for Christoffel functions, and has been extensively studied, both for its own intrinsic interest and because of its connection to problems in approximation theory and spectral theory \cite{BLS, CMN, dellavecchia, LO, lubinskyNevai, LN, Nevai2, NTZ, Szwarc, Szwarc2, Zhang}. Conjecture 1.4 of \cite{BLS} states that for any compactly supported $\mu$, the Nevai condition holds for $\mu$-a.e.\ $x$. As an offshoot of our analysis, we shall show that universality at $x^*$, together with absolute continuity of $\mu$ there, implies the Nevai condition holds at $x^*$. This implies, in particular, that the condition holds for a.e.\ point w.r.t.\ the absolutely continuous part of spectral measures of ergodic Jacobi matrices. 

In order to control the variance of $X_{f,\alpha,x^*}^{(n)}$ we need a scaled version of the Nevai condition. It will also be useful in our discussion to let $\mu$ depend on $n$. 

\begin{definition} \label{def:alpha-nevai}
Let $\alpha>0$. We say that a sequence of measures $\{\mu_n\}_{n=1}^\infty$, satisfies the $\alpha$-Nevai condition at $x^*$ if for any $f$, continuous with compact support,
\beq \label{eq:uniform-alpha-nevai}
\lim_{n \rightarrow \infty} \int \left(f(s)-f\left(n^\alpha \left( y-x^* \right) \right) \right)\frac{K_n\left(x^*+ \frac{s}{n^\alpha},y \right)^2}{K_n\left(x^*+ \frac{s}{n^\alpha},x^*+\frac{s}{n^\alpha} \right)} \dmu_n(y) =0
\eeq
uniformly for $s$ in compact sets of $\bbR$, where $K_n(x,y) = K_n(x,y;\mu_n)$.
\end{definition}

\begin{remark}
If $\mu_n \equiv \mu$ is constant, we say that $\mu$ satisfies the $\alpha$-Nevai condition at $x^*$.
\end{remark}

Note that for $s=0$, $\alpha=0$, the $\alpha$-Nevai condition is just the Nevai condition (replace $f$ by $f(\cdot+x^*)$). Thus, an $\alpha$-Nevai condition implies convergence of $\frac{K_n\left(x^*,y \right)^2}{K_n(x^*,x^*)} d\mu_n(y)$ to a delta measure at $x^*$ at a fast rate.
\begin{proposition} \label{prop:alphaNevai1}
If $\{\mu_n\}$ satisfies an $\alpha$-Nevai condition at $x^*$ for some $\alpha>0$, then $\mu_n$ satisfies the Nevai condition at $x^*$.
\end{proposition}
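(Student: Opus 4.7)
The plan is to apply the $\alpha$-Nevai condition only at $s=0$ and with a single well-chosen test function, in order to deduce that the probability measures
\[
{\rm d}\nu_n(y) := \frac{K_n(x^*,y)^2}{K_n(x^*,x^*)}\,{\rm d}\mu_n(y)
\]
(which are indeed probability measures by the reproducing property \eqref{eq:RepCD}) concentrate at $x^*$. Once that is established, the ordinary Nevai condition falls out by a standard continuity-plus-tightness argument, since $\nu_n \to \delta_{x^*}$ weakly is nothing other than \eqref{eq:nevaicondition} (suitably phrased for a sequence of measures).

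First, I would pick a continuous, compactly supported function $\varphi \colon \bbR \to [0,1]$ with $\varphi(0)=1$ and $\supp \varphi \subseteq [-1,1]$, and apply \eqref{eq:uniform-alpha-nevai} at $s=0$ with $f=\varphi$. Since $\varphi(0)=1$, this yields
\[
\int \bigl(1 - \varphi(n^\alpha(y-x^*))\bigr)\,{\rm d}\nu_n(y) \longrightarrow 0.
\]
The integrand is nonnegative and equals $1$ whenever $|y-x^*| > n^{-\alpha}$, so $\nu_n(\{|y-x^*| > n^{-\alpha}\}) \to 0$. Because $\alpha>0$, for every fixed $\delta>0$ we eventually have $n^{-\alpha} < \delta$, and hence $\nu_n(\{|y-x^*| \geq \delta\}) \to 0$ as well. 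This is exactly the tightness statement that $\nu_n \to \delta_{x^*}$.

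Second, given any continuous, compactly supported $g$ and any $\varepsilon>0$, I would use continuity of $g$ at $x^*$ to choose $\delta>0$ with $|g(y)-g(x^*)|<\varepsilon$ whenever $|y-x^*|<\delta$, and split the Nevai integral as
\[
\left| \int (g(y)-g(x^*))\,{\rm d}\nu_n(y) \right|
\leq \varepsilon\,\nu_n(\{|y-x^*|<\delta\}) + 2\|g\|_\infty \,\nu_n(\{|y-x^*|\geq \delta\}).
\]
By the first step the second term is $o(1)$, while the first is at most $\varepsilon$. Since $\varepsilon$ was arbitrary, this gives \eqref{eq:nevaicondition} at $x^*$ for the sequence $\{\mu_n\}$.

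I do not expect any real obstacle: the $\alpha$-Nevai condition is strictly stronger than the Nevai condition in content (rate, uniformity in $s$), and the only ingredient we actually need from it is the $s=0$ concentration at a shrinking scale. The reproducing property giving $\int {\rm d}\nu_n = 1$ is what makes the splitting argument clean.
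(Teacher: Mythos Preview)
Your proof is correct and follows essentially the same route as the paper: apply the $\alpha$-Nevai condition at $s=0$ with a bump function equal to $1$ at the origin to conclude that the probability measures $\nu_n$ concentrate at $x^*$ on scale $n^{-\alpha}$, and then use that $n^{-\alpha}\to 0$ to deduce the ordinary Nevai condition. The only cosmetic difference is that the paper chooses a $\delta$-dependent bump and shows $\nu_n\bigl((x^*-\delta/n^\alpha,x^*+\delta/n^\alpha)\bigr)\to 1$ directly, whereas you fix a single bump supported in $[-1,1]$ and then argue $n^{-\alpha}<\delta$ eventually; these are equivalent, and your explicit $\varepsilon$--$\delta$ splitting in the second step makes transparent what the paper leaves as ``it is enough to show.''
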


\begin{proof}
Note that it is enough to show that 
\beq \label{eq:concmass}
\lim_{n \rightarrow \infty} \int_{y \in \left(x^*-\delta, x^*+\delta\right)}\frac{K_n \left(x^*,y \right)^2}{K_n \left(x^*,x^* \right)}\dmu_n(y) =1
\eeq 
for any $\delta>0$.

Fix $\delta>0$. By choosing $f$ so that $f(0)=1$, $0\leq f(t)<1$ for any $t \neq 0$, and $\supp(f) \subseteq (-\delta,\delta)$, it follows from \eqref{eq:uniform-alpha-nevai} with $s=0$ that
\beq \no
\lim_{n \rightarrow \infty} \int f\left(n^\alpha\left(y-x^* \right) \right)\frac{K_n \left(x^*,y \right)^2}{K_n \left(x^*,x^* \right)}\dmu_n(y) =1,
\eeq 
\beq \no
 \int_{y \notin \left(x^*-\delta/n^\alpha, x^*+\delta/n^\alpha\right)} f\left(n^\alpha\left(y-x^* \right) \right)\frac{K_n \left(x^*,y \right)^2}{K_n \left(x^*,x^* \right)}\dmu_n(y)=0,
\eeq
and
\beq \no
\int_{y \in \left(x^*-\delta/n^\alpha, x^*+\delta/n^\alpha \right)} f\left(n^\alpha\left(y-x^* \right) \right)\frac{K_n \left(x^*,y \right)^2}{K_n \left(x^*,x^* \right)}\dmu_n(y) \leq 1.
\eeq
This implies that 
\beq \no
\lim_{n \rightarrow \infty} \int_{y \in \left(x^*-\delta/n^\alpha, x^*+\delta/n^\alpha\right)}\frac{K_n \left(x^*,y \right)^2}{K_n \left(x^*,x^* \right)}\dmu_n(y) =1,
\eeq 
and in particular \eqref{eq:concmass} holds. We are done.
\end{proof}

For a measure $\dmu(x)=w(x){\rm d}x+\dmu_{{\rm sing}}(x)$ and $K_n(x,y)$ the corresponding Christoffel-Darboux kernel, recall that
\beq \no
\wti{K}_n(x,y)=w(x)^{1/2} w(y)^{1/2}K_n(x,y).
\eeq
In case $\mu_n$ depends on $n$, we let 
\beq \no
\wti{K}_n(x,y)=w_n(x)^{1/2} w_n(y)^{1/2}K_n(x,y)
\eeq
with the understanding that $K_n$ is the $n$'th Christoffel-Darboux kernel for $\mu_n$.
The reason that the $\alpha$-Nevai condition is useful, is the following

\begin{theorem} \label{VarianceDecay}
Let $0<\alpha<1$. Let $\{\mu_n\}_{n=1}^\infty$ be a sequence of probability measures on $\bbR$, and write $\dmu_n(x)=w_n(x) {\rm d}x+\dmu_{n,{\rm sing}}(x)$. Assume $x^* \in I \subseteq \supp(\mu)$ for some interval, $I$, so that: \\
(i) for all $n$, $\mu_{n,{\rm sing}}(I)=0$. \\
(ii) $\sup_{n \in \bbN}\sup_{t \in I} \frac{\wti{K}_n(t,t)}{n}=C<\infty$.\\
(iii) $\{\mu_n\}_{n=1}^\infty$ satisfies the $\alpha$-Nevai condition at $x^*$.\\
Then for any bounded, compactly supported function, $f$, with a finite number of points of discontinuity, 
\beq \label{varianceConclusion}
n^{\alpha-1}\Var \left( X_{f,\alpha,x^*}^{(n)}\right)=o \left(1\right)
\eeq
as $n \rightarrow \infty$.
\end{theorem}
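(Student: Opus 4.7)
My plan is to rewrite the variance so that the $\alpha$-Nevai condition can be applied directly, handle the size of the resulting expression using hypothesis (ii), and then dispatch discontinuities by a standard decomposition argument. Let me write $g_n(x) = f(n^\alpha(x - x^*))$. Using the reproducing property $\int K_n(x,y)^2 \dmu_n(y) = K_n(x,x)$, I would expand \eqref{eq:AlphaVariance} into
\[
\Var X_{f,\alpha,x^*}^{(n)} = \int g_n(x)\Bigl[\int \bigl(g_n(x) - g_n(y)\bigr) K_n(x,y)^2 \dmu_n(y)\Bigr] \dmu_n(x).
\]
This is the representation suggested by the introductory discussion of the Nevai condition in this section: the inner bracket is precisely what the $\alpha$-Nevai condition controls (after the substitution $x = x^* + s/n^\alpha$).

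Assume first that $f$ is continuous and compactly supported. Since $f$ (and hence $g_n$) is supported in a set of diameter $O(n^{-\alpha})$ about $x^*$, for $n$ large the support of $g_n$ lies inside $I$, so $\dmu_n = w_n\,dx$ there and the outer integral becomes $\int f(s)\,[\cdots]\,w_n(x^*+s/n^\alpha)\,n^{-\alpha}\,ds$. Fix $\varepsilon>0$. By condition (iii), for $n$ sufficiently large and uniformly in $s$ on the compact set $\supp f$,
\[
\Bigl|\int\bigl(f(s)-f(n^\alpha(y-x^*))\bigr)K_n(x^*+s/n^\alpha,y)^2\dmu_n(y)\Bigr|\leq \varepsilon\, K_n(x^*+s/n^\alpha,x^*+s/n^\alpha).
\]
Substituting this bound and using $w_n(t)K_n(t,t) = \wti K_n(t,t) \leq Cn$ from hypothesis (ii), I get $|\Var X_{f,\alpha,x^*}^{(n)}| \leq \varepsilon C\|f\|_\infty|\supp f|\,n^{1-\alpha}$, which yields \eqref{varianceConclusion} since $\varepsilon$ is arbitrary.

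The remaining issue — and the main (though routine) obstacle — is that $f$ may have finitely many discontinuities, whereas the $\alpha$-Nevai condition is only posited against continuous test functions. I will handle this by a standard decomposition. Given $\delta>0$, write $f = f_c + f_d$ where $f_c$ is continuous and compactly supported, $\|f_d\|_\infty \leq 2\|f\|_\infty$, and $\supp f_d$ is contained in a union of small open intervals around the discontinuities of $f$ of total Lebesgue measure at most $\delta$. Using $\Var(X+Y)\leq 2\Var X + 2\Var Y$, the $f_c$ contribution is handled by the continuous argument above. For the $f_d$ term, apply \eqref{eq:AlphaVariance}, throw away the negative cross-term, and use $(a-b)^2 \leq 2a^2+2b^2$ together with the reproducing property to obtain
\[
\Var X_{f_d,\alpha,x^*}^{(n)} \leq 4\|f_d\|_\infty^2 \int_{E_n} K_n(x,x)\dmu_n(x),
\]
where $E_n = x^* + n^{-\alpha}\supp f_d \subset I$ for $n$ large. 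Hypothesis (ii) gives $\int_{E_n}K_n(x,x)w_n(x)dx \leq Cn\cdot|E_n| \leq C\delta n^{1-\alpha}$, so this piece contributes at most $O(\delta \|f\|_\infty^2) n^{1-\alpha}$. Letting $\delta \to 0$ after $n\to\infty$ then yields \eqref{varianceConclusion} in full generality.
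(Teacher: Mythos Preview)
Your proof is correct and follows essentially the same approach as the paper's: rewrite the variance so the inner integral is the $\alpha$-Nevai expression, change variables $x = x^* + s/n^\alpha$, and use hypotheses (ii) and (iii) to control the continuous case, then decompose $f = f_c + f_d$ and bound the contribution of $f_d$ via the reproducing property and (ii). Your treatment of the decomposition is in fact slightly cleaner than the paper's, since you correctly invoke $\Var(X+Y)\le 2\Var X + 2\Var Y$ rather than treating the variance as additive.
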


\begin{proof}
For simplicity of notation, assume $x^*=0$ (this can be achieved by a shift of $f$) and let $X_{f,\alpha}^{(n)}=X_{f,\alpha,0}^{(n)}$. For $f$ continuous with $J=\supp(f)$ compact, write
\beq \label{variance1}
\begin{split}
& \Var\left( X_{f,\alpha}^{(n)}\right) \\ 
&=\int \int \left( f(n^\alpha x)^2- f(n^\alpha x) f(n^\alpha y)\right) K_n(x,y)^2 \dmu_n(x) \dmu_n(y)  \\
&=\int \int f(n^\alpha x) \left(f(n^\alpha x)-f(n^\alpha y) \right)K_n(x,y)^2\dmu_n(x) \dmu_n(y) \\
&=\int \int f(n^\alpha x) \chi_{J}(n^\alpha x)\left(f(n^\alpha x)-f(n^\alpha y) \right) K_n(x,y)^2\dmu_n(x) \dmu_n(y). \\
\end{split}
\eeq
For $n$ sufficiently large, $n^\alpha x \in J$ implies $x \in I$. Thus, for sufficiently large $n$, we may change variables $s=n^\alpha x$ to get
\beq \label{variance2}
\begin{split}
& n^{\alpha-1}\Var\left(X_{f,\alpha}^{(n)}\right) \\ 
&=n^{\alpha-1}\int_{x \in J/n^\alpha} \int f(n^\alpha x) \left(f(n^\alpha x)-f(n^\alpha y) \right) K_n(x,y)^2 \dmu_n(y)w_n(x){\rm d}x \\
&=\frac{1}{n}\int_{s \in (n^\alpha I)\cap J} \int f(s) \left(f(s)-f(n^\alpha y) \right) K_n\left(\frac{s}{n^\alpha},y\right)^2 \dmu_n(y)w_n\left(\frac{s}{n^\alpha}\right){\rm d}s \\
&=\frac{1}{n}\int_{s \in (n^\alpha I)\cap J} f(s)w_n\left(\frac{s}{n^\alpha} \right)\left( \int \left(f(s)-f(n^\alpha y) \right) K_n\left(\frac{s}{n^\alpha},y\right)^2 \dmu_n(y)\right){\rm d}s \\
&=\int_{s \in (n^\alpha I) \cap J} f(s)\frac{\wti{K}_n\left(\frac{s}{n^\alpha},\frac{s}{n^\alpha} \right)}{n}H(s){\rm d}s
\end{split}
\eeq
where
$H(s)= \int \left(f(s)-f(n^\alpha y) \right) \frac{K_n\left(\frac{s}{n^\alpha},y\right)^2}{K_n \left(\frac{s}{n^\alpha},\frac{s}{n^\alpha} \right)} \dmu_n(y)$. But the $\alpha$-Nevai condition at $x^*=0$ says that $H(s) \rightarrow 0$ as $n \rightarrow \infty$, uniformly for $s \in J$. Moreover, condition $(ii)$ of the theorem says that $\left|f(s)\frac{\wti{K}_n\left(\frac{s}{n^\alpha},\frac{s}{n^\alpha} \right)}{n}\right|$ is uniformly bounded for $s \in n^\alpha I$. Since the integration is carried out over the compact set $J$, we see that $\lim_{n \rightarrow \infty}n^{\alpha-1} \Var\left( X_{f,\alpha}^{(n)} \right)=0$.

Now, suppose that $f$ has a finite number of points of discontinuity. Let $t_1,\ldots t_k$ be these points and let $\varepsilon>0$. Then we can write
\beq \no
f=g+h
\eeq 
where $g$ is continuous with compact support and $h$ satisfies
\beq \no
\supp(h) \subseteq \bigcup_{i=1}^k \left[t_i-\frac{\varepsilon}{4 C \|f\|_\infty^2 k},t_i+\frac{\varepsilon}{4 C \|f\|_\infty^2 k} \right] \equiv \wti{J},
\eeq
$h$ is continuous except at the points $t_i$, and also $\| h \|_\infty \leq \|f \|_\infty$.
Clearly \eqref{varianceConclusion} holds for $g$. As for $h$, write
\beq \no
\begin{split}
& n^{\alpha-1}\Var\left(X_{h,\alpha}^{(n)}\right) \\ 
&=\frac{1}{n}\int_{s \in (n^\alpha I)\cap \wti{J}} h(s)w\left(\frac{s}{n^\alpha} \right)\left( \int \left(h(s)-h(n^\alpha y) \right) K_n\left(\frac{s}{n^\alpha},y\right)^2 \dmu_n(y)\right)ds \\
&\leq \frac{2 \|h \|_\infty^2}{n} \int_{s \in (n^\alpha I)\cap \wti{J}} w_n\left(\frac{s}{n^\alpha} \right)\left( \int  K_n\left(\frac{s}{n^\alpha},y\right)^2 \dmu_n(y)\right)ds \\
& \leq \frac{2 \|f \|_\infty^2}{n} \int_{s \in (n^\alpha I)\cap \wti{J}} w_n\left(\frac{s}{n^\alpha} \right)  K_n\left(\frac{s}{n^\alpha},\frac{s}{n^\alpha}\right)ds \\
&=2 \|f \|_\infty^2 \int_{s \in (n^\alpha I)\cap \wti{J}} \frac{\wti{K}_n\left(\frac{s}{n^\alpha},\frac{s}{n^\alpha}\right)}{n}ds \\
&\leq 2C \|f\|_\infty^2\int_{s \in \wti{J}}{\rm d}s \leq \varepsilon.
\end{split}
\eeq
Thus 
\beq \no
\begin{split}
& \limsup_{n \rightarrow \infty} n^{\alpha-1}\Var\left(X_{f,\alpha}^{(n)}\right) \\
&=\limsup_{n \rightarrow \infty} n^{\alpha-1}\Var\left(X_{g,\alpha}^{(n)}\right)+\limsup_{n \rightarrow \infty} n^{\alpha-1}\Var\left(X_{h,\alpha}^{(n)}\right) \leq \varepsilon
\end{split}
\eeq
for any $\varepsilon>0$, which implies $\lim_{n \rightarrow \infty}n^{\alpha-1} \Var\left( X_{f,\alpha}^{(n)} \right)=0$.
\end{proof}

We proceed to formulate a criterion for $\alpha$-Nevai.
 
\begin{theorem} \label{alphaNevai}
Let $\{\mu_n\}_{n=1}^\infty$ be a sequence of probability measures on $\bbR$, and write $\dmu_n(x)=w_n(x) {\rm d}x+\dmu_{n,{\rm sing}}(x)$. Let $x^* \in I \subseteq \supp(\mu)$ for some interval, $I$, so that: \\
(i) for all $n$, $\mu_{n,{\rm sing}}(I)=0$ and $w_n(x^*)>0$. \\
(ii) There exists a function, $\rho$, continuous on $I$ and satisfying $\rho(x^*)>0$, so that uniformly for $x \in I$, 
\beq \label{eq:totikconv1}
\frac{\wti{K}_n(x,x)}{n} \rightarrow \rho(x)
\eeq 
as $n \rightarrow \infty$. \\
(iii) Uniformly for $x \in I$ and $a,b$ in compact sets in $\bbR$
\beq \label{eq:lubconv}
\frac{\wti{K}_n \left(x+\frac{a}{\wti{K}_n(x,x)},x+\frac{b}{\wti{K}_n(x,x)}\right)}{\wti{K}_n(x,x)} \rightarrow \frac{\sin \pi\left(b-a \right)}{\pi \left(b-a \right)}
\eeq
as $n \rightarrow \infty$. \\
Then for any $0 \leq \alpha <1$, $\mu$ satisfies the $\alpha$-Nevai condition at $x^*$.
\end{theorem}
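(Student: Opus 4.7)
My plan is to reduce the $\alpha$-Nevai integral, via the microscopic change of variable $u = \wti{K}_n(z_n, z_n)(y - z_n)$ with $z_n := x^* + s/n^\alpha$, to an integral of $f(s) - f(s + c_n u)$ against the sine-kernel-squared measure. Here $c_n := n^\alpha/\wti{K}_n(z_n, z_n)$. The essential arithmetic making the argument work is that, by (ii) and continuity of $\rho$ at $x^*$, $\wti{K}_n(z_n, z_n)/n \to \rho(x^*) > 0$ uniformly in $s$ over any compact set, so that $c_n \sim n^{\alpha-1}/\rho(x^*) \to 0$ precisely because $\alpha < 1$: the microscopic scale $1/\wti{K}_n$ is strictly finer than the mesoscopic scale $1/n^\alpha$, so $f(n^\alpha y) = f(s + c_n u)$ is nearly $f(s)$ when measured on the microscopic scale.

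For the body I would assume $x^* = 0$ and split
\[
A_n(s) := \int \bigl(f(s) - f(n^\alpha y)\bigr)\frac{K_n(z_n, y)^2}{K_n(z_n, z_n)} d\mu_n(y)
\]
into contributions from $y \in I$ and $y \notin I$. Since $\supp f$ is compact and $|y|$ is bounded below for $y \notin I$, one has $f(n^\alpha y) = 0$ off $I$ for $n$ large, so the off-$I$ part reduces to $f(s) T_n(s)$, where $T_n(s)$ is the tail mass of the reproducing-kernel probability measure. On $I$, using (i) to replace $d\mu_n$ by $w_n\, dy$ and performing the change of variable gives
\[
\int_{U_n}\bigl(f(s) - f(s + c_n u)\bigr) g_n(u)\, du, \qquad g_n(u) := \frac{\wti{K}_n\bigl(z_n,\, z_n + u/\wti{K}_n(z_n,z_n)\bigr)^2}{\wti{K}_n(z_n, z_n)^2},
\]
with $U_n = \wti{K}_n(z_n, z_n)(I - z_n) \uparrow \bbR$. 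Splitting at $|u| = M$: on $|u| \leq M$, uniform continuity of $f$ together with $c_n \to 0$ makes $f(s) - f(s + c_n u)$ vanish uniformly in $s$ on any compact set and in $|u| \leq M$, while $g_n$ has mass bounded by $1$ there and, by (iii), converges uniformly on compacts to $\sin^2(\pi u)/(\pi u)^2$.

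To close the argument I would control the tails by mass conservation: the reproducing property gives $T_n(s) + \int_{U_n} g_n(u)\, du = 1$, so the combined tail contribution (off $I$ plus $|u| > M$) is bounded by $2\|f\|_\infty \bigl(1 - \int_{|u| \leq M} g_n(u)\, du\bigr)$, which by (iii) converges as $n \to \infty$ to $2\|f\|_\infty \bigl(1 - \int_{|u| \leq M} \sin^2(\pi u)/(\pi u)^2\, du\bigr)$, and this in turn tends to $0$ as $M \to \infty$ since $\int_\bbR \sin^2(\pi u)/(\pi u)^2\, du = 1$. The hard part will be coordinating this double limit, $n \to \infty$ followed by $M \to \infty$, while preserving uniformity in $s$; this is precisely what the uniform-in-$x$ hypotheses in (ii) and (iii) supply, applied at the moving base point $x = z_n$, which lies in $I$ for $n$ large uniformly for $s$ in any fixed compact set.
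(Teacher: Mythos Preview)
Your proposal is correct and follows essentially the same route as the paper: both arguments reduce the $\alpha$-Nevai integral to the microscopic scale, invoke the uniform universality hypothesis (iii) at the moving base point $z_n=x^*+s/n^\alpha$, and use that $\int_{\bbR}\sin^2(\pi u)/(\pi u)^2\,du=1$ to control the tail mass, with (ii) guaranteeing $c_n=n^\alpha/\wti K_n(z_n,z_n)\to 0$ uniformly in $s$. The only organizational difference is that the paper performs the change of variable in two stages (first to the mesoscopic variable $r=n^\alpha y$, then to the microscopic variable $t$ via $r=s+t/n^{1-\alpha}$), isolates the mass-concentration statement $\int_{s-\delta}^{s+\delta}g_n(r)\,dr>1-\varepsilon$ as an intermediate lemma, and finishes with an approximate-identity argument in $r$; you go in one step to $u=\wti K_n(z_n,z_n)(y-z_n)$ and split directly at $|u|=M$, which is slightly more streamlined but equivalent.
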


\begin{proof}
Again, for simplicity of notation we assume $x^*=0$. 
Note that since $\int \frac{K_n\left( \frac{s}{n^\alpha},y \right)^2}{K_n\left( \frac{s}{n^\alpha},\frac{s}{n^\alpha} \right)}\dmu(y)=1$, showing $\alpha$-Nevai at $0$ is equivalent to showing
\beq \label{alpha-nevai1}
\lim_{n \rightarrow \infty} \int f\left(n^\alpha y \right)\frac{K_n\left( \frac{s}{n^\alpha},y \right)^2}{K_n\left( \frac{s}{n^\alpha},\frac{s}{n^\alpha} \right)}\dmu(y) =f(s)
\eeq
uniformly for $s$ in compact sets of $\bbR$. Now, for the integrand to be nonzero $n^\alpha y$ has to be in $\supp(f)$. For sufficiently large $n$, this implies that $y \in I$. Thus, writing $n^\alpha y = r$, we get for such $n$
\beq \label{alpha-nevai2}
\begin{split}
& \int f\left(n^\alpha y \right)\frac{K_n\left( \frac{s}{n^\alpha},y \right)^2}{K_n\left( \frac{s}{n^\alpha},\frac{s}{n^\alpha} \right)}\dmu(y)  = \int f\left(n^\alpha y \right)\frac{K_n\left( \frac{s}{n^\alpha},y \right)^2}{K_n\left( \frac{s}{n^\alpha},\frac{s}{n^\alpha} \right)} w(y){\rm d}y\\
&= \int f\left(r\right)\frac{K_n\left( \frac{s}{n^\alpha},\frac{r}{n^\alpha} \right)^2}{n^\alpha K_n\left( \frac{s}{n^\alpha},\frac{s}{n^\alpha} \right)}w\left(\frac{r}{n^\alpha} \right){\rm d}r.
\end{split}
\eeq

We shall first show that for any $\delta, \varepsilon>0$ there exists $N$ such that for any $n \geq N$, for all $s \in$ a fixed compact set $\subseteq \bbR$.
\beq \label{alpha-nevai3}
\int_{s-\delta}^{s+\delta}\frac{K_n\left( \frac{s}{n^\alpha},\frac{r}{n^\alpha} \right)^2}{n^\alpha K_n\left( \frac{s}{n^\alpha},\frac{s}{n^\alpha} \right)}w\left(\frac{r}{n^\alpha} \right)dr>1-\varepsilon.
\eeq
Thus, fix $\delta, \varepsilon>0$ and let $M>0$ be so large that
\beq \label{alpha-nevai4}
\int_{-M}^M \frac{\sin (\pi t)^2}{(\pi t)^2}dt > 1-\frac{\varepsilon}{2}.
\eeq

From the uniform convergence in \eqref{eq:totikconv1} and \eqref{eq:lubconv} (both in $x \in I$ and for $a,b$ in compacts), the continuity of $\rho$ at $0$ and the equicontinuity (for $t \in$ a compact set of $\bbR$) of the family $\frac{\sin(\pi t (\cdot))}{\pi t (\cdot)}$, it follows that for any sequence $x_n \rightarrow 0$,
\beq \label{eq:uniformuniv1}
\frac{\wti{K}_n \left(x_n+\frac{a}{n},x_n+\frac{b}{n}\right)}{\wti{K}_n(x_n,x_n)} \rightarrow \frac{\sin \pi \rho(0)\left(b-a \right)}{\pi \rho(0) \left(b-a \right)}
\eeq
uniformly for $a,b$ in compact subsets of $\bbR$.

Now, with the change of variables $r=s+\frac{t}{n^{1-\alpha}}$ we get
\beq \label{alpha-nevai6}
\begin{split}
& \int_{s-\delta}^{s+\delta}\frac{K_n\left( \frac{s}{n^\alpha},\frac{r}{n^\alpha} \right)^2}{n^\alpha K_n\left( \frac{s}{n^\alpha},\frac{s}{n^\alpha} \right)}w\left(\frac{r}{n^\alpha} \right){\rm d}r \\
&=\int_{-n^{1-\alpha}\delta}^{n^{1-\alpha}\delta}\frac{K_n\left( \frac{s}{n^\alpha},\frac{s}{n^\alpha}+\frac{t}{n} \right)^2}{n K_n\left( \frac{s}{n^\alpha},\frac{s}{n^\alpha} \right)}w\left(\frac{s}{n^\alpha}+\frac{t}{n} \right)dt \\
&=\int_{-n^{1-\alpha}\delta}^{n^{1-\alpha}\delta}\frac{\wti{K}_n\left( \frac{s}{n^\alpha},\frac{s}{n^\alpha}+\frac{t}{n} \right)^2}{ \wti{K}_n\left( \frac{s}{n^\alpha},\frac{s}{n^\alpha} \right)^2} \frac{\wti{K}_n\left( \frac{s}{n^\alpha},\frac{s}{n^\alpha} \right)}{n}{\rm d}t \\
& \geq \int_{-M/\rho(0)}^{M/\rho(0)}\frac{\wti{K}_n\left( \frac{s}{n^\alpha},\frac{s}{n^\alpha}+\frac{t}{n} \right)^2}{ \wti{K}_n\left( \frac{s}{n^\alpha},\frac{s}{n^\alpha} \right)^2} \frac{\wti{K}_n\left( \frac{s}{n^\alpha},\frac{s}{n^\alpha} \right)}{n}{\rm d}t 
\end{split}
\eeq
for sufficiently large $n$. But \eqref{eq:uniformuniv1} and \eqref{eq:totikconv1} imply that 
\beq \no
\begin{split}
&\lim_{n \rightarrow \infty}  \int_{-M/\rho(0)}^{M/\rho(0)}\frac{\wti{K}_n\left( \frac{s}{n^\alpha},\frac{s}{n^\alpha}+\frac{t}{n} \right)^2}{ \wti{K}_n\left( \frac{s}{n^\alpha},\frac{s}{n^\alpha} \right)^2} \frac{\wti{K}_n\left( \frac{s}{n^\alpha},\frac{s}{n^\alpha} \right)}{n}{\rm d}t  \\
& =\int_{-M/\rho(0)}^{M/\rho(0)}  \frac{\sin \pi \rho(0)\left( t\right)}{\pi \rho(0) \left(t\right)}\rho(0) {\rm d}t 
=\int_{-M}^{M}  \frac{\sin \pi \left( t\right)}{\pi \left(t\right)} {\rm d}t >1-\frac{\varepsilon}{2}
\end{split}
\eeq
by \eqref{alpha-nevai4}. This together with \eqref{alpha-nevai6} imply \eqref{alpha-nevai3}.

It remains to show that \eqref{alpha-nevai3} implies the theorem. For this purpose, let $g_n(r)=\frac{K_n\left( \frac{s}{n^\alpha},\frac{r}{n^\alpha} \right)^2}{n^\alpha K_n\left( \frac{s}{n^\alpha},\frac{s}{n^\alpha} \right)}w\left(\frac{r}{n^\alpha} \right)$. By \eqref{alpha-nevai1} and \eqref{alpha-nevai2}, we want to show that
\beq \label{alpha-nevai10}
\lim_{n \rightarrow \infty}\int f(r)g_n(r)dr=f(s)
\eeq
locally uniformly in $s \in \bbR$.

Let $\eta >0$. Let $\varepsilon<\min \left(\frac{\eta}{3 \|f\|_\infty},1 \right)$ and let $\delta>0$ be so small that for any $t \in  [s-\delta,s+\delta]$, $|f(s)-f(t)|<\frac{\eta}{3}$. Finally, let $f_{+,\delta}=\max\{f(t) \mid t \in [s-\delta,s+\delta]\}$ and $f_{-,\delta}=\min\{f(t) \mid t \in [s-\delta,s+\delta]\}$. 

Now, for $n$ such that \eqref{alpha-nevai3} holds, we have
\beq \label{alpha-nevai11}
\left|\int_{\bbR \setminus [s-\delta,s+\delta]}f(r)g_n(r){\rm d}r\right| < \varepsilon \|f\|_\infty<\frac{\eta}{3}
\eeq
since $\int g_n(r)dr \leq 1$. On the other hand, for such $n$,
\beq \no
f_{-,\delta}-\frac{\eta}{3}\leq f_{-,\delta}-\|f\|_\infty \varepsilon<f _{-,\delta}(1-\varepsilon) \leq \int_{s-\delta}^{s+\delta}f(r) g_n(r)dr \leq f_{+,\delta}
\eeq
which implies
\beq \label{alpha-nevai12}
f(s)-\frac{2\eta}{3}< \int_{s-\delta}^{s+\delta}f(r) g_n(r){\rm d}r <f(s)+\frac{\eta}{3}.
\eeq

Combining \eqref{alpha-nevai11} and \eqref{alpha-nevai12} we see that for sufficiently large $n$,
\beq \no
\begin{split}
&\left|\int f(r)g_n(r){\rm d}r-f(s) \right| \\
&\quad \leq \left|\int_{s-\delta}^{s+\delta}f(r) g_n(r){\rm d}r-f(s) \right|+\left|\int_{\bbR \setminus [s-\delta,s+\delta]}f(r)g_n(r){\rm d}r\right| \\
&\quad < \frac{2\eta}{3}+\frac{\eta}{3}=\eta.
\end{split}
\eeq
We are done.
\end{proof}
\begin{remark}
Clearly, one may take $\mu_n \equiv \mu$, constant in the theorems above, to get the appropriate theorems for a fixed measure.
\end{remark}

Before discussing the local law of large numbers, we digress to discuss the implications of our method for the Nevai condition. Theorem \ref{alphaNevai} together with Proposition \ref{prop:alphaNevai1} imply that any measure satisfying conditions \emph{(i)--(iii)} at $x^*$ also satisfies the Nevai condition there. But in fact, less is needed for just the Nevai condition:

\begin{theorem} \label{thm:weaknevai}
Let $\dmu(x)=w(x) {\rm d}x+\dmu_{{\rm sing}}(x)$. Let $x^* \in \supp(\mu)$ so that: \\
(i) The point $x^*$ is a Lebesgue point of $\mu$ in the sense that $w(x^*)>0$, 
\beq \label{eq:strongLeb1}
\lim_{\varepsilon \rightarrow 0^+}\frac{\mu_{{\rm sing}} \left(x^*-\varepsilon,x^*+ \varepsilon \right)}{2 \varepsilon}=0,
\eeq 
and
\beq \label{eq:strongLeb2}
\lim_{\varepsilon \rightarrow 0^+}\int_{x^*-\varepsilon}^{x^*+\varepsilon}\frac{ \left|w(x)-w(x^*)\right|}{2 \varepsilon}{\rm d}x=w(x^*).
\eeq  
(ii) Uniformly for $a,b$ in compact sets in $\bbR$
\beq \label{eq:lubconv222}
\frac{K_n \left(x^*+\frac{a}{\wti{K}_n(x^*,x^*)},x^*+\frac{b}{K_n(x^*,x^*)}\right)}{K_n(x^*,x^*)} \rightarrow \frac{\sin \pi\left(b-a \right)}{\pi \left(b-a \right)}
\eeq
as $n \rightarrow \infty$. \\
Then $\mu$ satisfies the Nevai condition at $x^*$.
\end{theorem}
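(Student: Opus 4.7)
The plan is to show that the measures $\nu_n({\rm d}y)=\frac{K_n(x^*,y)^2}{K_n(x^*,x^*)}\,{\rm d}\mu(y)$ converge weakly to $\delta_{x^*}$; the Nevai condition \eqref{eq:nevaicondition} then follows from a standard splitting argument based on uniform continuity of $f$ near $x^*$ together with the reproducing property, which gives $\int {\rm d}\nu_n=1$. Since each $\nu_n$ has total mass one, weak convergence to $\delta_{x^*}$ reduces to showing that for every $\delta>0$,
\begin{equation*}
\lim_{n\to\infty}\int_{|y-x^*|<\delta}\frac{K_n(x^*,y)^2}{K_n(x^*,x^*)}\,{\rm d}\mu(y)=1.
\end{equation*}
The strategy mirrors the proof of Theorem \ref{alphaNevai}: capture essentially all the mass already inside the microscopic window $|y-x^*|<M/L_n$, where $L_n:=K_n(x^*,x^*)$ and $M$ is large, using universality (ii) to identify the limit and the Lebesgue-point hypotheses to control the weight and the singular part.

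Set $L_n=K_n(x^*,x^*)$; since (ii) forces $L_n\to\infty$, any fixed $\delta>0$ eventually contains the microscopic window $(x^*-M/L_n,x^*+M/L_n)$. Split ${\rm d}\mu=w(y)\,{\rm d}y+{\rm d}\mu_{\rm sing}(y)$. For the absolutely continuous piece, change variables $y=x^*+t/L_n$ and write $w(x^*+t/L_n)=w(x^*)+[w(x^*+t/L_n)-w(x^*)]$. The constant-$w(x^*)$ piece becomes
\begin{equation*}
w(x^*)\int_{-M}^{M}\frac{K_n(x^*,x^*+t/L_n)^2}{L_n^2}\,{\rm d}t,
\end{equation*}
which by universality (ii) and dominated convergence tends to an integral of the rescaled squared sine kernel on $[-M,M]$ that approaches $1$ as $M\to\infty$. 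The correction term involving $w(x^*+t/L_n)-w(x^*)$ is bounded in absolute value by
\begin{equation*}
\Bigl(\sup_{|t|\le M}\frac{K_n(x^*,x^*+t/L_n)^2}{L_n^2}\Bigr)\cdot L_n\int_{|y-x^*|<M/L_n}|w(y)-w(x^*)|\,{\rm d}y,
\end{equation*}
where the first factor is bounded uniformly in $n$ (for each fixed $M$) by (ii), and the second factor is $o(1)$ as $n\to\infty$ by the Lebesgue-point hypothesis \eqref{eq:strongLeb2} (read as the standard statement $(2\varepsilon)^{-1}\int_{x^*-\varepsilon}^{x^*+\varepsilon}|w-w(x^*)|\,{\rm d}y\to 0$).

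For the singular part, using the same uniform bound $K_n(x^*,y)^2/L_n^2\le C(M)$ for $|y-x^*|\le M/L_n$, the hypothesis \eqref{eq:strongLeb1} will give
\begin{equation*}
\int_{|y-x^*|<M/L_n}\frac{K_n(x^*,y)^2}{L_n}\,{\rm d}\mu_{\rm sing}(y)\le C(M)\,L_n\cdot\mu_{\rm sing}\bigl(x^*-\tfrac{M}{L_n},x^*+\tfrac{M}{L_n}\bigr)\to 0
\end{equation*}
for each fixed $M$. Combining the three contributions, given $\varepsilon>0$ I will first choose $M$ large so that the limiting sine-kernel integral exceeds $1-\varepsilon$, and then let $n\to\infty$, to conclude $\liminf_n\int_{|y-x^*|<M/L_n}{\rm d}\nu_n\ge 1-\varepsilon$. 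Since $M/L_n\to 0$, the same estimate holds for any fixed window $|y-x^*|<\delta$, and the total mass being one forces $\int_{|y-x^*|<\delta}{\rm d}\nu_n\to 1$ for every $\delta>0$.

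The main obstacle is the simultaneous control of three microscopic error terms: the variation of $w$ on scale $1/L_n$, the singular mass on that scale, and the pointwise remainder in universality. The Lebesgue-point assumptions \eqref{eq:strongLeb1}--\eqref{eq:strongLeb2} are tailored precisely to the rate at which $K_n(x^*,\cdot)^2$ concentrates, and the locally uniform bound from universality supplies the equicontinuity required to convert those Lebesgue rates into genuine smallness after multiplication by $L_n$. A secondary subtlety relative to Theorem \ref{alphaNevai} is that we do not assume a Totik-type pointwise rate for $L_n$ in terms of $n$, so the identification of the limiting mass rests purely on (ii) without reference to any limiting density $\rho(x^*)$.
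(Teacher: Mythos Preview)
Your proposal is correct and follows essentially the same route as the paper's proof: reduce the Nevai condition to showing that $\nu_n$ has mass at least $1-\varepsilon$ on every fixed $\delta$-window, then capture that mass already on the microscopic window $|y-x^*|<M/L_n$ by splitting $\mu$ into its absolutely continuous and singular parts, using universality to identify the sine-kernel limit and the Lebesgue-point hypotheses to kill the $w$-variation and the singular contribution. The only cosmetic differences are that the paper scales by $\wti{K}_n(x^*,x^*)=w(x^*)K_n(x^*,x^*)$ rather than your $L_n=K_n(x^*,x^*)$ (immaterial since they differ by the fixed factor $w(x^*)>0$), and that the paper justifies $L_n\to\infty$ via $\mu\{x^*\}=0$ (which follows from \eqref{eq:strongLeb1}) rather than by appealing to (ii); you also correctly read \eqref{eq:strongLeb2} as the intended statement that the averaged deviation tends to $0$.
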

\begin{proof}
It suffices to show that for any $\delta$ and any $\varepsilon$, for sufficiently large $n$,
\beq \label{eq:nevai}
\int_{x^*-\delta}^{x^*+\delta} \frac{K_n\left(x^*,y \right)^2}{K_n\left(x^*,x^*\right)} \dmu (y) \geq 1-\varepsilon.
\eeq
First, note that, since $\mu\{x^*\}=0$, $\wti{K}_n(x^*,x^*) \rightarrow \infty$ as $n \rightarrow \infty$. Thus, for any fixed $M>0$ and any $\delta>0$, for all sufficiently large $n$, $\frac{M}{\wti{K}_n(x^*,x^*)}<\delta$. Now, choose $M$ as in \eqref{alpha-nevai4} and write
\beq
\begin{split}
&\int_{x^*-\frac{M}{\wti{K}_n(x^*,x^*)}}^{x^*+\frac{M}{\wti{K}_n(x^*,x^*)}} \frac{K_n\left(x^*,y \right)^2}{K_n\left(x^*,x^*\right)} \dmu (y) \\
&=\int_{x^*-\frac{M}{\wti{K}_n(x^*,x^*)}}^{x^*+\frac{M}{\wti{K}_n(x^*,x^*)}} \frac{K_n\left(x^*,y \right)^2}{K_n\left(x^*,x^*\right)} \dmu_{{\rm sing}} (y)+
\int_{x^*-\frac{M}{\wti{K}_n(x^*,x^*)}}^{x^*+\frac{M}{\wti{K}_n(x^*,x^*)}} \frac{K_n\left(x^*,y \right)^2}{K_n\left(x^*,x^*\right)}  w(y) {\rm d}y.
\end{split}
\eeq

For the first term, note that, by \eqref{eq:lubconv222}, there exists $C>0$ so that $ \sup_{y \in \left [x^*-\frac{M}{\wti{K}_n(x^*,x^*)},x^*+\frac{M}{\wti{K}_n(x^*,x^*)} \right], n \in \bbN} \left| \frac{K_n \left(x^*,y \right)}{K_n\left(x^*,x^* \right)}\right| \leq C$. Thus
\beq \no
\begin{split}
&\int_{x^*-\frac{M}{\wti{K}_n(x^*,x^*)}}^{x^*+\frac{M}{\wti{K}_n(x^*,x^*)}} \frac{K_n\left(x^*,y \right)^2}{K_n\left(x^*,x^*\right)} \dmu_{{\rm sing}} (y) \\
&\quad=\int_{x^*-\frac{M}{\wti{K}_n(x^*,x^*)}}^{x^*+\frac{M}{\wti{K}_n(x^*,x^*)}} \frac{K_n\left(x^*,y \right)^2}{K_n\left(x^*,x^*\right)^2}K_n(x^*,x^*)  \dmu_{{\rm sing}} (y) \\
&\quad \leq C^2 (M+1) \frac{K_n(x^*,x^*)}{M+1} \mu_{{\rm sing}}\left(x^*-\frac{M}{\wti{K}_n(x^*,x^*)},x^*+\frac{M}{\wti{K}_n(x^*,x^*)} \right) \rightarrow 0
\end{split}
\eeq
as $n \rightarrow \infty$, by \eqref{eq:strongLeb1}.

For the second term change variables to $y=x^*+\frac{t}{\wti{K}_n(x^*,x^*)}$ to get
\beq \no
\begin{split}
&\int_{x^*-\frac{M}{\wti{K}_n(x^*,x^*)}}^{x^*+\frac{M}{\wti{K}_n(x^*,x^*)}} \frac{K_n\left(x^*,y \right)^2}{K_n\left(x^*,x^*\right)}  w(y) {\rm d}y \\
&\quad=\int_{-M}^{M} \frac{K_n\left(x^*,x^*+\frac{t}{\wti{K}_n(x^*,x^*)} \right)^2}{K_n\left(x^*,x^*\right)^2}  \frac{w\left(x^*+\frac{t}{\wti{K}_n(x^*,x^*)} \right)}{w(x^*)} {\rm d}t.
\end{split}
\eeq
Now, again by \eqref{eq:lubconv222}, for some $\wti{C}>0$,
\beq \no
\begin{split}
&\Bigg|\int_{-M}^{M} \frac{K_n\left(x^*,x^*+\frac{t}{\wti{K}_n(x^*,x^*)} \right)^2}{K_n\left(x^*,x^*\right)^2}  \frac{w\left(x^*+\frac{t}{\wti{K}_n(x^*,x^*)} \right)}{w(x^*)} {\rm d}t \\
&\quad-\int_{-M}^{M} \frac{K_n\left(x^*,x^*+\frac{t}{\wti{K}_n(x^*,x^*)} \right)^2}{K_n\left(x^*,x^*\right)^2} {\rm d}t \Bigg| \\
&\leq \wti{C} \int_{-M}^M\left|w\left(x^*+\frac{t}{\wti{K}_n(x^*,x^*)} \right)-w(x^*) \right| {\rm d}t \rightarrow 0
\end{split}
\eeq
by \eqref{eq:strongLeb2}. It follows that
\beq \no
\begin{split}
&\lim_{n \rightarrow \infty}\int_{x^*-\frac{M}{\wti{K}_n(x^*,x^*)}}^{x^*+\frac{M}{\wti{K}_n(x^*,x^*)}} \frac{K_n\left(x^*,y \right)^2}{K_n\left(x^*,x^*\right)}  w(y) {\rm d}y \\
&\quad=\int_{-M}^M\frac{\sin \pi t}{\pi t}{\rm d}t
\end{split}
\eeq
which, by the choice of $M$, finishes the proof.
\end{proof}

We close this digression into the topic of the Nevai condition with two immediate corollaries of Theorem \ref{thm:weaknevai}.

\begin{corollary}
Let $\mu$ be a compactly supported, finite measure on $\bbR$. Assume that $\mu$ is absolutely continuous on a neighborhood of $x^*$, $w$ is continuous at $x^*$ and $w(x^*)>0$. Assume further that
\beq \label{eq:wiggle}
\lim_{n \rightarrow \infty}\frac{K_n \left(x^*+\frac{a}{n},x^*+\frac{a}{n} \right)}{K_n(x^*,x^*)}=1
\eeq
uniformly for $a$ in compact subsets of $\bbR$. Then the Nevai condition holds at $x^*$.
\end{corollary}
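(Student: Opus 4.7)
The approach is to deduce the corollary from Theorem \ref{thm:weaknevai}. Two hypotheses must be verified at $x^*$: the strong Lebesgue point condition (i) and the sine-kernel universality \eqref{eq:lubconv222}.

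\textbf{Lebesgue point condition.} Since $\mu$ is absolutely continuous on a neighborhood $U$ of $x^*$, for all sufficiently small $\varepsilon>0$ we have $(x^*-\varepsilon,x^*+\varepsilon)\subseteq U$ and hence $\mu_{\text{sing}}(x^*-\varepsilon,x^*+\varepsilon)=0$, making \eqref{eq:strongLeb1} immediate. The continuity of $w$ at $x^*$ together with $w(x^*)>0$ yields that $x^*$ is a classical Lebesgue point of $w$, which gives \eqref{eq:strongLeb2} directly.

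\textbf{Universality.} This is the substantive step: derive the off-diagonal universality \eqref{eq:lubconv222} from the diagonal wiggle condition \eqref{eq:wiggle} together with the local regularity of $\mu$. This implication is essentially Lubinsky's approach to universality via the Christoffel function (see \cite{lubinsky1, lubinskyUniv2, Simon-ext, totik}). The rough strategy is as follows. First, the reproducing-kernel inequality $|K_n(x,y)|^2\le K_n(x,x)K_n(y,y)$ combined with \eqref{eq:wiggle} shows that the normalized kernels
\beq \no
f_n(a,b) := \frac{K_n\left(x^*+\frac{a}{\wti{K}_n(x^*,x^*)},\, x^*+\frac{b}{\wti{K}_n(x^*,x^*)}\right)}{K_n(x^*,x^*)}
\eeq
are locally uniformly bounded on compact sets in $(a,b)$. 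Bernstein--Markov type estimates for polynomials then yield equicontinuity, so a normal families argument extracts subsequential limits which, by degree considerations, are entire functions of exponential type at most $\pi$. Local absolute continuity of $\mu$, continuity of $w$ at $x^*$, and the diagonal normalization provided by \eqref{eq:wiggle} together pin down the unique limit as the sine kernel $\sin\pi(b-a)/(\pi(b-a))$, giving \eqref{eq:lubconv222}.

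Once \eqref{eq:lubconv222} is in hand, Theorem \ref{thm:weaknevai} applies and delivers the Nevai condition at $x^*$. The main obstacle is the derivation of full off-diagonal sine-kernel asymptotics from the purely diagonal input \eqref{eq:wiggle}; here the wiggle condition plays exactly the role that the classical convergence $K_n(x^*,x^*)/n\to \rho(x^*)/w(x^*)$ plays in the regular setting (cf.\ Proposition \ref{prop:totik}), and it is this point where one must invoke the Lubinsky machinery rather than any self-contained calculation.
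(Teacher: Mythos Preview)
Your proposal is correct and follows the same route as the paper: verify conditions (i) and (ii) of Theorem~\ref{thm:weaknevai}, with (i) immediate from local absolute continuity and continuity of $w$, and (ii) obtained by invoking Lubinsky's universality result \cite[Theorem~1.1]{lubinskyUniv2}. The paper simply cites Lubinsky for (ii), whereas you sketch the normal-families/entire-function argument behind it, but the logical structure is identical.
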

\begin{proof}
The continuity of $w$ clearly implies condition $(i)$ of Theorem \ref{thm:weaknevai}. In addition, Lubinsky shows \cite[Theorem 1.1]{lubinskyUniv2} that these conditions imply $(ii)$ as well.
\end{proof}
\begin{remark}
Remark (d) of \cite{lubinskyUniv2} implies we that can replace \eqref{eq:wiggle} with the requirement that
\beq \no
\limsup_{n\rightarrow \infty} \frac{K_n\left(x^*+\frac{a}{n},x^*+\frac{a}{n} \right)}{K_n(x^*,x^*)} \leq 1.
\eeq
\end{remark}

\begin{corollary} 
If $J_\omega$ is an ergodic family of Jacobi matrices and $\mu_\omega$ is the corresponding family of spectral measures, then for a.e.\ $\omega$, the Nevai condition holds for a.e.\ $x^*$ with respect to the absolutely continuous part of $\mu_\omega$.
\end{corollary}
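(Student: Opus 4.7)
The plan is to apply Theorem \ref{thm:weaknevai} pointwise, verifying its two hypotheses for almost every $x^*$ with respect to the absolutely continuous part of $\mu_\omega$. Write $\dmu_\omega(x) = w_\omega(x)\,{\rm d}x + \dmu_{\omega,\mathrm{sing}}(x)$, and let $\Sigma_{\mathrm{ac}}(\omega) = \{x : w_\omega(x) > 0\}$ be the essential support of the absolutely continuous part.

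First I would handle hypothesis (i). For each fixed $\omega$, the Lebesgue differentiation theorem applied to the locally integrable function $w_\omega$ gives \eqref{eq:strongLeb2} at Lebesgue-a.e.\ $x^*$, and applied to the singular measure $\mu_{\omega,\mathrm{sing}}$ gives \eqref{eq:strongLeb1} at Lebesgue-a.e.\ $x^*$ (since a singular measure has zero symmetric derivative with respect to Lebesgue measure almost everywhere). The condition $w_\omega(x^*) > 0$ picks out exactly $\Sigma_{\mathrm{ac}}(\omega)$. Thus, for each $\omega$, the set of $x^*$ satisfying (i) has full measure with respect to $w_\omega(x)\,{\rm d}x$, i.e.\ with respect to the absolutely continuous part of $\mu_\omega$.

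Next I would invoke the Avila-Last-Simon universality result \cite{als} for hypothesis (ii). That work establishes that, for an ergodic family of Jacobi matrices, the universality limit \eqref{eq:lubconv222} (with the sine kernel, uniformly for $a,b$ in compacts of $\mathbb{R}$) holds for a.e.\ $\omega$ and Lebesgue-a.e.\ $x^* \in \Sigma_{\mathrm{ac}}(\omega)$. So the set of $(\omega, x^*)$ at which (ii) holds has full measure with respect to $d\mathbb{P}(\omega)\, w_\omega(x^*)\,dx^*$ on the relevant product.

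Combining the two ingredients via Fubini (or just noting that both exceptional sets are Lebesgue-null in $x^*$ for a.e.\ $\omega$), I conclude that for a.e.\ $\omega$, both (i) and (ii) of Theorem \ref{thm:weaknevai} are satisfied at Lebesgue-a.e.\ $x^* \in \Sigma_{\mathrm{ac}}(\omega)$, which is the same as $\mu_\omega^{\mathrm{ac}}$-a.e.\ $x^*$. Theorem \ref{thm:weaknevai} then yields the Nevai condition at each such $x^*$, proving the corollary. The only nontrivial ingredient here is the deep universality theorem of \cite{als}; everything else is a standard measure-theoretic wrapping, so I do not expect a real obstacle beyond correctly citing that result in the form needed.
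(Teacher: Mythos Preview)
Your proposal is correct and follows essentially the same approach as the paper: the paper's proof simply cites \cite{als} for condition~(ii) of Theorem~\ref{thm:weaknevai} and dismisses condition~(i) as holding ``by standard arguments,'' which are precisely the Lebesgue differentiation facts you spell out. Your version is a fleshed-out form of the paper's two-line proof, with no substantive difference in strategy.
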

\begin{remark}
For background on ergodic Jacobi matrices and their spectral measures see \cite{als} and references therein.
\end{remark}
\begin{proof}
Avila, Last and Simon \cite{als} show that for a.e.\ $\omega$, condition $(ii)$ of Theorem \ref{thm:weaknevai} holds for a.e.\ $x^*$ with respect to the absolutely continuous part of $\mu_\omega$. In addition, condition $(i)$ holds for a.e.\ $x^*$ with respect to the absolutely continuous part of $\mu_\omega$ by standard arguments.
\end{proof}

From Theorems \ref{VarianceDecay} and \ref{th:generalexponentialbounds} we obtain

\begin{theorem}[Local Law of Large Numbers for General OPE]\label{LLLN1}
Let $\{\mu_n\}_{n=1}^\infty$ be a sequence of probability measures on $\bbR$, and write $\dmu_n(x)=w_n(x) {\rm d}x+\dmu_{n,{\rm sing}}(x)$. Let $x^* \in I \subseteq \supp(\mu)$ for some interval, $I$, so that: \\
(i) for all $n$, $\mu_{n,{\rm sing}}(I)=0$ and $w_n(x^*)>0$. \\
(ii) There exists a function, $\rho$, continuous on $I$ and satisfying $\rho(x^*)>0$, so that uniformly for $x \in I$, 
\beq \label{eq:totikconv122}
\frac{\wti{K}_n(x,x)}{n} \rightarrow \rho(x)
\eeq 
as $n \rightarrow \infty$. \\
(iii) Uniformly for $x \in I$ and $a,b$ in compact sets in $\bbR$
\beq \label{eq:lubconv333}
\frac{\wti{K}_n \left(x+\frac{a}{\wti{K}_n(x,x)},x+\frac{b}{\wti{K}_n(x,x)}\right)}{\wti{K}_n(x,x)} \rightarrow \frac{\sin \pi\left(b-a \right)}{\pi \left(b-a \right)}
\eeq
as $n \rightarrow \infty$. \\
Then for any compactly supported, bounded function, $f$, with a finite number of points of discontinuity, and any $\eps>0$ 
\beq \label{eq:LCI}
\mathbb P\left( n^{\alpha}  \left|\frac{X_{f,\alpha,x^*}^{(n)}}{n} 
-\frac{\EE f\left(n^\alpha (x-x^*) \right)}{n}\right|\geq \eps\right)\leq 2
{\rm e}^{-\frac{\eps  n^{(1-\alpha)}}{6 \|f\|_\infty} }, 
\eeq
for  $n\in \bbN$ sufficiently large. Hence in particular,
\beq \label{eq:asconvergence1}
\lim_{n \rightarrow \infty} n^\alpha \left (\frac{1}{n} X_{f,\alpha,x^*}^{(n)}-\int_I f\left(n^\alpha (x-x^*)\right)\rho(x){\rm d}x \right)=0,
\eeq
almost surely. 
\end{theorem}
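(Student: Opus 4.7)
The plan is to stitch together the three pillars already built in the paper: the $\alpha$-Nevai criterion of Theorem \ref{alphaNevai}, the variance estimate of Theorem \ref{VarianceDecay}, and the general concentration inequality of Theorem \ref{th:generalexponentialbounds}. Conditions (i)--(iii) of Theorem \ref{LLLN1} match verbatim the hypotheses of Theorem \ref{alphaNevai}, so the first step is to invoke that theorem and conclude that $\{\mu_n\}$ satisfies the $\alpha$-Nevai condition at $x^*$ for every $0\le \alpha <1$. To feed this into Theorem \ref{VarianceDecay} I would shrink $I$, if necessary, to a compact subinterval $I'$ containing $x^*$ on which $\rho$ is continuous and bounded; uniform convergence in (ii) together with the continuity of $\rho$ on $I'$ then automatically provides $\sup_n \sup_{t \in I'} \wti{K}_n(t,t)/n < \infty$, so Theorem \ref{VarianceDecay} applies and yields
\beq \no
\Var X_{f,\alpha,x^*}^{(n)} = o\!\left(n^{1-\alpha}\right), \qquad n \to \infty,
\eeq
for any compactly supported, bounded $f$ with finitely many discontinuities (note that for $n$ large the support of $f(n^\alpha(\cdot - x^*))$ lies inside $I'$, which is what the proof of Theorem \ref{VarianceDecay} really uses).

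Next I would feed this variance bound into the concentration inequality \eqref{eq:generalexponentialbounds}, applied with $\eps$ replaced by $\eps n^{1-\alpha}$. The key observation is that, since $\Var X_{f,\alpha,x^*}^{(n)} = o(n^{1-\alpha})$, for every fixed $\eps > 0$ and all sufficiently large $n$ we have $\eps n^{1-\alpha} \ge \tfrac{2A \Var X_{f,\alpha,x^*}^{(n)}}{3\|f\|_\infty}$, so only the linear (Poisson-type) branch of \eqref{eq:generalexponentialbounds} is active. This gives
\beq \no
\mathbb P\!\left(\,\bigl|X_{f,\alpha,x^*}^{(n)} - \mathbb E X_{f,\alpha,x^*}^{(n)}\bigr| \ge \eps n^{1-\alpha}\right) \le 2\exp\!\left(-\frac{\eps\, n^{1-\alpha}}{6\|f\|_\infty}\right),
\eeq
which is precisely the concentration bound \eqref{eq:LCI} once one rewrites $\eps n^{1-\alpha} = n^\alpha \cdot n(\eps/n)$.

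For the almost sure statement \eqref{eq:asconvergence1} I would first show that the centering in \eqref{eq:LCI} can be replaced by $\int_I f(n^\alpha(x-x^*))\rho(x)\,{\rm d}x$ up to a term of order $o(n^{-\alpha})$. Because $f$ has compact support and $\mu_{n,\text{sing}}(I)=0$, for large $n$ one has
\beq \no
\frac{\mathbb E X_{f,\alpha,x^*}^{(n)}}{n} - \int_I f\!\left(n^\alpha(x-x^*)\right)\rho(x)\,{\rm d}x = \int f\!\left(n^\alpha(x-x^*)\right)\!\left(\tfrac{\wti{K}_n(x,x)}{n} - \rho(x)\right) {\rm d}x,
\eeq
and the change of variables $u=n^\alpha(x-x^*)$, combined with the uniform convergence $\wti{K}_n(x,x)/n \to \rho(x)$ on $I'$, shows the right-hand side is $o(n^{-\alpha})$. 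Multiplying by $n^\alpha$ yields the required shift. Combining this with the exponential bound above (whose summability in $n$ is immediate for any fixed $\eps > 0$) and applying the Borel--Cantelli lemma gives \eqref{eq:asconvergence1}.

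The only step requiring a little care is the rate estimate for the mean, because we need convergence in \eqref{eq:totikconv122} to be strong enough to survive multiplication by $n^\alpha$ after the change of variables; this is exactly supplied by the \emph{uniform} convergence hypothesis in (ii). Everything else is bookkeeping: the $\alpha$-Nevai condition is obtained for free from (i)--(iii) via Theorem \ref{alphaNevai}, and the passage from the variance bound to the exponential tail inequality is a direct substitution into Theorem \ref{th:generalexponentialbounds}.
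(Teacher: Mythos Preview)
Your proposal is correct and follows essentially the same route as the paper: invoke Theorem \ref{alphaNevai} to obtain the $\alpha$-Nevai condition from (i)--(iii), feed this into Theorem \ref{VarianceDecay} for the variance bound $o(n^{1-\alpha})$, substitute into Theorem \ref{th:generalexponentialbounds} with $\eps \mapsto \eps n^{1-\alpha}$ so that only the linear branch survives, and then use the uniform convergence in (ii) together with Borel--Cantelli for the almost sure statement. The only difference is cosmetic: the paper leaves the appeal to Theorem \ref{alphaNevai} and the boundedness of $\wti{K}_n/n$ on a compact subinterval implicit, whereas you spell them out.
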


\begin{proof}
A standard application of the Borel-Cantelli Lemma says that the local concentration inequality \eqref{eq:LCI} leads to
\beq \no
\lim_{n \rightarrow \infty} n^\alpha \left (\frac{1}{n} X_{f,\alpha,x^*}^{(n)}-\frac{\EE f\left(n^\alpha (x-x^*) \right)}{n}\right)=0,
\eeq
with probability one. Now recall that
\beq \no
\frac{\EE f\left(n^\alpha (x-x^*) \right)}{n^{1-\alpha}}=\int f \left(n^\alpha(x-x^*) \right) \frac{K_n(x,x)}{n^{1-\alpha}} \dmu(x) 
\eeq
and note that for any compactly supported function, $f$, for sufficiently large $n$
\beq \no
\int f \left(n^\alpha(x-x^*) \right) \frac{K_n(x,x)}{n^{1-\alpha}} \dmu(x) =\int_I f \left(n^\alpha(x-x^*) \right) \frac{K_n(x,x)}{n^{1-\alpha}} w(x) {\rm d}x 
\eeq
and so, if $f$ is also bounded,
\beq \label{eq:COVar}
\begin{split}
& \left|\int_I f \left(n^\alpha(x-x^*) \right) \frac{K_n(x,x)}{n^{1-\alpha}} w(x) {\rm d}x -n^\alpha \int_I f \left(n^\alpha(x-x^*) \right) \rho(x) {\rm d}x\right| \\
&=\left|\int_{s \in \supp(f)} f \left(s \right)\left( \frac{\wti{K}_n\left(x^*+\frac{s}{n^\alpha},x^*+\frac{s}{n^\alpha}\right)}{n}- \rho\left(x^*+\frac{s}{n^\alpha}\right)\right) {\rm d}s\right| \\
&\leq \|f \|_\infty \int_{s \in \supp(f)} \left| \frac{\wti{K}_n\left(x^*+\frac{s}{n^\alpha},x^*+\frac{s}{n^\alpha}\right)}{n}- \rho\left(x^*+\frac{s}{n^\alpha}\right)\right| {\rm d}s \rightarrow 0
\end{split}
\eeq
as $n \rightarrow \infty$, by the uniform convergence of $\frac{\wti{K}_n\left(x,x \right)}{n}$ to $\rho(x)$ and since $f$ has compact support. Thus \eqref{eq:LCI} implies \eqref{eq:asconvergence1}.

As for \eqref{eq:LCI}, this follows as in the proof of Theorem \ref{thm:generalmacroLLN} by replacing $\varepsilon$ in Theorem \ref{th:generalexponentialbounds} by $\varepsilon n^{1-\alpha}$ and using Theorem \ref{VarianceDecay} to see that for sufficiently large $n$
\beq \no
\min \left( \frac{n^{2-2\alpha}\eps^2}{4 A \Var X_f},\frac{n^{1-\alpha}\eps}{6\|f\|_\infty}\right)=\frac{n^{1-\alpha}\eps}{6\|f\|_\infty}.
\eeq
\end{proof}

 \begin{remark}
As the change of variables in \eqref{eq:COVar} shows, the scaling with $n^{1-\alpha}$ in \eqref{eq:LCI} is the correct one.  Indeed, $n^{\alpha} \int f\left(n^{\alpha} (x-x^*) \right) \rho(x) {\rm d}x$ converges to $\rho(x^*) \int f(s) {\rm d}s$ as $n\to \infty$.
\end{remark}

Using Propositions \ref{prop:totik} and \ref{prop:compactuniversal} we immediately get 

\begin{corollary}[=Theorem \ref{thm:LLLN}] \label{cor:compactLLLN}
Let $\dmu(x)=w(x){\rm d}x+\dmu_{{\rm sing}}(x)$ be a regular measure with compact support, $E \subseteq \bbR$. Suppose $I$ is a closed interval in the interior of $E$ such that $\mu$ is absolutely continuous on a neighborhood of $I$ and $w$ is continuous and nonvanishing on $I$. 
Then for any compactly supported, bounded function, $f$, with a finite number of points of discontinuity, and any $\eps>0$ 
\beq \label{eq:LCI3}
\mathbb P\left( n^{\alpha}  \left|\frac{X_{f,\alpha,x^*}^{(n)}}{n} 
-\frac{\EE f\left(n^\alpha (x-x^*) \right)}{n}\right|\geq \eps\right)\leq 2
{\rm e}^{-\frac{\eps  n^{(1-\alpha)}}{6 \|f\|_\infty} }, 
\eeq
for  $n\in \bbN$ sufficiently large. Hence in particular,
\beq \label{eq:asconvergence3}
\lim_{n \rightarrow \infty} n^\alpha \left (\frac{1}{n} X_{f,\alpha,x^*}^{(n)}-\int_I f\left(n^\alpha (x-x^*)\right){\rm d}\nu_{\textrm{eq},\mu} (x) \right)=0,
\eeq
almost surely. 
\end{corollary}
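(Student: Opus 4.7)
The plan is to obtain Corollary \ref{cor:compactLLLN} as an immediate consequence of Theorem \ref{LLLN1}, applied to the constant sequence $\mu_n \equiv \mu$. Thus the task reduces to verifying, for this fixed $\mu$ and each $x^* \in I$, the three hypotheses (i)--(iii) of that theorem, and then reconciling the two ways of writing the limiting integral.

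First I would observe that hypothesis (i) is essentially the given setup: by assumption $\mu$ is absolutely continuous on a neighborhood of $I$ with continuous, nonvanishing weight $w$, so $\mu_{\textrm{sing}}(I)=0$ and in particular $w(x^*)>0$. For hypothesis (ii), note that $\wti{K}_n(x,x)/n = K_n(x,x) w(x)/n$, and since $\mu$ is regular with compact support and $w$ is continuous and positive on $I$, Proposition \ref{prop:totik} yields the uniform convergence
\[
\frac{\wti{K}_n(x,x)}{n} \longrightarrow \rho(x), \qquad \rho(x) = \frac{d \nu_{\textrm{eq};\mu}}{dx}(x),
\]
on $I$. As discussed in Section \ref{Reg}, $\rho$ is real analytic (and in particular continuous) on the interior of $\supp(\mu)$, where it is strictly positive; since $I$ lies in that interior, $\rho$ is continuous on $I$ and $\rho(x^*)>0$, which is what (ii) demands. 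Hypothesis (iii) is precisely the content of Proposition \ref{prop:compactuniversal} applied under the same assumptions on $\mu$ and $I$.

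Having verified (i)--(iii), Theorem \ref{LLLN1} delivers the concentration inequality \eqref{eq:LCI}, which is literally \eqref{eq:LCI3}, and the almost sure vanishing
\[
\lim_{n\to\infty} n^\alpha\left( \frac{1}{n} X^{(n)}_{f,\alpha,x^*} - \int_I f\bigl(n^\alpha(x-x^*)\bigr)\, \rho(x)\, dx \right) = 0.
\]
The final identity \eqref{eq:asconvergence3} then follows from the equality $d\nu_{\textrm{eq};\mu}(x) = \rho(x)\, dx$ on $I$ (in fact on the whole interior of $\supp(\mu)$), which allows the integral against $\rho(x)\,dx$ to be rewritten as an integral against $d\nu_{\textrm{eq},\mu}$; for $n$ large enough the support of $f(n^\alpha(\cdot -x^*))$ lies inside $I$, so no contribution is missed.

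There is really no obstacle here beyond bookkeeping: all the analytic work has already been done in Proposition \ref{prop:totik} (convergence of the mean density), Proposition \ref{prop:compactuniversal} (sine-kernel universality), and Theorem \ref{LLLN1} (the abstract local law of large numbers from $\alpha$-Nevai and the concentration inequality). The only subtlety worth a sentence of care is the positivity and continuity of $\rho$ on $I$, which is handled by invoking the real analyticity of the equilibrium density in the interior of $\supp(\mu)$ discussed in Section \ref{Reg}.
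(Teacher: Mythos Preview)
Your proposal is correct and follows exactly the route the paper takes: the paper's entire proof is the sentence ``Using Propositions \ref{prop:totik} and \ref{prop:compactuniversal} we immediately get'' the corollary, i.e.\ verify hypotheses (i)--(iii) of Theorem \ref{LLLN1} via those two propositions and apply it with $\mu_n\equiv\mu$. Your write-up simply unpacks this in more detail, including the check that $\rho(x^*)>0$ and the identification $\rho(x)\,dx = d\nu_{\textrm{eq};\mu}$ on $I$.
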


Using Propositions \ref{prop:totik1} and \ref{prop:varyinguniversal} we immediately get

\begin{corollary} \label{cor:varyingLLLN}
Let $\Sigma \subseteq \bbR$ be a finite union of intervals and suppose that $w(x)=e^{-Q(x)}$ is a continuous function on $\Sigma$ satisfying $\lim_{|x| \rightarrow \infty }|x| e^{-Q(x)} =0$ if $\Sigma$ is not compact. Let $\nu_Q$ be the unique minimizer of the functional
\beq \no
I_Q (\nu)=\iint \log\frac{1}{|x-y|} {\rm d}\nu(x){\rm d}\nu(y)+2\int Q(x) {\rm d}\nu(x)
\eeq
$($minimized over all probability measures with support in $\Sigma)$. Let $\dmu_n=w^{2n}(x){\rm d}x$ and let $\wti{K}_n(x,y)=w^{n}(x)w^n(y)K_n(x,y)$ be the corresponding Christoffel-Darboux kernel. 

Assume that on a neighborhood of an interval, $J$, lying in the interior of $\supp(\nu_Q)$, $\nu_Q$ is absolutely continuous and has a continuous and positive density, $\rho_Q(x)$. Assume also that $Q'$ is continuous on a neighborhood of $J$ as well. 

Then for any compactly supported, bounded function, $f$, with a finite number of points of discontinuity, and any $\eps>0$ 
\beq \label{eq:LCI3a}
\mathbb P\left( n^{\alpha}  \left|\frac{X_{f,\alpha,x^*}^{(n)}}{n} 
-\frac{\EE f\left(n^\alpha (x-x^*) \right)}{n}\right|\geq \eps\right)\leq 2
{\rm e}^{-\frac{\eps  n^{(1-\alpha)}}{6 \|f\|_\infty} }, 
\eeq
for  $n\in \bbN$ sufficiently large. Hence in particular,
\beq \label{eq:asconvergence3a}
\lim_{n \rightarrow \infty} n^\alpha \left (\frac{1}{n} X_{f,\alpha,x^*}^{(n)}-\int_I f\left(n^\alpha (x-x^*)\right){\rm d}\nu_{Q} (x) \right)=0,
\eeq
almost surely. 
\end{corollary}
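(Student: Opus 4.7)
The proof plan is to recognize this corollary as a direct specialization of Theorem \ref{LLLN1} to the varying-measure setting $d\mu_n(x) = w^{2n}(x)\,dx$, and then simply to verify that the three hypotheses of that theorem hold in the present setting by invoking the two Totik-type propositions quoted at the top. Since Theorem \ref{LLLN1} is already established, the main task is essentially a plug-and-check; there is no genuine obstacle.

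First I would fix $x^* \in J$ (the statement makes sense only under this tacit assumption) and choose a closed interval $I$ containing $x^*$ in its interior with $I \subseteq J_0$, where $J_0$ is an open neighborhood of $J$ on which Propositions \ref{prop:totik1} and \ref{prop:varyinguniversal} apply (i.e.\ on which $\nu_Q$ has continuous positive density $\rho_Q$, $Q'$ is continuous, etc.). With this choice, I would verify each hypothesis of Theorem \ref{LLLN1} in turn. Hypothesis (i) is immediate: the measures $d\mu_n = w^{2n}(x)\,dx$ are absolutely continuous everywhere, so $\mu_{n,\mathrm{sing}}(I) = 0$, and $w_n(x^*) = e^{-2nQ(x^*)} > 0$ since $Q$ is real-valued. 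Hypothesis (ii), namely uniform convergence of $\widetilde{K}_n(x,x)/n$ to a continuous positive $\rho$ on $I$, is precisely the conclusion \eqref{eq:varyingconv} of Proposition \ref{prop:totik1}, with $\rho = \rho_Q$ (which is continuous and positive on a neighborhood of $I \subseteq J_0$). Hypothesis (iii), the uniform sine-kernel universality, is precisely the conclusion \eqref{eq:lubconv2} of Proposition \ref{prop:varyinguniversal}.

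Having verified the hypotheses, I would invoke Theorem \ref{LLLN1}. Its concentration inequality \eqref{eq:LCI} yields \eqref{eq:LCI3a} of the corollary verbatim, and its almost-sure conclusion \eqref{eq:asconvergence1} yields
\[
\lim_{n \to \infty} n^\alpha \left(\frac{1}{n} X_{f,\alpha,x^*}^{(n)} - \int_I f\!\left(n^\alpha(x - x^*)\right) \rho_Q(x)\, dx\right) = 0
\]
almost surely. To convert this into \eqref{eq:asconvergence3a}, I would use the fact that on a neighborhood of $J$ (and hence on $I$) the measure $\nu_Q$ is absolutely continuous with density $\rho_Q$, so $\rho_Q(x)\,dx = d\nu_Q(x)$ on $I$, and the two integrals agree. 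The support condition on $f$ ensures that for all sufficiently large $n$ the integrand in $\int f\bigl(n^\alpha(x-x^*)\bigr) d\nu_Q(x)$ is supported inside $I$, so no contribution from outside is lost.

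The only mildly delicate point — and really the only place where one must pay attention rather than simply substitute — is the bookkeeping of neighborhoods: one must choose $I$ small enough so that the uniform conclusions of Propositions \ref{prop:totik1} and \ref{prop:varyinguniversal} apply on $I$, yet large enough (with $x^*$ interior to $I$) that for large $n$ the scaled arguments $x^* + s/n^\alpha$ stay inside $I$ whenever $s \in \operatorname{supp}(f)$. This is automatic from the compactness of $\operatorname{supp}(f)$ and the choice of $I$ as described above, so no further work is required.
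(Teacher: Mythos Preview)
Your proposal is correct and follows exactly the route taken in the paper: the corollary is obtained by applying Theorem \ref{LLLN1} after verifying its hypotheses via Propositions \ref{prop:totik1} and \ref{prop:varyinguniversal}. Your added bookkeeping about the choice of $I$ and the identification $\rho_Q(x)\,{\rm d}x = {\rm d}\nu_Q(x)$ on $I$ simply makes explicit what the paper leaves implicit.
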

\begin{remark}
It may seem that our treatment is restricted to absolutely continuous measures. However, that is not entirely true. If $\mu_n$ is a sequence of absolutely continuous measures that converges to a singular measure, $\mu$, such that $K_n(x,y;\mu_n) \equiv K_n(x,y;\mu)$, then Theorem \ref{LLLN1} shows that one might have a local concentration inequality and a local law of large numbers for the process defined by $K_n$ with respect to the measures $\mu_n$. Of course, for this to happen, the conditions of Theorem \ref{LLLN1} need to be met by $K_n$ and $\mu_n$. The family of measures described in \cite{Bjat} are an example where this happens. 

This does not yet mean that a local law of large numbers holds for the process defined by a singular $\mu$. We only get it for the sequence of processes defined by the sequence $\mu_n$ converging to $\mu$. This raises an interesting problem: If $\mu_1$ and $\mu_2$ have the same CD kernel, $K_n$, what is the relationship between the OPE defined by $\mu_1$ and $\mu_2$, at level $n$?
\end{remark}

\end{document}